\title
{Maximal clades in random binary search trees}
\date{27 August, 2014}
\author{Svante Janson}
\thanks{Partly supported by the Knut and Alice Wallenberg Foundation}
\address{Department of Mathematics, Uppsala University, PO Box 480,
SE-751~06 Uppsala, Sweden}
\email{svante.janson@math.uu.se}
\newcommand\urladdrx[1]{{\urladdr{\def~{{\tiny$\sim$}}#1}}}
\thanks{This research was started during the 
25th International Conference on Probabilistic, Combinatorial and Asymptotic
Methods for the Analysis of Algorithms, AofA 2014, 
in Paris, June 2014.
I thank Michael Fuchs for interesting discussions.
}
\subjclass[2010]{60C05; 05C05, 60F05}
\numberwithin{equation}{section}
\renewcommand\le{\leqslant}
\renewcommand\ge{\geqslant}
\newtheorem{theorem}{Theorem}[section]
\newtheorem{lemma}[theorem]{Lemma}
\theoremstyle{definition}
\newtheorem{remark}[theorem]{Remark}
\theoremstyle{remark}
\newenvironment{romenumerate}[1][0pt]{
\addtolength{\leftmargini}{#1}\begin{enumerate}
 }{\end{enumerate}}
\newcounter{oldenumi}
{\setcounter{oldenumi}{\value{enumi}}
\begin{romenumerate} \setcounter{enumi}{\value{oldenumi}}}
{\end{romenumerate}}
\newcounter{thmenumerate}
\newenvironment{thmenumerate}
{\setcounter{thmenumerate}{0}%
 \def\item{\par
 \refstepcounter{thmenumerate}\textup{(\roman{thmenumerate})\enspace}}
}
{}
\newcounter{romxenumerate}   
\newcounter{xenumerate}   
\newcommand\step[1]{\par{#1.}}
\newcounter{steps}
\newcommand\stepx{\smallskip\noindent\refstepcounter{steps}%
 \emph{Step \arabic{steps}. }}
\newcommand{\refT}[1]{Theorem~\ref{#1}}
\newcommand{\refL}[1]{Lemma~\ref{#1}}
\newcommand{\refR}[1]{Remark~\ref{#1}}
\newcommand{\refS}[1]{Section~\ref{#1}}
\xdef\klockan{\the\count1.0\the\count255}
\xdef\klockan{\the\count1.\the\count255}\fi
\newcommand{\sumji}{\sum_{j=1}^\infty}
\newcommand{\sumni}{\sum_{n=1}^\infty}
\newcommand{\sumki}{\sum_{k=1}^\infty}
\newcommand{\sumkix}[1]{\sum_{k=1}^{#1}}
\newcommand{\sumkn}{\sum_{k=1}^n}
\newcommand{\sumkN}{\sum_{k=1}^N}
\newcommand{\sumkni}{\sum_{k=1}^{n-1}}
\newcommand{\prodim}{\prod_{i=1}^m}
\newcommand\set[1]{\ensuremath{\{#1\}}}
\newcommand\xpar[1]{(#1)}
\newcommand\bigpar[1]{\bigl(#1\bigr)}
\newcommand\Bigpar[1]{\Bigl(#1\Bigr)}
\newcommand\biggpar[1]{\biggl(#1\biggr)}
\newcommand\lrpar[1]{\left(#1\right)}
\newcommand\xcpar[1]{\{#1\}}
\newcommand\abs[1]{|#1|}
\newcommand\bigabs[1]{\bigl|#1\bigr|}
\newcommand\Bigabs[1]{\Bigl|#1\Bigr|}
\def\rompar(#1){\textup(#1\textup)}    
\newcommand\xfrac[2]{#1/#2}
\newcommand\parfrac[2]{\lrpar{\frac{#1}{#2}}}
\newcommand\Bigparfrac[2]{\Bigpar{\frac{#1}{#2}}}
\def\xexp(#1){e^{#1}}
\newcommand\floor[1]{\lfloor#1\rfloor}
\newcommand\ntoo{\ensuremath{{n\to\infty}}}
\newcommand\ktoo{\ensuremath{{k\to\infty}}}
\newcommand\norm[1]{\|#1\|}
\newcommand\normp[1]{\|#1\|_p}
\newcommand\bignorm[1]{\bigl\|#1\bigr\|}
\newcommand\punkt{.\spacefactor=1000}    
\newcommand\ie{i.e\punkt}
\newcommand\eg{e.g\punkt}
\newcommand\cf{cf\punkt}
\newcommand{\tend}{\longrightarrow}
\newcommand\dto{\overset{\mathrm{d}}{\tend}}
\newcommand\pto{\overset{\mathrm{p}}{\tend}}
\newcommand\eqd{\overset{\mathrm{d}}{=}}
\newcommand\op{o_{\mathrm p}}
\newcounter{CC}
\newcommand{\CC}{\stepcounter{CC}\CCx} 
\newcommand{\CCx}{C_{\arabic{CC}}}     
\newcommand{\CCname}[1]{\CC\xdef#1{\CCx}} 
\newcounter{cc}
\newcommand\E{\operatorname{\mathbb E{}}}
\newcommand\PP{\operatorname{\mathbb P{}}}
\newcommand\Var{\operatorname{Var}}
\newcommand\Exp{\operatorname{Exp}}
\newcommand\rise[1]{^{\overline{#1}}}
\newcommand\ga{\alpha}
\newcommand\gd{\delta}
\newcommand\gD{\Delta}
\newcommand\gf{\varphi}
\newcommand\gam{\gamma}
\newcommand\gl{\lambda}
\newcommand\gs{\sigma}
\newcommand\gss{\sigma^2}
\newcommand\eps{\varepsilon}
\renewcommand\phi{\xxx}  
\newcommand\cF{\mathcal F}
\newcommand\cT{{\mathcal T}}
\newcommand\ett[1]{\boldsymbol1\xcpar{#1}}
\newcommand\qw{^{-1}}
\newcommand\qww{^{-2}}
\newcommand\qq{^{1/2}}
\newcommand\qqc{^{3/2}}
\newcommand\intoi{\int_0^1}
\newcommand\oi{[0,1]}
\newcommand\dd{\,\mathrm{d}}
\newcommand\rhs{right-hand side}
\newcommand\sL{{\mathsf L}}
\newcommand\sR{{\mathsf R}}
\newcommand\tl{T_\sL}
\newcommand\tr{T_\sR}
\newcommand\TT{\bar \cT}
\newcommand\ttn{\TT_n}
\newcommand\ctn{\cT_n}
\newcommand\ctk{\cT_k}
\newcommand\ctx[1]{\cT_{#1}}
\newcommand\ctnx[1]{\ctx{n,#1}}
\newcommand\ctnv{\ctnx v}
\newcommand\ctnl{\cT_{n,\sL}}
\newcommand\ctnr{\cT_{n,\sR}}
\newcommand\ccT{\tilde\cT}
\newcommand\ctt{\ccT_t}
\newcommand\cttau{\ccT_\tau}
\newcommand\vvx[1]{v_1\dotsm v_{#1}}
\newcommand\vvk{\vvx{k}}
\newcommand\Too{T_\infty}
\newcommand\Voo{V_\infty}
\newcommand\Cl{C_\sL}
\newcommand\Cr{C_\sR}
\newcommand\ctgl{\cT^{(\gl)}}
\newcommand\Fii{{}_1F_1}
\newcommand\gamm{\gam^2}
\newcommand\bst{binary search tree}
\newcommand\rbst{random \bst}
\newcommand\dV{\partial V}
\newcommand\gff{\gf'}
\newcommand\pxx[1]{\frac{2}{(#1+1)(#1+2)}}
\newcommand\pkk{\pxx{k}}
\newcommand\sumvtn{\sum_{v\in \ctn}}
\newcommand\sumvv{\sum_{v\in V_m'}}
\newcommand\sumvdv{\sum_{v\in\dV_m'}}
\newcommand\qp{^{1/p}}
\newcommand\qpp{^{1-1/p}}
\newcommand\FF{F^{(p)}}
\newcommand{\Holder}{H\"older}
\begin{document}





\begin{abstract} 
We study maximal clades in random phylogenetic trees with the Yule--Harding
model or, equivalently, in binary search trees. We use probabilistic methods
to reprove and extend earlier results on moment asymptotics and asymptotic
normality. In particular, we give an explanation of the curious phenomenon
observed by Drmota, Fuchs and Lee (2014) that asymptotic normality holds, but
one should normalize using half the variance.
\end{abstract}

\maketitle

\section{Introduction}\label{S1}

Recall that there are two types of binary trees; we fix the notation as follows.
A \emph{full binary tree} is an rooted tree where each node has
either 0 or 2 children; in the latter case the two children are designated as
\emph{left child} and \emph{right child}.
A \emph{binary tree} is a  rooted tree where each node has
0, 1 or 2 children; moreover, each child is designated as either 
\emph{left  child} or \emph{right child}, and each node has at most one
child of each type. 
(Both versions can be regarded as ordered trees,  
with the left child before the right when there are two children.)
It is convenient to regard also the empty tree $\emptyset$
as a binary tree (but not as
a full binary tree). 
In a full binary tree, the leaves (nodes with no children) 
are called \emph{external nodes}; the other nodes (having 2 children) are
\emph{internal nodes}.
There is a simple, well-known bijection between full binary trees and binary
trees: Given a full binary tree, its internal nodes form a binary tree;
this is a bijection, with inverse given by adding, to
any given binary tree, external nodes as children at all free places.

Note that a full binary tree with $n$ internal nodes has $n+1$ external
nodes, and thus $2n+1$ nodes in total. In particular, the bijection just
described yields a bijection between the full binary trees with $2n+1$ nodes
and the binary trees with $n$ nodes.

If $T$ is a binary, or full binary, tree, we let $T_\sL$ and $T_\sR$
be the subtrees rooted at the left and right child of the root, with
$T_\sL=\emptyset$  [$T_\sR=\emptyset$] if the root has no left [right] child.

A \emph{phylogenetic tree} is the same as a  full binary tree.
In this context,  the \emph{clade} of an external node $v$ is defined 
to be the set of external nodes that are descendants of the parent of $v$.
(This is called a \emph{minimal clade} by \citet{BlumF} and \citet{ChangF10}.)
Note that two clades are either nested or disjoint; furthermore, each
external node belongs to some clade (for example its own).
Hence, the set of maximal
clades forms a partition of the set of external nodes.
We let $F(T)$ denote the number of maximal clades of a phylogenetic
tree $T$.  (Except that for technical reasons, see \refS{Sbin}, we define
$F(T)=0$ for a phylogenetic tree $T$ with only one external node. Obviously,
this does not affect asymptotics.)
The maximal clades, and the number of them, were introduced by \citet{DBF07},
together with a biological motivation, and further studied by \citet{DFL}.


The phylogenetic trees that we consider are random; more precisely, we 
consider the Yule--Harding model of a random phylogenetic tree $\TT_{n}$
with a given number $n$ internal, and thus $n+1$ external, nodes. 
These can be defined recursively, with $\TT_0$ the
unique phylogenetic tree with 1 node (the root), and $\TT_{n+1}$ obtained
from $\TT_n$ ($n\ge0$)
by choosing an external node uniformly at random and converting
it to an internal node with two external children.
(Alternatively, we obtain the same random model by constructing the tree
bottom-up by Kingman's coalescent \cite{Kingman}, 
see further \citet{Aldous-cladograms}, \citet{BlumF} and \citet{ChangF10}.) 
Recall that, for any $n\ge1$, the number of internal nodes in the 
left subtree $\TT_{n,\sL}$ (or the right subtree $\TT_{n,\sR}$) is uniformly
distributed on $\set{0,\dots,n-1}$, 
and that conditioned on this number being $m$, $\TT_{n,\sL}$ has the same
distribution as $\TT_m$;
see also \refR{Rsplit}.

Under the bijection above, the Yule--Harding random tree $\TT_n$ corresponds
to the random \emph{binary search tree} $\cT_n$ with $n$ nodes, see \eg{}
\citet{SJ180} and \citet{Drmota}.

The random variable that we study is thus $X_n:=F(\ttn)$,
the number of maximal clades in the Yule--Harding model.
It was proved by \citet{DF10}
that the mean number of maximal clades 
$\E X_n\sim\ga n$,
where
\begin{equation}\label{ga}
  \ga = \frac{1-e\qww}4.
\end{equation}
This was reproved by \citet{DFL},
in a sharper form:
\begin{theorem}[\cite{DF10,DFL}]
  \label{Tmean}
  \begin{equation}\label{tmean}
\E X_n=\E F(\ctn)=\ga n+O(1),  
\end{equation}
where $\ga$ is given by \eqref{ga}.
\end{theorem}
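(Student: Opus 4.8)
The plan is to translate $F$ into a counting property of the associated \bst{} $\ctn$, derive a linear recursion for $a_n:=\E F(\ctn)$, solve it with a generating function, and read off the asymptotics by elementary singularity analysis. Under the bijection between full binary trees and binary trees recalled in \refS{S1}, every clade of $\ttn$ equals the set of external descendants of some internal node $u$ having at least one external child (the clade of an external node $v$ is the set of external descendants of its parent $u$), and such a clade is maximal exactly when no proper ancestor of $u$ has an external child. Since an internal node of $\ttn$ has an external child iff the corresponding node of $\ctn$ has at most one child, and the ancestor relation is preserved by the bijection, $F(\ctn)$ is the number of nodes $u$ of $\ctn$ such that $u$ has at most one child and every proper ancestor of $u$ has two children. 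Inspecting the root of $\ctn$ now yields, for $n\ge1$,
\begin{equation*}
  F(\ctn)=
  \begin{cases}
    1,&\text{if the root of $\ctn$ has at most one child},\\[2pt]
    F(\ctnl)+F(\ctnr),&\text{if the root of $\ctn$ has two children},
  \end{cases}
\end{equation*}
with the convention $F(\emptyset)=0$ (consistent with $F(\ctx0)=0$ and with the stated convention for a single external node): when the root has a missing child it is the only node counted, and when it has two children the ancestor condition passes to each subtree.

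Now recall that $\ctnl\eqd\ctx m$ with $m$ uniform on $\set{0,\dots,n-1}$, that $\ctnr\eqd\ctx{n-1-m}$, and that the two are independent given $m$; the root of $\ctn$ has at most one child precisely when $m\in\set{0,n-1}$. Taking expectations gives $a_0=0$, $a_1=a_2=1$, and
\begin{equation*}
  a_n=\frac2n+\frac2n\sum_{m=1}^{n-2}a_m,\qquad n\ge2 .
\end{equation*}
Forming $n a_n-(n-1)a_{n-1}$ turns this into the second-order recursion $n a_n=(n-1)a_{n-1}+2a_{n-2}$ for $n\ge3$. A routine generating-function computation then shows that $A(z):=\sum_{n\ge0}a_nz^n$ satisfies $(1-z)A'(z)-2zA(z)=1+z$, whose solution with $A(0)=0$ is
\begin{equation*}
  A(z)=\frac{e^{-2z}}{(1-z)^2}\int_0^z e^{2t}(1-t^2)\dd t=:\frac{g(z)}{(1-z)^2},
\end{equation*}
where $g$ is entire. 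Hence the only singularity of $A$ is the double pole at $z=1$, and $g(z)=g(1)+g'(1)(z-1)+O((z-1)^2)$ gives $A(z)=g(1)(1-z)^{-2}-g'(1)(1-z)^{-1}+h(z)$ with $h$ entire, so $a_n=g(1)\,n+O(1)$ (indeed $a_n-g(1)n$ converges). Finally $\int_0^1 e^{2t}(1-t^2)\dd t=(e^2-1)/4$, so $g(1)=e^{-2}(e^2-1)/4=(1-e^{-2})/4=\ga$, which is \eqref{tmean}.

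The analytic part — the differential equation, its solution, and reading off coefficients (no Flajolet--Odlyzko transfer is even needed here, since $g$ is entire) — is routine. The one genuinely delicate point is the combinatorial recursion above: $F$ is \emph{not} an additive functional with a deterministic toll, so one must argue carefully that a missing child at the root forces exactly one maximal clade, and check the small cases $n\le2$ together with the $F=0$ convention so that the recursion and the initial data are precisely right.
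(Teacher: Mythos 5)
Your proof is correct, but it takes a genuinely different route from the paper's: you essentially reconstruct the generating-function/singularity-analysis argument of \cite{DF10,DFL}, which this paper explicitly cites as the \emph{shorter} route and deliberately sets aside in favor of a probabilistic one. Concretely, you derive the root recursion for $\E F(\ctn)$ directly (your $a_n=\frac2n+\frac2n\sum_{m=1}^{n-2}a_m$ for $n\ge2$), telescope it to $na_n=(n-1)a_{n-1}+2a_{n-2}$, and solve the resulting first-order ODE $(1-z)A'-2zA=1+z$; since the numerator $g(z)=e^{-2z}\int_0^z e^{2t}(1-t^2)\dd t$ is entire, the double pole of $A$ at $z=1$ gives $a_n=g(1)n+O(1)$, and the elementary integral $\int_0^1 e^{2t}(1-t^2)\dd t=(e^2-1)/4$ yields $g(1)=(1-e^{-2})/4=\ga$. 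The paper instead writes $F(T)=\sum_{v\in T}f(T_v)$ with the toll function $f$ of \eqref{f}, invokes the general fringe-tree result \cite[Theorem 3.4/3.8]{SJ296} to get $\E F(\ctn)=\ga n+O(1)$ with $\ga=\E f(\cT)$ for the limiting random binary tree $\cT$ (Lemma~\ref{LEF}), and then evaluates $\E f(\cT)=(1-e^{-2})/4$ by a separate probabilistic computation (Lemma~\ref{Lga}): an inclusion--exclusion over ``green chains'' combined with the continuous-time branching-process construction of $\cT$. The paper even remarks that this probabilistic proof of Lemma~\ref{Lga} is longer than the analytic one you give. What your approach buys is brevity and self-containedness; what the paper's approach buys is the structural insight it is after (the additive-functional framework feeding the variance and CLT arguments), plus the generalization to the $\gl$-parametrized tree $\ctgl$ in the remark at the end of Section~\ref{Sga}, which falls out of the branching-process calculation for free. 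Your recursion and initial conditions check out ($a_0=0$, $a_1=a_2=1$, $a_3=4/3$), and the observation that $g$ entire means $a_n-g(1)n$ actually converges is correct and slightly sharper than the stated $O(1)$.
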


Moreover, \citet{DFL} found also corresponding
results for the variance and higher central moments:
\begin{theorem}[\cite{DFL}]
\label{Tmom}  
As \ntoo,
\begin{align}\label{Xvar}
  \E\xpar{X_n-\E X_n}^2 
&\sim 	4\ga^2 n\log n, &&  
\intertext{and for any fixed integer $k\ge3$,}
  \E\xpar{X_n-\E X_n}^k &\sim (-1)^k\frac{2k}{k-2}\ga^k n^{k-1}.
\label{Xmom}
\end{align}
\end{theorem}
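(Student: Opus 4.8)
The plan is to exploit the recursive structure of the Yule--Harding model directly. The starting point is the combinatorial recursion for $F$: for a phylogenetic tree $T$ with at least two external nodes, if $\tl$ or $\tr$ is a single external node then that node is a child of the root and its clade is the whole external vertex set, so $F(T)=1$; otherwise no external node is a child of the root, the maximal clades of $T$ are precisely those of $\tl$ together with those of $\tr$, and $F(T)=F(\tl)+F(\tr)$. Combined with the splitting property of $\ttn$ recalled in \refS{S1} --- the left subtree has $m$ internal nodes, $m$ uniform on $\set{0,\dots,n-1}$, and conditionally on $m$ the two subtrees are independent copies of $\TT_m$ and $\TT_{n-1-m}$ --- this gives, for $n\ge2$, with $m\sim\mathrm{Unif}\set{0,\dots,n-1}$ and $X',X''$ independent copies of the sequence $(X_j)_j$ independent of $m$,
\[
  X_n \eqd \begin{cases}
    1, & m\in\set{0,n-1},\\
    X'_m+X''_{n-1-m}, & 1\le m\le n-2 .
  \end{cases}
\]
In particular $X_n=1$ precisely on the probability-$\tfrac2n$ event that a subtree of the root is empty.

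Next I would set up the moment recursion. Put $\mu_n:=\E X_n$, $\nu_{n,k}:=\E(X_n-\mu_n)^k$ and $d_{n,m}:=\mu_m+\mu_{n-1-m}-\mu_n$. Conditioning on $m$, using independence, and expanding $\bigl((X'_m-\mu_m)+(X''_{n-1-m}-\mu_{n-1-m})+d_{n,m}\bigr)^k$ by the multinomial theorem (with $\nu_{j,0}=1$, $\nu_{j,1}=0$) yields the \emph{exact} identity, for all $n\ge2$ and $k\ge1$,
\[
  \nu_{n,k} = \frac2n(1-\mu_n)^k
  + \frac1n\sum_{m=1}^{n-2}\;\sum_{a+b+c=k}\frac{k!}{a!\,b!\,c!}\,
    \nu_{m,a}\,\nu_{n-1-m,b}\,d_{n,m}^{c}.
\]
By \refT{Tmean}, $\mu_j=\ga j+O(1)$, so $d_{n,m}=O(1)$ uniformly and $\tfrac2n(1-\mu_n)^k = 2(-1)^k\ga^k n^{k-1}+O(n^{k-2})$. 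Isolating the terms with $a=k$ and with $b=k$, whose total over $m$ equals $2\sum_{m=1}^{n-2}\nu_{m,k}$, turns this into a divide-and-conquer recursion $\nu_{n,k}=\tfrac2n\sum_{m=1}^{n-2}\nu_{m,k}+t_{n,k}$ with toll
\[
  t_{n,k}=\frac2n(1-\mu_n)^k+\frac1n\sum_{m=1}^{n-2}R_{n,m,k},\qquad
  R_{n,m,k}:=\sum_{\substack{a+b+c=k\\ a<k,\ b<k}}\frac{k!}{a!\,b!\,c!}\,\nu_{m,a}\,\nu_{n-1-m,b}\,d_{n,m}^{c}.
\]
Then I would invoke the elementary fact (proved by telescoping $S_n:=\sum_{m\le n}\nu_{m,k}$, which satisfies $S_n=\tfrac{n+2}{n}S_{n-1}+t_{n,k}$ up to a lower-order perturbation, so $S_n\sim(n+1)(n+2)\sum_{j\le n}t_{j,k}/\bigl((j+1)(j+2)\bigr)$ and $\nu_{n,k}\sim2n\sum_{j\le n}t_{j,k}/j^2$) that $t_{n,k}\sim\beta n^{p}$ with $p>1$ forces $\nu_{n,k}\sim\tfrac{p+1}{p-1}\beta n^{p}$, while $t_{n,k}\sim\beta n$ forces $\nu_{n,k}\sim2\beta n\log n$.

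With these tools the theorem follows by induction on $k$. For $k=2$ one has $R_{n,m,2}=d_{n,m}^2=O(1)$, so $t_{n,2}=2\ga^2n+O(1)$, and the divide-and-conquer estimate gives $\Var X_n=\nu_{n,2}\sim4\ga^2n\log n$, which is \eqref{Xvar}; in particular $\nu_{n,2}=O(n\log n)$. For the inductive step, fix $k\ge3$ and assume $\nu_{j,a}=O(j^{a-1})$ for $3\le a\le k-1$ and $\nu_{j,2}=O(j\log j)$ (recall $\nu_{j,1}=0$, $\nu_{j,0}=1$). In $R_{n,m,k}$ each term with $a=1$ or $b=1$ vanishes; for the finitely many remaining triples the hypothesis together with $d_{n,m}=O(1)$ bounds $\nu_{m,a}\nu_{n-1-m,b}d_{n,m}^{c}$ by $O\bigl(n^{k-2}\log^2 n\bigr)$ uniformly in $1\le m\le n-2$ (because $a+b\le k$), so $\tfrac1n\sum_{m=1}^{n-2}R_{n,m,k}=O\bigl(n^{k-2}\log^2n\bigr)=o(n^{k-1})$. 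Hence $t_{n,k}=2(-1)^k\ga^k n^{k-1}+o(n^{k-1})$, and the divide-and-conquer estimate with $p=k-1$ and $\beta=2(-1)^k\ga^k$ gives $\nu_{n,k}\sim(-1)^k\tfrac{2k}{k-2}\ga^k n^{k-1}$, which is \eqref{Xmom} and in particular yields $\nu_{n,k}=O(n^{k-1})$, closing the induction.

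The step that will require genuine care is the uniform estimate of the cross terms $R_{n,m,k}$: one has to confirm that no mixed product $\nu_{m,a}\nu_{n-1-m,b}d_{n,m}^{c}$ with $a,b<k$ --- in particular those with one of $a,b$ equal to $k-1$ --- can reach order $n^{k-1}$ after averaging over $m$, which is precisely what leaves $\tfrac2n(1-\mu_n)^k\asymp n^{k-1}$ as the sole source of the leading term; everything else is routine. Conceptually, the mechanism is that the $k$-th central moment of $X_n$ is governed, for every $k\ge2$, by the probability-$\tfrac2n$ event $\set{X_n=1}$ (the root of $\ctn$ being an extreme key), on which $X_n-\mu_n\sim-\ga n$, amplified through the recursion by the factor $k/(k-2)$ when $k\ge3$ and by $\sim2\log n$ when $k=2$ (the extra logarithm arising because this contribution sits exactly at the critical growth rate $n$ of the recursion); this same rare-event picture is what lies behind the half-variance normalisation in the central limit theorem of \citet{DFL}.
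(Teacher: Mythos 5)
Your proposal takes a genuinely different route from the paper's. The paper works in the additive-functional/fringe-tree framework: it decomposes the toll as $f=g+h$ with $g$ depending only on subtree sizes, computes $\Var G(\ctn)$ via an explicit fringe-tree identity (\refT{TVG}), shows $H$ is negligible (\refL{LG}), and for higher moments first establishes the uniform bound $\E|X_n-\nu_n|^p=O(n^{p-1})$ for all real $p>2$ by Rosenthal's inequality and conditioning on the first $m$ generations (\refL{Lxp}), then isolates the contribution of large green subtrees near the root to reduce the moment to $\E\sum_{v}|f(\ctnv)|^p$ (\refL{Lbb}). That machinery simultaneously yields the absolute-moment result \refT{Tp} and feeds into the martingale CLT. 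Your proof is a self-contained moment-pumping argument: condition on the root split, expand $(X_n-\E X_n)^k$ by the multinomial theorem, peel off the diagonal triples $a=k$ and $b=k$ to expose the linear recursion $\nu_{n,k}=\frac{2}{n}\sum_m\nu_{m,k}+t_{n,k}$, and apply the standard transfer theorem. This is elementary, avoids martingales and Rosenthal entirely, and is closer in spirit to the analytic method of Drmota, Fuchs and Lee (your exact recursion is the combinatorial content of their functional equation for the moment generating function). What it buys is simplicity and a transparent origin of the constant $\tfrac{k}{k-2}$; what it gives up is \refT{Tp} for non-integer $p$ and the CLT.

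Two technicalities deserve attention. First, your recursion sums $m$ only from $1$ to $n-2$, so the exact solved form (\eqref{jw} in the paper) applies only after absorbing the missing boundary term $\frac{2}{n}\nu_{n-1,k}$ into the toll; a priori $|\nu_{n-1,k}|\le n^k$ makes this $O(n^{k-1})$, the same order as the leading term, so you need a separate a priori bound $\nu_{n,k}=O(n^{k-1})$ before the transfer argument closes cleanly. A standard bootstrap (the same induction, crude constants first) handles it, but it is a genuine extra step, which is exactly what the paper's \refL{Lxp} supplies in its framework. Second, the derivation in your parenthetical passes from an asymptotic for $S_n=\sum_{m\le n}\nu_{m,k}$ to one for $\nu_{n,k}$ by differencing, which is not valid for asymptotics in general; the correct move is the exact telescoped solution $\nu_{n,k}=2(n+1)\sum_{j<n}t_{j,k}/\bigl((j+1)(j+2)\bigr)+t_{n,k}$, from which the $p>1$ and $p=1$ cases both follow by partial summation. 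With those two points patched the argument is correct: the multinomial identity is exact, the cross terms $R_{n,m,k}$ are $O(n^{k-2}\log^2 n)$ uniformly in $m$ once the induction carries $\nu_{j,2}=O(j\log j)$ and $\nu_{j,a}=O(j^{a-1})$ for $3\le a<k$, and the constant $\frac{p+1}{p-1}$ at $p=k-1$ is $\frac{k}{k-2}$, matching \eqref{Xmom}.
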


As a consequence of \eqref{Xvar}--\eqref{Xmom}, the limit distribution of
$F(\ttn)$ (after centering and normalization) cannot be found by the method
of moments.  
Nevertheless,
\cite{DFL} further proved asymptotic normality,
where, unusually,  the normalizing uses (the square root of) \emph{half} the
variance:
\begin{theorem}[\cite{DFL}]  \label{TCLT}
As \ntoo,
\begin{equation}\label{tclt}
  \frac{X_n-\E X_n}{\sqrt{2\ga^2 n\log n}} \dto N(0,1).
\end{equation}
\end{theorem}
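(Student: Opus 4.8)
The plan is to set up a recursive decomposition of $X_n = F(\ttn)$ using the standard splitting of a random binary search tree (equivalently, the Yule--Harding tree), and then to identify the dominant contribution to $X_n - \E X_n$ as a sum of many asymptotically independent pieces, each of order $\sqrt{n\log n}$ in aggregate, governed by a central limit theorem of Lindeberg type. Concretely, writing $I_n$ for the (uniform on $\set{0,\dots,n-1}$) size of the left subtree $\ttn_{,\sL}$, we have the distributional recursion
\begin{equation*}
  X_n \eqd X'_{I_n} + X''_{n-1-I_n} + \xi_n,
\end{equation*}
where $X'$, $X''$ are independent copies of the sequence $(X_m)$, independent of $I_n$, and $\xi_n$ is a small ``toll'' term recording any maximal clade that straddles the root or is created/destroyed at the top level; one should check that $\xi_n$ is bounded (indeed $|\xi_n|\le C$), since only the clade of the root's external descendants near the top is affected. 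Iterating this recursion down the tree expresses $X_n - \E X_n$ as a sum over all nodes $v$ of the tree of toll contributions $\xi^{(v)}$, each attached to a subtree of some size $N_v$, plus fluctuation terms. The key structural fact, already reflected in Theorems~\ref{Tmean} and~\ref{Tmom}, is that $\E\xi_n \to 4\ga$ but $\Var(\xi_n)$ stays bounded, so the ``martingale-like'' sum $\sum_v (\xi^{(v)} - \E[\xi^{(v)}\mid N_v])$ has variance $\asymp n$, \emph{not} $n\log n$; the extra $\log n$ in \eqref{Xvar} must therefore come from a genuinely heavier-tailed but \emph{centered} contribution.

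The heart of the argument, and what I expect to be the main obstacle, is pinning down where the $\log n$ comes from and why it only contributes \emph{half} of what a naive moment calculation predicts. The resolution I would pursue: the fluctuations of $X_n$ are dominated not by a sum of bounded independent tolls but by the sizes $I_n$ of the subtree splits along the spine, through the nonlinearity $m \mapsto \E X_m \sim \ga m$. Expanding, $X_n - \E X_n \approx \sum (\text{spine splits}) + \sum (\text{bounded tolls})$, and a term like $\ga\bigpar{I_n - \E I_n}$ at the top, iterated, produces a contribution whose variance accumulates logarithmically --- but the relevant random variables are \emph{signed} and the sign depends on whether a split falls to the left or right, so cross terms partially cancel, halving the effective variance. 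Made precise, one writes $X_n - \E X_n = S_n + o_{\mathrm p}(\sqrt{n\log n})$ where $S_n$ is an explicit centered sum, $\Var(S_n)\sim 2\ga^2 n\log n$ (exactly half of $\Var(X_n)$, the other half being carried by a lower-order-in-probability-but-not-in-$L^2$ remainder that is itself asymptotically negligible in distribution --- this is precisely the ``curious phenomenon'' to be explained), and $S_n$ is a sum of a growing number of asymptotically independent, uniformly asymptotically negligible summands.

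With $S_n$ so identified, the proof concludes by verifying a Lindeberg (or Lyapunov) condition for $S_n/\sqrt{2\ga^2 n\log n}$, using the bounded-toll structure to control third moments, and transferring the CLT back to $X_n$ via the $o_{\mathrm p}(\sqrt{n\log n})$ approximation and Slutsky's theorem. Two technical points deserve care along the way: (i) controlling the recursion uniformly --- one wants moment bounds $\E|X_n - \E X_n|^k = O\bigpar{(n\log n)^{k/2}} + O(n^{k-1})$ consistent with Theorem~\ref{Tmom}, obtained by induction on $n$ using the recursion and Rosenthal-type inequalities; and (ii) showing the ``heavy'' centered remainder, though of the same $L^2$ order as $S_n$, converges to $0$ in probability after normalization by $\sqrt{n\log n}$ --- this is where the factor $2$ versus $4$ genuinely lives, and where I expect to spend the most effort, likely via a second-moment estimate on the remainder restricted to the event that no subtree split is too unbalanced, combined with a separate bound on the unbalanced part.
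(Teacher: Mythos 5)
Your high-level outline — find a centered sum $S_n$ with $\Var S_n\sim 2\ga^2 n\log n$, approximate $X_n-\E X_n$ by $S_n$ modulo $\op(\sqrt{n\log n})$, verify a Lindeberg condition, transfer by Slutsky — matches the paper's strategy (the paper's $S_n$ is $G'(\ctn)-\E G'(\ctn)$, a martingale built by revealing subtree splits in breadth-first order). But two of the concrete claims on which your proposal rests are false, and they are not minor.

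First, the toll $\xi_n$ in the recursion $X_n\eqd X'_{I_n}+X''_{n-1-I_n}+\xi_n$ is exactly $f(\ctn)$ from \eqref{f}, and this is \emph{not} bounded: when one child subtree is empty, $f(T)=1-F(T_{\sR})$ (or symmetrically), which has magnitude $\Theta(|T|)$. The correct bounds are $\PP(f(\ctn)\neq0)\le 2/n$ and $|f(T)|\le|T|$, so $\E f(\ctn)=O(1)$ and $\Var f(\ctn)=O(n)$. (Also $\E\xi_n\to-2\ga$, not $4\ga$.) The unboundedness of the toll is the \emph{source} of all the interesting phenomena here: the $n\log n$ in \eqref{Xvar}, the $n^{k-1}$ in \eqref{Xmom}, and the half-variance in \eqref{tclt}. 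If $|\xi_n|\le C$ as you assert, the variance of $X_n$ would be $\Theta(n)$, higher central moments would be $O(n^{k/2})$, and there would be nothing curious to explain. With this wrong premise, your subsequent accounting of where the $\log n$ comes from (``spine-split nonlinearity'') and of the ``bounded-toll structure to control third moments'' does not go through.

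Second, your explanation of the factor $2$ versus $4$ (``the sign depends on whether a split falls to the left or right, so cross terms partially cancel, halving the effective variance'') is not what is happening. There is no left-right cancellation; the two sides enter symmetrically. The correct picture, made precise in \refT{Tsmall}, is a truncation dichotomy: split the toll at a size threshold $N\approx\sqrt n$ into a ``small'' part and a ``large'' part. Each part contributes $\sim2\ga^2 n\log n$ to the variance, but the large part $G''(\ctn)$ satisfies $\E|G''(\ctn)|=O(\sqrt n)=o(\sqrt{n\log n})$, so it is negligible in probability even though it is not negligible in $L^2$. You do gesture at ``a lower-order-in-probability-but-not-in-$L^2$ remainder,'' but you attribute this to a cancellation mechanism that does not exist, and you never identify the remainder as the rare, $\Theta(n)$-sized clades. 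Without the size-truncation step, your ``second-moment estimate on the remainder restricted to the event that no subtree split is too unbalanced'' has no concrete target, and the proof does not close.

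To repair the proposal along the paper's lines: abandon the bounded-toll claim; decompose $f=g+h$ with $g$ depending only on subtree sizes (so a martingale in the split $\gs$-fields can be formed) and $h$ conditionally centered with $\Var H(\ctn)=O(n)$; then truncate $g$ at size $N=\sqrt n$, show $\Var G'(\ctn)\sim2\ga^2 n\log n$ via $\psi'_k\sim2\ga^2 k$ (which comes precisely from the $\Theta(1/k)$-probability event that the toll is $\Theta(k)$), verify the bounded-increment martingale CLT for $G'$, and show $G''$ is $\op(\sqrt{n\log n})$ by its first moment.
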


Here and below, $\dto$ denotes convergence in distribution;
similarly, $\pto$ will denotes convergence in probability. 
Unspecified limits (including implicit ones such as $\sim$ and $o(1)$) will
be as \ntoo.
Furthermore, $Y_p=\op(a_n)$, for random variables $Y_n$ and
positive numbers $a_n$, means $Y_n/a_n\pto0$.
We let $C,C_1,C_2,\dots$ denote some unspecified positive constants.

The purpose of the present paper is to  
use probabilistic methods to reprove these theorems, together with some
further results;
we hope that this can give additional insight, 
and it might perhaps also suggest future
generalizations to other types of random trees.

In particular, we can explain the appearance of half the variance in
\refT{TCLT} as follows: 

Fix a sequence of numbers $N=N(n)$, and say that a clade is \emph{small} if
it has at most $N+1$ elements, and \emph{large} otherwise.
(We use $N+1$ in the definition only for later notational convenience; 
the subtree corresponding to a small clade has at most $N$ internat nodes.)
Let $X^N_n$ be the number of maximal small clades, \ie, the small clades
that are not contained in any other small clade. It turns out that a
suitable choice of $N$ is about $\sqrt n$; we give two versions in the next
theorem. 
\begin{theorem}\label{Tsmall}
  \begin{thmenumerate}
  \item \label{tsmall12}
Let $N:=\sqrt{n}$. 
Then $\Var(X^N_n)\sim 2\ga^2 n\log n$	and
\begin{equation}\label{tsmall}
  \frac{X^N_n-\E X^N_n}{\sqrt{\Var X_n^N}} \dto N(0,1).
\end{equation}
Furthermore,
$X_n-X_n^N=\op\bigpar{\sqrt{\Var X_n^N}}$ and
$\E X_n-\E X_n^N=o\bigpar{\sqrt{\Var X_n^N}}$, so we may replace $X_n^N$
by $X_n$ in the numerator of \eqref{tsmall}.
However, 
\begin{equation}\label{tsmallar}
  \Var(X_n-X_n^N)\sim \Var(X_n^N)\sim 2\ga^2n\log n.
\end{equation}
\item 
Let $\sqrt{n}\ll N \ll \sqrt{n\log n}$, for example $N:=n\log\log n$.
Then the conclusions of \ref{tsmall12} still hold; moreover,
$\PP(X_n\neq X_n^N)\to0$.
  \end{thmenumerate}
\end{theorem}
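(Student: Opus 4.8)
I will show that $X^N_n$ is, up to a negligible error, an additive functional of the fringe subtrees of $\ctn$ of size at most $N$, to which a conditional central limit theorem applies, while $X_n-X^N_n$ is produced by rare unbalanced splits high in $\ctn$. First I record, with $L:=|\ctnl|$ uniform on $\set{0,\dots,n-1}$, the recursion $F(\ctn)=F(\ctnl)+F(\ctnr)$ when $1\le L\le n-2$ and $F(\ctn)=1$ when the root has a single child ($L\in\set{0,n-1}$), together with the fact that for $n>N$ one has $X^N_n=X^N_{n,\sL}+X^N_{n,\sR}$ for \emph{every} $L$ --- a maximal small clade is never generated by a node whose subtree has more than $N$ nodes --- and $X^N_m=F(\cT_m)$ for $m\le N$. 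Iterating the latter, $X^N_n=\sum_{i=1}^{K_N}F(\cT^{(i)})$, where $\cT^{(1)},\dots,\cT^{(K_N)}$ are the maximal fringe subtrees of $\ctn$ of size $\le N$, which, conditioned on their sizes $m_1,\dots,m_{K_N}$, are independent \rbst s; subtracting the two recursions,
\begin{equation*}
  X_n-X^N_n=\sum_{w\in S_N}\bigpar{1-X^N(\cT_w)},
\end{equation*}
where $S_N$ is the (antichain) set of nodes $w$ with exactly one child, $|\cT_w|>N$, and every proper ancestor full, and $\cT_w$ is the subtree rooted at $w$.

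\textbf{Mean, variance and CLT for $X^N_n$.} Conditioning on the ``skeleton'' obtained by contracting each $\cT^{(i)}$ to a marked leaf, the $F(\cT^{(i)})$ become independent, with $\E\bigsqpar{X^N_n\mid\text{skeleton}}=\sum_i\mu(m_i)$ and $\Var\bigsqpar{X^N_n\mid\text{skeleton}}=\sum_i v(m_i)$, where $\mu(m):=\E F(\cT_m)=\ga m+O(1)$ and $v(m):=\Var F(\cT_m)\sim4\ga^2m\log m$ by \refT{Tmean} and \refT{Tmom}. Standard fringe estimates for a \rbst{} (the expected number of subtrees of size $m$ is $\Theta(n/m^2)$, so $\E K_N=O(n/N)$, $\sum_i m_i=n-\Op(n/N)$, and $\sum_i m_i\log(N/m_i)\le(N/e)K_N=O(n)$ since $x\log(N/x)\le N/e$) then give $\E X^N_n=\ga n+O(n/N)$ and, crucially,
\begin{equation*}
  \sum_i v(m_i)\sim4\ga^2(\log N)\sum_i m_i\sim2\ga^2n\log n,
\end{equation*}
since $m_i\le N$ forces $\log m_i\le\log N\sim\tfrac12\log n$ under the hypotheses $\sqrt n\le N\ll\sqrt{n\log n}$; a concentration bound promotes this to $\sum_i v(m_i)\pto2\ga^2n\log n$, while $\Var\bigpar{\sum_i\mu(m_i)}=O((n/N)^2)=o(n\log n)$, so $\Var X^N_n\sim2\ga^2n\log n$. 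For \eqref{tsmall} I apply the Lindeberg (or Berry--Esseen) theorem to $\sum_i\bigpar{F(\cT^{(i)})-\mu(m_i)}$ conditionally on the skeleton: as $|F(\cT^{(i)})-\mu(m_i)|\le m_i+O(1)\le N+O(1)=o(\sqrt{n\log n})$, the truncation never activates, so the conditional sum is asymptotically $N\bigpar{0,\sum_i v(m_i)}$; combined with the concentration of $\sum_i v(m_i)$ and the negligible fluctuation of $\sum_i\mu(m_i)$ this yields asymptotic normality. (It is essential that $F$ is summed over $K_N\to\infty$ pieces --- a single $F(\cT_m)$ is far from normal --- and that the pieces have size only $\le N$, which is exactly what halves the logarithm.)

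\textbf{The difference.} From $|X_n-X^N_n|\le\sum_{w\in S_N}(1+|\cT_w|)$, and since the expected number of nodes with subtree of size $m$ and a single child is $\Theta(n/m^3)$, we get $\E|S_N|=O(n/N^2)$ and $\E\sum_{w\in S_N}|\cT_w|=O(n/N)$. For $N=\sqrt n$ this gives $X_n-X^N_n=\Op(\sqrt n)=\op(\sqrt{n\log n})$ and $\E X_n-\E X^N_n=O(\sqrt n)=o(\sqrt{n\log n})$, so $X^N_n$ may be replaced by $X_n$ in \eqref{tsmall}; for part (ii), where $N\gg\sqrt n$, we have $\set{X_n\ne X^N_n}\subseteq\set{S_N\ne\emptyset}$ and $\PP(S_N\ne\emptyset)\le\E|S_N|=O(n/N^2)\to0$. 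For \eqref{tsmallar}, note that $X^N(\cT_w)$ has mean $\sim\ga|\cT_w|$ and fluctuates only at scale $\sqrt{|\cT_w|\log N}\ll|\cT_w|$, so $X_n-X^N_n=-\ga\sum_{w\in S_N}|\cT_w|+\op(\sqrt{n\log n})$ and $\Var(X_n-X^N_n)\sim\ga^2\Var\bigpar{\sum_{w\in S_N}|\cT_w|}$; here $\sum_{w\in S_N}|\cT_w|=\Op(\sqrt n)$ but has a heavy tail, and a second-moment computation --- dominated by the diagonal $\sum_{N<m\le n}m^2\cdot\Theta(n/m^3)=\Theta(n\log(n/N))$, the ``all proper ancestors full'' constraint having probability $1-o(1)$ uniformly for $m\ge N$ (a node with subtree $\ge m\ge\sqrt n$ fails to be full with probability $\le2/m$ and has only $O(\log n)$ ancestors) --- gives $\Var\bigpar{\sum_{w\in S_N}|\cT_w|}\sim2n\log n$, whence $\Var(X_n-X^N_n)\sim2\ga^2n\log n$ (consistently with \refT{Tmom}, as $\Var X_n\sim\Var X^N_n+\Var(X_n-X^N_n)$ to leading order).

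\textbf{Main obstacle.} The technical heart is the size analysis of the fringe decomposition at threshold $N$: establishing $\sum_i v(m_i)\pto2\ga^2n\log n$ with its concentration, and the matching constant $\Var\bigpar{\sum_{w\in S_N}|\cT_w|}\sim2n\log n$, which rest on sufficiently precise renewal/fringe estimates for random binary search trees (the empirical law of the $m_i$, negligibility of off-diagonal pair terms, near-certainty of ``all ancestors full'' for large subtrees). Given these, the conditional CLT, the bound $X_n-X^N_n=\op(\sqrt{n\log n})$, and part (ii) are routine.
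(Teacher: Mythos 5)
Your proposal is correct in outline and takes a genuinely different route from the paper, so a comparison is in order.

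The paper never explicitly decomposes $\ctn$ into fringe pieces. Instead it works entirely with the additive representation $F(T)=\sum_v f(T_v)$, splits the toll $f=g+h$ where $g$ depends only on the sizes $(|T_\sL|,|T_\sR|)$ and $h$ has conditional mean zero, then truncates $g=g'+g''$ at $|T|\le N$; it proves the CLT for $G'(\ctn)=\sum_v g'(\cT_{n,v})$ by a martingale argument, using the filtration $(\cF_j)$ generated by the subtree sizes in breadth-first order on the infinite tree $\Too$, and dispatches $H$, $H'$, $H''$ by the variance bound $O(n)$ from \cite[Theorem~3.9]{SJ296}. The identity $X_n^N=G'(\ctn)+H'(\ctn)$ and Lemma \ref{Llarge} then give \eqref{tsmall}--\eqref{tsmallar} with essentially no new estimates. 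Your approach is more concretely geometric: condition on the skeleton (nodes with $|\cT_{n,v}|>N$), so that $X_n^N$ is a sum of \emph{conditionally independent} contributions $F(\cT^{(i)})$ from the maximal small fringe trees, to which a classical Lindeberg/Berry--Esseen CLT applies; and represent $X_n-X_n^N$ via the antichain $S_N$ of topmost green nodes with large subtree. What you gain is transparency (one sees immediately that the variance halving is $\log m_i\le\log N\sim\tfrac12\log n$, and that the other half sits in the rare large pieces). What the paper gains is that the hardest of your technical steps --- the concentration $\sum_i v(m_i)\pto2\ga^2n\log n$, the negligibility of $\Var\bigl(\sum_i\mu(m_i)\bigr)$, the off-diagonal control and the ``all ancestors full'' estimate in $\Var\bigl(\sum_{w\in S_N}|\cT_w|\bigr)\sim2n\log n$ --- are absorbed by the ready-made machinery of \cite{SJ296} (Theorems 3.4, 3.9, Lemma 7.1) applied to size-only toll functions, and the $g/h$ split kills the conditional-mean fluctuation exactly rather than by a separate variance bound. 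You have honestly flagged these as the technical heart; they are all true but are genuinely as much work as the paper's whole proof.

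One small slip worth correcting: the bound $\Var\bigl(\sum_i\mu(m_i)\bigr)=O\bigl((n/N)^2\bigr)$ conflates the square of an expectation with a variance. Writing $\sum_i\mu(m_i)=\ga n-\ga\,|V_{\mathrm{skel}}|+O(K_N)$, what you actually need is a second-moment bound on the skeleton size; this is a fringe functional with a bounded size-only toll, and the general theory gives $\Var\bigl(|V_{\mathrm{skel}}|\bigr)=O(n)$, hence $\Var\bigl(\sum_i\mu(m_i)\bigr)=O(n)=o(n\log n)$, which is what the argument requires (for $N=\sqrt n$ the two bounds happen to agree, which is presumably why the slip was not caught). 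Similarly, your step ``$X_n-X_n^N=-\ga\sum_{w\in S_N}|\cT_w|+\op(\sqrt{n\log n})$'' suffices for the in-probability claims but not directly for \eqref{tsmallar}; for the variance you must bound $\E\bigl(\sum_{w\in S_N}(\ga|\cT_w|-X^N(\cT_w))\bigr)^2$, which works out to $O\bigl(\tfrac{n\log n}{N}\bigr)=o(n\log n)$ by summing $\E Z_{n,m}\cdot\Var X^N_m$ over $m>N$, but this extra computation should be made explicit.
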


The theorem thus shows that the large clades are rare, and do not contribute
to the asymptotic distribution; however, when they appear, the larges clades
give a large (actually negative) contribution to $X_n$, and as a result,
half the variance of $X_n$ comes from the large clades.
(When there is a large clade, there is less room for other clades, so $X_n$
tends to be smaller than usually. See also \eqref{ftv} and \eqref{f} below.)

For higher moments, the large clades play a similar, but even more extreme,
role. 
Note that (for $n\ge2$)
with probability $2/n$, 
the root of $\TT_n$ has one internal and
one external node, and then there is a clade consisting of all external nodes; 
this is obviously the unique maximal clade, and thus $X_n=1$. 
Since $\E X_n =\ga n+O(1)$ by
\refT{Tmean}, we thus have $X_n-\E X_n = -\ga n +O(1)$ with probability
$2/n$, and this single exceptional event gives a contribution
$\sim (-1)^k 2\ga^k n^{k-1}$ to $\E(X_n-\E X_n)^{k}$, which 
explains a fraction $(k-2)/k$ of the moment \eqref{Xmom}; 
in particular, this explains
why the moment is of order $n^{k-1}$.

We shall see later that, roughly speaking, the moment asymptotic
in \eqref{Xmom} is completely explained by extremely
large clades of size $\Theta(n)$,
which appear in the $O(1)$ first generations of the tree.

This will also lead to a version of \eqref{Xmom} for absolute central
moments:
\begin{theorem}
  \label{Tp}
For any fixed real $p>2$, as \ntoo,
  \begin{equation}\label{tp}
 \E\bigabs{X_n-\E X_n}^p \sim \frac{2p}{p-2}\ga^p n^{p-1}.
  \end{equation}
\end{theorem}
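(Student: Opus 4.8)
The plan is to reuse the mechanism already isolated for the ordinary moments \eqref{Xmom}: the $p$-th absolute central moment is, up to negligible terms, carried entirely by the "extremely large clade" events in the first $O(1)$ generations of $\TT_n$. Concretely, I would first record the decomposition of $X_n$ that must already be established in the paper, something like $X_n = \ga n + O(1) + R_n$, where $R_n$ is a (mostly negative) remainder; the key quantitative facts are that $\PP(R_n \le -c n) = \Theta(1/n)$ coming from the $O(1)$ small subtrees near the root, while on the complement $|X_n - \E X_n| = \op(n)$ with good tail control. The heuristic that gives the constant is exactly the one spelled out after \eqref{Xmom}: conditioning on the shape of the top of the tree, with probability $\sim 2/n$ one of the two root subtrees is a single external node and then $X_n - \E X_n = -\ga n + O(1)$; summing the analogous contributions over the finitely many relevant top-configurations reproduces the series $\frac{2p}{p-2}$ (for $p=k$ integer this is $\frac{2k}{k-2}$, matching \eqref{Xmom}).

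The steps, in order. (1) \emph{Lower bound.} Exhibit the explicit family of "bad" events $B_j$, $j\ge 1$, where the first $j$ splits are as unbalanced as possible (root subtree of size $0$ or $1$, then again, etc.), so that on $B_j$ one has $X_n - \E X_n = -(1-\text{(small fraction)})\ga n + O(1)$ with $\PP(B_j) \sim c_j/n$; then $\E|X_n-\E X_n|^p \ge \sum_j \PP(B_j)\bigpar{(1-\eps_j)\ga n}^p \bigpar{1+o(1)}$, and check that $\sum_j c_j (1-\eps_j)^p \to \frac{2p}{p-2}$ as the truncation is removed. This should follow the $\liminf$ half of the argument for \eqref{Xmom}. (2) \emph{Upper bound.} Split $\E|X_n-\E X_n|^p = \E\bigsqpar{|X_n-\E X_n|^p \ett{X_n > \gl n}} + \E\bigsqpar{|X_n-\E X_n|^p \ett{X_n \le \gl n}}$ for a suitable small $\gl$. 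On the first event $|X_n - \E X_n| \le \ga n(1+o(1))$ deterministically (since $X_n \ge 1$ and $X_n \le n+1$), so it contributes at most $\bigpar{\ga n}^p \PP(X_n \le \gl n)(1+o(1))$, and one identifies $\PP(X_n \le \gl n) = \frac{2}{p-2}\cdot\frac{1}{n}\cdot\frac{1}{\ga^{?}}(1+o(1))$... more honestly: one shows $n^{p-1}\E\bigsqpar{|X_n-\E X_n|^p\ett{X_n\le\gl n}} \to \frac{2p}{p-2}\ga^p$ by the same finite-generation analysis as in (1), now as an exact limit. On the complement $\{X_n > \gl n\}$, use that $X_n - \E X_n = \op(\sqrt{n\log n})$ there together with suitable $L^q$-bounds (or the tail bounds/recursive moment estimates that \refT{Tmom} rests on) to get $\E\bigsqpar{|X_n-\E X_n|^p\ett{X_n>\gl n}} = o(n^{p-1})$; uniform integrability of $\bigpar{|X_n-\E X_n|/\sqrt{n\log n}}^p$ restricted to that event is what is needed, and it should come from the variance estimate \eqref{Xvar} plus Rosenthal-type recursive bounds for the binary-search-tree recursion, exactly the toolkit used to prove \refT{Tmom}. (3) Combine (1) and (2): $\liminf \ge \frac{2p}{p-2}\ga^p$ and $\limsup \le \frac{2p}{p-2}\ga^p$ after dividing by $n^{p-1}$, giving \eqref{tp}.

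The main obstacle is step (2) on the event $\{X_n > \gl n\}$: for integer $p$ one has \eqref{Xmom} and a $(k-2)/k$-versus-$2/k$ split already done, but for non-integer $p$ I cannot use the method of moments, so I need a genuine moment/tail bound on $|X_n - \E X_n|$ that is $o(n^{p-1})$ after excising the finitely many bad top-configurations — i.e.\ I must show that "the rest of the tree" contributes central $p$-th moment of order $o(n^{p-1})$, say $O\bigpar{(n\log n)^{p/2}}$, uniformly. I expect this to require setting up a recursive inequality for $\E|X_n - \E X_n|^p$ (or for a truncated version) along the split $X_n = 1 + X_{M} + X_{n-1-M}'$ with $M$ uniform, in the spirit of the contraction/recursive method, and checking that the "toll" term and the root-split fluctuations are small enough; the bookkeeping to separate the $\Theta(1/n)$-probability $\Theta(n)$-sized contribution from the bulk is where the care is needed, but it is the same phenomenon already exploited for \refT{Tmom}, just carried out at the level of $|{\cdot}|^p$ rather than $({\cdot})^k$.
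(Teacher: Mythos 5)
Your heuristic is exactly the one the paper spells out before \refT{Tp}, and you correctly identify the two ingredients that are needed: an a priori $L^p$ bound of the right order, and an identification of the contribution from the rare large-clade events. However, the proposal as written has genuine gaps in both halves, and the paper organises the argument quite differently.

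The a priori estimate $\E|X_n-\E X_n|^p = O(n^{p-1})$ is the hardest step of the whole proof and cannot be waved away as ``Rosenthal-type recursive bounds\ldots the toolkit used to prove \refT{Tmom}.'' In fact \refT{Tmom} is proved \emph{after} this estimate in the paper. The paper's \refL{Lxp} establishes it by an explicit induction on $n$: decompose $F(\ctn)-\nu_n = Y + Z + O(2^m)$ over the first $m$ generations, apply Rosenthal to the conditionally centered sum $Z$, bound $\sum_{v\in\dV_m'}\E|\ctnv|^{p-1}$ by $2^m(1/p)^m n^{p-1}$, and close the induction by choosing $m$ so large that $(2/p)^m$ absorbs the Rosenthal constant. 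The gain from $p>2$ (so that $(2/p)^m\to 0$) is the key mechanism; without an argument of this shape there is nothing to bootstrap.

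For the exact constant, summing over discrete ``bad top-configurations'' $B_j$ would not cleanly yield $\frac{2p}{p-2}$. The constant comes from a continuous integral over clade sizes, and the paper extracts it not by enumerating configurations but by replacing the target by $\E\sum_{v\in\ctn}|f(\ctnv)|^p$, which the additive-functional formula computes directly (\refL{Lbb}):
\begin{equation*}
\E \sum_{v\in\ctn}|f(\ctnv)|^p = (n+1)\sum_{k=1}^{n-1}\frac{2}{(k+1)(k+2)}\E|f(\ctk)|^p + \E|f(\ctn)|^p \sim \Bigpar{\frac{4}{p-2}+2}\ga^p n^{p-1},
\end{equation*}
with the $\frac{1}{p-2}$ arising from $\sum_k k^{p-3}\sim n^{p-2}/(p-2)$. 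The approximation $\E\bigabs{\sum_v(f(\ctnv)-\E f(\ctnv))}^p \approx \E\sum_v|f(\ctnv)|^p$ is then justified by a \emph{double} truncation --- by generation $m$ \emph{and} by subtree size $\ge\gd n$ (the indicator $J_v$) --- together with \refL{Ljm} (two simultaneous large green nodes in $V_m'$ have probability $O(n^{-2})$), and one lets $m\to\infty$, $\gd\to 0$, $n\to\infty$ in that order. Your single split on $\{X_n\le\gl n\}$ versus $\{X_n>\gl n\}$ is too coarse: on the latter event $|X_n-\E X_n|$ can still be of order $n$ (the event only bounds $X_n$ from below, not the deviation), so the claim that this piece is $o(n^{p-1})$ via ``$\op(\sqrt{n\log n})$ plus uniform integrability'' would need to be argued from scratch, and that argument is essentially \refL{Lxp} plus the truncation scheme again. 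In short: the plan captures the correct intuition, but the two places you flag as ``where the care is needed'' are exactly where the real content of the proof lies, and filling them in converges to the paper's Lemmas \ref{Lxp}, \ref{Lbb}, \ref{Ljm} and the $m,\gd$-diagonal argument, which are organised around the toll-function representation rather than around explicit top-configurations.
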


In \refS{Sbin}, we transfer the problem from random phylogenetic trees to 
\rbst, which we shall use in the proofs.
The theorems above are proved in Sections \ref{Smean}--\ref{Sga}.

\section{Binary trees}\label{Sbin}

We find it technically convenient to work with binary trees instead of full
binary trees (phylogenetic trees), so we use the bijection in \refS{S1} to
define $F(T)$ also for binary trees $T$. (We use the same notation $F$; this
should not cause any confusion.)
With this translation, our problem is thus to study $X_n:=F(\ctn)$, where $\ctn$
is the binary search tree with $n$ nodes.

The clades in a phylogenetic
tree correspond to the internal nodes that have at least one external child,
\ie, the nodes in the corresponding binary tree that have outdegree at most
1.
We call such nodes \emph{green}. 
For a binary tree $T$, 
the number $F(T)$ is thus the number of \emph{maximal green nodes}, 
\ie, the number
of green nodes that have no green ancestor. (This holds also for the 
phylogenetic tree $T$ with a single node, and thus for the empty binary
tree, with our definition $F(T)=0$ in this case.)

It follows that,
for any binary tree $T$, 
\begin{equation}\label{FT}
  F(T):=
  \begin{cases}
1 & \text{if $T$ has a green root},
\\
F(\tl)+F(\tr) & \text{otherwise}.	
  \end{cases}
\end{equation}

Define, for a binary tree $T$,
\begin{equation}\label{f}
  f(T):= F(T)-F(\tl)-F(\tr)=
  \begin{cases}
	1-F(\tr), & \tl=\emptyset,T\neq\emptyset, \\
	1-F(\tl), & \tr=\emptyset,T\neq\emptyset, \\
0, & \text{otherwise}.
  \end{cases}
\end{equation}
Then $F(T)$ is given by the recursion
\begin{equation}
  F(T)=F(\tl)+F(\tr)+f(T),
\end{equation}
and thus
\begin{equation}\label{ftv}
  F(T)=\sum_{v\in T}f(T_v),
\end{equation}
where $T_v$ is the subtree rooted at $v$, consisting of $v$ and all its
descendants. 
In another words, $F(T)$ is the additive functional defined by the toll
function $f(T)$. The advantage of this point of view is that we have
eliminated the maximality condition and now sum over all subtrees $T_v$, and
that we can use general results for this type of sums, see 
\citet{SJ296}.

We let $\cT$ denote the random binary search tree with a random number of
elements such that $\PP(|\cT|=n)=2/((n+1)(n+2))$, $n\ge1$. 
The random binary tree $\cT$
can be constructed by a continuous-time branching process: 
Let $(\ccT_t)_{t\ge0}$ be the growing tree that starts
with an isolated root at time $t=0$ and 
such that each existing node gets 
a left and a right child after random
waiting times that are independent and $\Exp(1)$;
we stop the process at a random time $\tau\sim\Exp(1)$,
independent of everything else, and can take $\cT=\ccT_\tau$, see 
\citet{Aldous-fringe} 
(where it is also proved that $\cT$ is the limit in
distribution of a random fringe tree in a binary search tree). 

\section{The mean}\label{Smean}

Recall that $\ctn$ is the random binary search tree with $n$ nodes. Define
$\nu_n:=\E F(\ctn)$ and $\mu_n:=\E f(\ctn)$, with $F$ and $f$ as in \refS{Sbin}.
(In particular, $\nu_0=\mu_0=0$, while $\nu_1=\mu_1=1$ since
$F(\cT_1)=f(\cT_1)=1$.) 
For $n\ge2$, $\ctnl$ is empty with probability $1/n$, and conditioned on
this event, $\ctnr$ has the same distribution as $\cT_{n-1}$. 
The same holds if we interchange $\sL$ and $\sR$.
Hence, taking
the expectation in \eqref{f},
\begin{equation}\label{munu}
  \mu_n = \tfrac{2}n\bigpar{1-\E F(\cT_{n-1})} 
= \tfrac{2}n\bigpar{1-\nu_{n-1}} ,
\qquad n\ge2.
\end{equation}
Furthermore, we see that \eqref{f} implies
\begin{equation}\label{max}
\PP\bigpar{f(\ctn)\neq0}\le \xfrac{2}n .
\end{equation}
Since obviously $0\le F(T)\le |T|$, we have by \eqref{f} also
$-|T|\le f(T)\le 1$ and thus 
\begin{equation}\label{maa}
|f(T)|\le |T|  
\end{equation}
for any binary tree $T$.
In particular, this and \eqref{max} yield
\begin{equation}
  \label{mua}
|\mu_n|\le\E|f(\ctn)|\le n\PP\bigpar{f(\ctn)\neq0}\le 2.
\end{equation}

It is now a simple consequence of general results that $\nu_n:=\E F(\ctn)$ is
asymptotically linear in $n$. Recall the random binary tree $\cT$ defined in
\refS{Sbin}. 
\begin{lemma}
  \label{LEF}
  \begin{equation}\label{nun}
\nu_n:=\E F(\ctn) = n\ga+O(1),	
  \end{equation}
where
\begin{equation}\label{lefa}
  \begin{split}
  \ga&:=\E f(\cT)
=\sumni \frac2{(n+1)(n+2)}\E f(\ctn)
=\sumni \frac2{(n+1)(n+2)}\mu_n
\\&\phantom:
=\sumni \frac4{n(n+1)(n+2)}(1-\nu_{n-1}).	
  \end{split}
\end{equation}
\end{lemma}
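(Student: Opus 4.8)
The plan is to use the observation, made right after \eqref{ftv}, that $F$ is the additive functional on binary trees with toll function $f$. One option is then to quote the general theory of additive functionals on the \rbst{} $\ctn$ (see \citet{SJ296}): the estimates \eqref{max} and \eqref{maa} say that $f(\ctn)$ is bounded in $L^1$, uniformly in $n$, and is nonzero only with probability $O(1/n)$, which is amply enough to conclude an expansion $\E F(\ctn)=\ga n+O(1)$ with $\ga=\E f(\cT)$. I will instead outline a self-contained derivation via the recursion for $\nu_n$. For $n\ge1$, the number of nodes of $\ctnl$ is uniform on $\set{0,\dots,n-1}$, and conditioned on this number being $m$ the subtrees satisfy $\ctnl\eqd\ctx m$ and $\ctnr\eqd\ctx{n-1-m}$ independently; taking expectations in $F(T)=F(\tl)+F(\tr)+f(T)$ gives
\begin{equation*}
  \nu_n=\mu_n+\frac2n\sum_{m=0}^{n-1}\nu_m,\qquad n\ge1.
\end{equation*}

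To solve this I would first eliminate the sum: multiplying by $n$ and subtracting the same identity for $n-1$ gives the first-order recursion $n\nu_n=(n+1)\nu_{n-1}+n\mu_n-(n-1)\mu_{n-1}$, valid for $n\ge2$. Dividing by $n(n+1)$ and telescoping from $2$ up to $n$ yields
\begin{equation*}
  \frac{\nu_n}{n+1}=\frac{\nu_1}{2}+\sum_{k=2}^{n}\frac{k\mu_k-(k-1)\mu_{k-1}}{k(k+1)}.
\end{equation*}
Bounding the summand directly with $\abs{\mu_k}\le2$ from \eqref{mua} only gives $O(\log n)$; the key step, and the one point that needs care, is to rearrange the sum by Abel summation, which rewrites it as
\begin{equation*}
  \frac{\mu_n}{n+1}-\frac{\mu_1}{6}+\sum_{k=2}^{n-1}\frac{2\mu_k}{(k+1)(k+2)}.
\end{equation*}
Now $\sum_k\abs{\mu_k}/\bigpar{(k+1)(k+2)}<\infty$ by \eqref{mua}, so $\nu_n/(n+1)$ converges; inserting $\nu_1=\mu_1=1$ identifies the limit as $\ga=\sum_{k\ge1}\frac{2\mu_k}{(k+1)(k+2)}$. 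This is exactly $\E f(\cT)$, because $\PP(\abs{\cT}=k)=\frac{2}{(k+1)(k+2)}$ and $\cT$ conditioned on $\abs{\cT}=k$ is distributed as $\ctk$; the remaining expressions for $\ga$ in \eqref{lefa} then follow by substituting \eqref{munu}.

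Finally, for the error term: the tail $\sum_{k\ge n}\abs{\mu_k}/\bigpar{(k+1)(k+2)}$ and the term $\mu_n/(n+1)$ are each $O(1/n)$ (again by \eqref{mua}), so $\nu_n/(n+1)=\ga+O(1/n)$, i.e.\ $\nu_n=(n+1)\ga+O(1)=\ga n+O(1)$, which is \eqref{nun}. I do not expect any deep difficulty here; the main thing to get right is the summation-by-parts rearrangement above (a naive estimate loses a spurious factor $\log n$), together with recognizing the limiting constant as $\E f(\cT)$ via the size distribution of $\cT$.
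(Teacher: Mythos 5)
Your derivation is correct and, once unwound, gives exactly the formula the paper relies on. The paper's proof simply cites \cite[Theorems 3.4 and 3.8]{SJ296}, which supply the closed form $\nu_n=(n+1)\sum_{k=1}^{n-1}\frac{2\mu_k}{(k+1)(k+2)}+\mu_n$ (the paper's \eqref{jw}); combined with $\abs{\mu_k}\le2$ from \eqref{mua}, this immediately yields $\nu_n=\ga n+O(1)$ with $\ga=\sum_{k\ge1}\frac{2\mu_k}{(k+1)(k+2)}=\E f(\cT)$. Your self-contained route --- taking expectations in $F(T)=F(\tl)+F(\tr)+f(T)$, differencing to get a first-order recursion for $\nu_n/(n+1)$, telescoping, and then reorganising by Abel summation --- is a direct derivation of that same closed form: after inserting $\nu_1=\mu_1=1$, your intermediate identity becomes precisely $\nu_n/(n+1)=\mu_n/(n+1)+\sum_{k=1}^{n-1}\frac{2\mu_k}{(k+1)(k+2)}$, which is \eqref{jw}. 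So the two proofs are mathematically the same; yours is more elementary and avoids invoking the general fringe-tree machinery, at the cost of carrying out the telescoping and Abel summation by hand, and you correctly pinpoint the summation by parts as the step that replaces the naive $O(\log n)$ bound with the needed $O(1)$.
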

\begin{proof}
  An instance of \citet[Theorem 3.8]{SJ296}. 
More explicitly, see
\cite[Theorem 3.4]{SJ296},
\begin{equation}
  \label{jw}
\E F(\ctn) = (n+1) \sum_{k=1}^{n-1} \frac{2}{(k+1)(k+2)}\mu_k + \mu_n,
\end{equation}
which implies the result by \eqref{mua} and \eqref{munu}.
\end{proof}

In order to prove \refT{Tmean}, it remains to show that $\ga$ defined in
\eqref{lefa} equals $(1-e\qww)/4$ 
as asserted in \eqref{ga}. In other words, we need the following.

\begin{lemma}
  \label{Lga}
  \begin{equation}
	\E f(\cT)=\frac{1-e\qww}4.
  \end{equation}
\end{lemma}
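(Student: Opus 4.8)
The plan is to compute $\E f(\cT)$ by using the continuous-time branching process construction of $\cT=\cttau$ from \refS{Sbin}, in which $(\ctt)_{t\ge0}$ is the growing Yule tree and $\tau\sim\Exp(1)$ is independent of it. As a first ingredient, set $g(t):=\E F(\ctt)$. By \eqref{FT} the root of $\ctt$ is green, so $F(\ctt)=1$, unless it has already acquired both children by time $t$, in which case $F(\ctt)=F(A)+F(B)$ with $A,B$ the two root-subtrees. Conditioned on both children being present at time $t$, their birth times are independent, each distributed as $\Exp(1)$ conditioned to lie in $[0,t]$; and given these times, $A$ and $B$ are independent copies of the Yule tree run for the respective remaining times. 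Taking expectations and using $1-(1-e^{-t})^2=2e^{-t}-e^{-2t}$ gives
\begin{equation*}
  g(t)=2e^{-t}-e^{-2t}+2(1-e^{-t})\int_0^t g(t-s)e^{-s}\dd s .
\end{equation*}
Setting $G(t):=\int_0^t g(u)e^{u}\dd u$ turns this into the linear first-order ODE $G'(t)=2-e^{-t}+2(1-e^{-t})G(t)$ with $G(0)=0$, which I would solve with the integrating factor $e^{-2t-2e^{-t}}$ and the substitution $w=e^{-t}$, obtaining
\begin{equation*}
  G(t)=\tfrac14 e^{2t}+\tfrac12 e^{t}-\tfrac12-\tfrac14 e^{2t-2+2e^{-t}} .
\end{equation*}

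Next I would evaluate $\E f(\cT)=\E f(\cttau)$ directly from \eqref{f}, conditioning on the birth times $\sigma_\sL,\sigma_\sR$ of the root's two children (i.i.d.\ $\Exp(1)$, independent of $\tau$). The toll $f(\cttau)$ vanishes unless at most one of the two children has appeared by time $\tau$. If neither has — probability $\PP(\tau<\sigma_\sL\wedge\sigma_\sR)=\tfrac13=\PP(\abs{\cT}=1)$ — then $f=1$. On the event $\{\sigma_\sL>\tau\ge\sigma_\sR\}$ (only the right child present) one has $\tl=\emptyset$ and $f=1-F(\tr)$, where $\tr$ is a copy of $\ccT_{\tau-\sigma_\sR}$ which, by the branching property, is independent of $(\sigma_\sL,\sigma_\sR,\tau)$ given $\tau-\sigma_\sR$, so that on this event $\E[F(\tr)\mid\sigma_\sL,\sigma_\sR,\tau]=g(\tau-\sigma_\sR)$; and symmetrically with left and right interchanged. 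Hence, by the left--right symmetry of the construction,
\begin{equation*}
  \E f(\cT)=\tfrac13+2\,\E\Bigl[\bigl(1-g(\tau-\sigma_\sR)\bigr)\ett{\sigma_\sL>\tau\ge\sigma_\sR}\Bigr].
\end{equation*}
Writing this expectation as $\int_0^\infty e^{-2t}\bigl(\int_0^t(1-g(t-s))e^{-s}\dd s\bigr)\dd t$ — the extra factor $e^{-t}$ beyond the two densities being $\PP(\sigma_\sL>t)$ — and substituting $u=t-s$, the inner integral collapses to $1-e^{-t}-e^{-t}G(t)$, so the expectation equals $\int_0^\infty\bigl(e^{-2t}-e^{-3t}-e^{-3t}G(t)\bigr)\dd t$. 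Each of the four terms of $G(t)$ integrates elementarily against $e^{-3t}$, the only mildly nonstandard one being $\int_0^\infty e^{-t}e^{2e^{-t}}\dd t=\int_0^1 e^{2w}\dd w=\tfrac12(e^2-1)$ via $w=e^{-t}$; collecting everything gives $\int_0^\infty e^{-3t}G(t)\dd t=\tfrac13-\tfrac{1-e^{-2}}8$, hence $\E[(1-g(\tau-\sigma_\sR))\ett{\cdots}]=\tfrac{1-e^{-2}}8-\tfrac16$, and finally $\E f(\cT)=\tfrac13+2\bigl(\tfrac{1-e^{-2}}8-\tfrac16\bigr)=\tfrac{1-e^{-2}}4$.

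I expect the first step to be the main obstacle: one must correctly identify the conditional laws of the two child-birth times and invoke the branching property so that the two root-subtrees are genuinely independent copies of the Yule tree run for the remaining times, and then solve the ODE without sign errors. Everything after $G$ is in closed form is routine integration, and the interchanges of expectation and integration are justified because $F(T)\le\abs{T}$ and $\E\abs{\ctt}=2e^{t}-1<\infty$. As a consistency check, the closed form gives $g(t)\sim2\ga e^{t}\sim\ga\,\E\abs{\ctt}$, matching $\E F(\ctn)\sim\ga n$.
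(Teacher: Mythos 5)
Your argument is correct, and it reaches the answer by a genuinely different route than the paper. The paper's proof (Lemmas~\ref{L2}--\ref{L3}) decomposes $f$ by inclusion--exclusion over green chains from the root, $f(T)=\sum_{k\ge1}(-1)^{k-1}f_k(T)$, and then computes $\E f_k(\cT)=\frac{2^{k-1}}{k!}-\frac{2^k}{(k+2)!}$ for each $k$ by racing the exponential clocks $\Cr(v_1),\dots,\Cr(v_k),\Cl(v_k),C_0$ step by step down a candidate chain; the alternating series then telescopes to $\frac{1-e^{-2}}{4}$. You instead set up a renewal equation for $g(t)=\E F(\ccT_t)$ by conditioning on whether the root has acquired both children by time $t$, convert it to the first-order linear ODE $G'(t)=2-e^{-t}+2(1-e^{-t})G(t)$ for $G(t)=\int_0^t g(u)e^u\dd u$ (solved via integrating factor and the substitution $w=e^{-t}$), and finally expand $\E f(\cttau)$ by conditioning on the root's two birth clocks and $\tau$, reducing everything to elementary integrals of $G$ against $e^{-3t}$. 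I checked the derivation in detail: the integral equation, its ODE form and closed-form solution, the $\frac13$ contribution from $\{\tau<\sigma_\sL\wedge\sigma_\sR\}$, the identity $\int_0^t g(t-s)e^{-s}\dd s = e^{-t}G(t)$, the evaluation $\int_0^\infty e^{-3t}G(t)\dd t=\frac13-\frac{1-e^{-2}}8$, and the final assembly all check out, as does the asymptotic consistency check $g(t)\sim 2\ga e^t$. The paper's approach is combinatorially slicker and entirely avoids ODEs (and as the paper's final remark shows it generalises cleanly to an arbitrary doomsday rate, yielding a confluent hypergeometric expression); yours is more mechanical — once one has the branching-process construction, the renewal equation for $\E F(\ccT_t)$ is the obvious thing to write down, and the rest is calculus — at the cost of having to carry an explicit, somewhat ungainly solution $G(t)$ through the second stage. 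Both are valid, self-contained probabilistic proofs using the same construction from \refS{Sbin}.
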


We can prove \refL{Lga} by probabilistic methods, using the construction of
$\cT$ by a branching process in \refS{Sbin}.
However, this proof is
considerably longer than the proof of \refT{Tmean} by singularity analysis
of generating functions in \cite{DF10} and \cite{DFL}; we nevertheless find
the probabilistic proof interesting, and perhaps useful for future
generalizations, but since the methods in it are not needed for other
results in the present paper, we postpone our proof of \refL{Lga} to \refS{Sga}.

\section{Variance}

Let $\gamm_n:=\Var(f(\ctn))$ and $\gss_n:=\Var(F(\ctn))$.
Then $\gamm_0=\gamm_1=\gss_0=\gss_1=0$ and, for $n\ge2$, using \eqref{f},
\begin{equation}
  \label{noa}
\gamm_n = \E f(\ctn)^2 -\mu_n^2
=\frac{2}n\E\bigpar{F(\cT_{n-1})-1}^2-\mu_n^2
\le \frac{2}n n^2 =2n.
\end{equation}

Before proving the variance asymptotics in \eqref{Xvar},
we begin with a weaker estimate.

\begin{lemma}
  \label{Lgss0}
For $n\ge1$,
\begin{equation}
  \gss_n:=\Var F(\ctn) = O(n\log^2 n).
\end{equation}
\end{lemma}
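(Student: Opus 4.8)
The plan is to derive and then solve a recursion for $\gss_n=\Var F(\ctn)$ obtained by conditioning on the size $m:=|\ctnl|$ of the left subtree, which is uniform on $\{0,\dots,n-1\}$ and, given which, $\ctnl$ and $\ctnr$ are independent with the distributions of $\cT_m$ and $\cT_{n-1-m}$. The key structural observation is that the toll $f(\ctn)$ is nonzero only when $m\in\{0,n-1\}$, i.e.\ when a subtree is empty; in those two cases the root of $\ctn$ has outdegree $1$, hence is green, so $F(\ctn)=1$ deterministically by \eqref{FT}, whereas for $1\le m\le n-2$ one simply has $F(\ctn)=F(\ctnl)+F(\ctnr)$. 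Writing $Y_n:=F(\ctn)-\nu_n$, it follows that $Y_n=1-\nu_n$ is constant on the event $\{m\in\{0,n-1\}\}$, while on $\{1\le m\le n-2\}$
\[
  Y_n=\bigpar{F(\ctnl)-\nu_m}+\bigpar{F(\ctnr)-\nu_{n-1-m}}+\bigpar{\nu_m+\nu_{n-1-m}-\nu_n},
\]
where the first two summands are centered and conditionally independent given $m$, and the third is constant given $m$.

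Squaring, taking conditional expectations given $m$ (all cross terms vanish, by conditional independence), and averaging over $m$ then gives, for $n\ge2$,
\[
  \gss_n=\frac2n\sum_{m=1}^{n-2}\gss_m+R_n,\qquad
  R_n:=\frac{2(1-\nu_n)^2}{n}+\frac1n\sum_{m=1}^{n-2}\bigpar{\nu_m+\nu_{n-1-m}-\nu_n}^2 .
\]
(The boundary term $2(1-\nu_n)^2/n\sim 2\ga^2n$ here is precisely what will eventually produce the $\Theta(n\log n)$ variance in \eqref{Xvar}.) The next step is to estimate $R_n$ using \refL{LEF}: since $\nu_k=\ga k+O(1)$ uniformly in $k$, we get $\nu_m+\nu_{n-1-m}-\nu_n=-\ga+O(1)=O(1)$ uniformly in $m$ and $n$, and $(1-\nu_n)^2=O(n^2)$, so $R_n=O(n)$.

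It then remains to solve $\gss_n\le\frac2n\sum_{m<n}\gss_m+O(n)$ with $\gss_0=\gss_1=0$. Setting $A_n:=\sum_{m=0}^{n-1}\gss_m$ this becomes $A_{n+1}\le(1+\tfrac2n)A_n+O(n)$, and a routine induction — using convexity of $x\mapsto x^2\log x$ to absorb the $O(n)$ term into the increment of $A_n$ — yields $A_n=O(n^2\log n)$, whence $\gss_n=A_{n+1}-A_n\le\frac2nA_n+O(n)=O(n\log n)$, which is in fact slightly stronger than the claimed $O(n\log^2n)$. I expect the main obstacle to be getting this recursion exactly right: correctly isolating the two boundary events $m\in\{0,n-1\}$ (the only places where $f(\ctn)\neq0$, and where $F(\ctn)$ collapses deterministically to $1$), and verifying that conditional independence of $\ctnl$ and $\ctnr$ kills every cross term, so that the inhomogeneous term is the genuinely $O(n)$ quantity $R_n$ rather than something of order $n^2$. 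Once the recursion and the bound $R_n=O(n)$ are in hand, the induction bounding $A_n$ is straightforward.
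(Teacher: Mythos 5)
Your proof is correct, and it takes a genuinely different route from the paper's. The paper proves this lemma in one line by invoking the general fringe-tree variance bound \cite[Theorem 3.9]{SJ296} together with the crude estimates $\gamm_k\le 2k$ from \eqref{noa} and $\mu_k=O(1)$ from \eqref{mua}; this is short but opaque, since the work is hidden inside the cited theorem. You instead derive the exact variance recursion from scratch by conditioning on the root split $m=|\ctnl|$, using the observation that on $\{m\in\{0,n-1\}\}$ the value $F(\ctn)=1$ is deterministic (green root) while on $\{1\le m\le n-2\}$ the functional decomposes additively over the two conditionally independent subtrees. That yields
$\gss_n=\frac2n\sum_{m=1}^{n-2}\gss_m+R_n$ with $R_n=O(n)$ by \refL{LEF}, and the standard passage to the partial sums $A_n=\sum_{m<n}\gss_m$, giving $A_{n+1}\le(1+\tfrac2n)A_n+O(n)$, closes by induction against the ansatz $A_n\le Cn^2\log n$ (the derivative of $x^2\log x$ is $2x\log x+x$, so the increment $\approx 2Cn\log n+Cn$ dominates $\tfrac2nA_n+O(n)$ once $C$ is large). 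Two remarks. First, your argument is self-contained and more transparent about \emph{why} the bound holds: the inhomogeneous term $R_n\sim 2\ga^2 n$ visibly comes from the boundary event, which is exactly the mechanism producing the $\Theta(n\log n)$ variance in \refT{TVF}. Second, you actually obtain the sharper $\gss_n=O(n\log n)$ rather than the paper's $O(n\log^2 n)$; this is the correct order by \refT{TVF}, and either bound suffices for the downstream use in \eqref{jvh}--\eqref{lgVH}, so the paper simply did not bother to optimize. The one point of sloppiness in your write-up (extending the sum from $\sum_{m=1}^{n-2}$ to $\sum_{m<n}$) is harmless since $\gss_k\ge0$.
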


\begin{proof}
  By \cite[Theorem 3.9]{SJ296}, where it suffices to sum to $n$ since we may
  replace $f(T)$ by $0$ for $|T|>n$ without changing $F(\ctn)$,
  \begin{equation}\label{gssO}
	\gss_n \le Cn\lrpar{\biggpar{\sumkn\frac{\gam_k}{k\qqc}}^2
	  +\sup_{k}\frac{\gamm_k}{k}+\sumkn\frac{\mu_k^2}{k^2}}
=O(n\log^2 n),
  \end{equation}
using \eqref{noa} and \eqref{mua}, provided $n\ge2$. The case $n=1$ is trivial.
\end{proof}

Write $f(T)=g(T)+h(T)$, where 
\begin{equation}\label{jg}
  g(T):=
  \begin{cases}
	1-\nu_{|T|-1}, & \tl=\emptyset,T\neq\emptyset
\text{ or }  \tr=\emptyset,T\neq\emptyset, \\
0, & \text{otherwise}.
  \end{cases}
\end{equation}
and thus, see \eqref{f},
\begin{equation}\label{jh}
  h(T):=
  \begin{cases}
	\nu_{|\tr|}-F(\tr), & \tl=\emptyset, \\
	\nu_{|\tl|}-F(\tl), & \tr=\emptyset, \\
0, & \text{otherwise}.
  \end{cases}
\end{equation}
Then $g(\cT_1)=1$,  $h(\cT_1)=0$, and, for $k\ge2$,
using \eqref{munu} and \eqref{mua},
\begin{align}
  \E g(\cT_k)&=\frac{2}{k}\bigpar{1-\nu_{k-1}}=\mu_k=O(1), \label{jeg}
\\  
\E h(\cT_k)&=\frac{2}{k}\E\bigpar{\nu_{k-1}-F(\cT_{k-1})}=0 \label{jeh},
\end{align}
{and, using \refL{Lgss0},}
\begin{equation}
  \Var h(\cT_k)=\frac{2}{k}\E\bigpar{\nu_{k-1}-F(\cT_{k-1})}^2
=\frac{2}k\gss_{k-1} = O(\log^2 k).
\label{jvh}  
\end{equation}

Let, for an arbitrary binary tree $T$,    
\begin{align}\label{GH}
G(T):=\sum_{v\in T} g(T_v)&&& \text{and}&& H(T):=\sum_{v\in T} h(T_v),  
\end{align}
so by \eqref{ftv}, 
\begin{equation}  \label{FGH}
F(T)=G(T)+H(T).
\end{equation}

\begin{lemma}
  \label{LG}
  For $n\ge1$, 
  \begin{align}
	\E G(\ctn)&=\nu_n, \label{lgG}\\
	\E H(\ctn)&=0, \label{lgH}\\
\Var H(\ctn)&=O(n). \label{lgVH} 
\end{align}
\end{lemma}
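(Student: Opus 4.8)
The plan is to treat $G$ and $H$ separately as additive functionals with tolls $g$ and $h$, and to apply the general machinery of \cite{SJ296} just as in the proofs of \refL{LEF} and \refL{Lgss0}. For \eqref{lgG}: the toll $g$ satisfies $\E g(\ctk)=\mu_k$ for all $k\ge1$ (this holds for $k=1$ since $g(\cT_1)=1=\mu_1$, and for $k\ge2$ by \eqref{jeg}), so $G$ and $F$ have tolls with the same expectation; since the expected value of an additive functional depends only on the expected toll via the formula \eqref{jw} (a linear operation), we get $\E G(\ctn)=\E F(\ctn)=\nu_n$. For \eqref{lgH}: the toll $h$ has $\E h(\ctk)=0$ for every $k\ge1$ (from $h(\cT_1)=0$ and \eqref{jeh}), and again by linearity of \eqref{jw} the additive functional $H$ has mean $\E H(\ctn)=0$.

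The substantive part is \eqref{lgVH}. Here I would invoke the variance bound \cite[Theorem 3.9]{SJ296} applied to the functional $H$ with toll $h$, exactly as in \eqref{gssO}, but now exploiting that $\E h(\ctk)=0$: the term $\sum_{k\le n}\mu_k^2/k^2$ that appeared for $F$ is absent (the mean of the toll is zero), so the bound reduces to
\begin{equation*}
  \Var H(\ctn)\le Cn\lrpar{\biggpar{\sumkn\frac{\norm{h(\ctk)}_2}{k\qqc}}^2+\sup_k\frac{\Var h(\ctk)}{k}}.
\end{equation*}
By \eqref{jvh}, $\Var h(\ctk)=O(\log^2 k)$, hence $\E h(\ctk)^2=\Var h(\ctk)=O(\log^2 k)$ (the toll is centered) and $\norm{h(\ctk)}_2=O(\log k)$. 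Therefore $\sup_k \Var h(\ctk)/k=O(1)$, and $\sum_{k\le n}\norm{h(\ctk)}_2/k\qqc=\sum_{k\le n}O(\log k)/k\qqc=O(1)$ since $\sum_k \log k/k\qqc$ converges. Both terms in the parenthesis are $O(1)$, giving $\Var H(\ctn)=O(n)$. The case $n=1$ is trivial ($H(\cT_1)=0$).

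The main obstacle is purely a matter of bookkeeping: checking that the hypotheses of \cite[Theorem 3.9]{SJ296} are met for $h$ (in particular that $h$ is a legitimate toll function, bounded appropriately — note $|h(T)|\le|T|$ follows from $\nu_{|T_\sR|},F(T_\sR)\le|T|$, paralleling \eqref{maa}) and that the centering $\E h(\ctk)=0$ indeed kills the offending term in the variance formula. Once the right form of the $\cite{SJ296}$ estimate is identified, the summability computations are routine and the logarithmic factor in \eqref{jvh} is comfortably absorbed. I would double-check whether the statement of \cite[Theorem 3.9]{SJ296} requires the toll to be centered or merely allows the mean-term to be separated out; if only the latter, one writes $h=\wbar h+(h-\wbar h)$ trivially with $\wbar h=0$ here, so nothing changes.
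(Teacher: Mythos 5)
Your proof is correct and follows essentially the same route as the paper: it applies \cite[Theorems 3.4 and 3.9]{SJ296} to the tolls $g$ and $h$, uses $\E h(\ctk)=0$ to kill the mean-term in the variance bound, and feeds in $\Var h(\ctk)=O(\log^2 k)$ from \eqref{jvh} to get $O(n)$. The only cosmetic difference is that you prove \eqref{lgG} directly from $\E g(\ctk)=\mu_k$, whereas the paper first proves \eqref{lgH} and then deduces \eqref{lgG} via $G=F-H$; both are one-line consequences of the same linearity.
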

\begin{proof}
By \cite[Theorem 3.4]{SJ296}, \cf{} \eqref{jw}, and \eqref{jeh}, 
  \begin{equation}
\E H(\ctn) = (n+1) \sum_{k=1}^{n-1} \frac{2}{(k+1)(k+2)}\E h(\ctk) + 
\E h(\ctn) = 0,
  \end{equation}
which proves \eqref{lgH}. This implies \eqref{lgG}, 
since by \eqref{FGH},
\begin{equation}
\E G(\ctn)=\E F(\ctn)-\E H(\ctn) = \nu_n.  
\end{equation}

Similarly, 
by \cite[Theorem 3.9]{SJ296}, \cf{} \eqref{gssO}, and \eqref{jeh}--\eqref{jvh}, 
\begin{equation*}
  \Var H(\ctn) \le C n \lrpar{\biggpar{\sumki \frac{\log k}{k\qqc}}^2
+\sup_{k\ge1}\frac{\log^2 k}{k}+0}=O(n).
\qedhere
\end{equation*}
\end{proof}

We shall see that this means that 
$H(\ctn)$ is asymptotically negligible, and thus it suffices to
consider $G(\ctn)$.

Note that $g(T)$ depends only on the sizes $|\tl|$ and $|\tr|$. This
enables us to easily estimate the variance of $G(\ctn)$.

\begin{theorem}
  \label{TVG}
For all $n\ge1$,
\begin{equation}\label{tvg}
  \Var G(\ctn) = 4\ga^2 n\log n + O(n).
\end{equation}
\end{theorem}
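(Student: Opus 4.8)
The plan is to compute $\Var G(\ctn)$ directly from the representation $G(\ctn)=\sum_{v\in\ctn}g((\ctn)_v)$, exploiting the key observation already flagged: $g(T)$ depends only on the pair of subtree sizes $(|\tl|,|\tr|)$, so $g((\ctn)_v)$ depends only on the size of the left and right subtrees hanging below $v$. First I would recall that in a \rbst{} with $n$ nodes the subtree sizes have a well-understood joint law: if $v$ is the node whose subtree has size $k$, then conditioned on this the two children subtrees have sizes $(j,k-1-j)$ with $j$ uniform on $\{0,\dots,k-1\}$, and moreover for each $k$ there is a computable probability that a subtree of size exactly $k$ occurs, and the expected number of subtrees of size $k$ in $\ctn$ is $\frac{2(n+1)}{(k+1)(k+2)}$ for $1\le k\le n$ (this is the standard fringe-subtree count, consistent with the weights in \eqref{lefa} and the formula \eqref{jw}). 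So $\E G(\ctn)=\sum_{k=1}^n \frac{2(n+1)}{(k+1)(k+2)}\,\mu_k$ plus a boundary term, recovering \eqref{lgG}; and for the variance I would write $G(\ctn)=\sum_v g((\ctn)_v)$ and expand $\Var G(\ctn)=\sum_{v,w}\Cov\bigl(g((\ctn)_v),g((\ctn)_w)\bigr)$.

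The main point is that $g$ is essentially supported on the event $\{|\tl|=0\}\cup\{|\tr|=0\}$, i.e. on nodes with exactly one child, and on that event $g(T)=1-\nu_{|T|-1}=1-\nu_{k-1}$ where $k=|T|$. By \refT{Tmean}/\refL{LEF}, $1-\nu_{k-1}=1-(k-1)\ga+O(1)=-\ga k+O(1)$, so $g((\ctn)_v)$ contributes roughly $-\ga k$ whenever $v$ has a unary child and its subtree has size $k$, and $0$ otherwise. Hence, up to lower-order corrections,
\begin{equation*}
G(\ctn)\approx -\ga\sum_{v:\ v\text{ unary}} |(\ctn)_v|,
\end{equation*}
and $\Var G(\ctn)\approx \ga^2\Var\bigl(\sum_{v\text{ unary}}|(\ctn)_v|\bigr)$. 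The plan is then to compute this variance. I would split into the diagonal terms $\sum_v \Var g((\ctn)_v)$ and the off-diagonal covariance terms. For the diagonal, the expected number of unary nodes with subtree size $k$ is of order $1/k$ (a subtree of size $k$ has a unary root with probability $\asymp 1/k$, since $j\in\{0,k-1\}$ for the uniform split), each contributing $\asymp (\ga k)^2=\ga^2 k^2$ to the second moment, so the diagonal contributes $\sum_{k=1}^n \frac{1}{k}\cdot k^2\cdot\ga^2\asymp \ga^2 n^2$... which is too big, so the bulk must cancel against negative covariances. This is the crux: the correct bookkeeping is that $\sum_{v\text{ unary}}|(\ctn)_v|$ has mean $\asymp n$ with fluctuations, and one must track the covariance structure carefully — two unary nodes on the same root-to-leaf path have strongly negatively correlated "one-child-empty" contributions relative to independent ones. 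The cleanest route is probably to appeal directly to the general additive-functional CLT/variance machinery of \citet{SJ296}: $G$ is the additive functional with toll $g$, $\E g(\cT_k)=\mu_k=O(1)$, and the relevant "divergent" quantity is $\Var g(\cT_k)=\E(1-\nu_{k-1})^2\cdot\PP(\text{unary})+\cdots\asymp \ga^2 k^2\cdot\frac{2}{k}=2\ga^2 k$ (from \eqref{noa}-type computation, $\gamm_k=\frac2k\E(F(\cT_{k-1})-1)^2-\mu_k^2$, but here with $g$ replacing $f$ we get the cleaner $\frac2k(1-\nu_{k-1})^2-\mu_k^2=2\ga^2 k+O(1)$). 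Plugging $\Var g(\cT_k)\sim 2\ga^2 k$, i.e. $\Var g(\cT_k)/k\to 2\ga^2$, into \cite[Theorem 3.9 / the variance formula]{SJ296} gives the linear-times-log term: the sum $\sum_{k=1}^n \frac{1}{k}\cdot\frac{\Var g(\cT_k)}{k}\cdot(\dots)$ produces $\sum_{k\le n}\frac{2\ga^2}{k}\cdot(\text{const}\cdot n)$, whose leading term is $2\ga^2 n\cdot H_n$ — and the precise constant in \cite{SJ296} must come out to $4\ga^2$ rather than $2\ga^2$, the extra factor $2$ being exactly the doubling that eventually forces the "half the variance" phenomenon for $F$. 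I would therefore state the proof as: apply the explicit variance formula \cite[Theorem 3.9]{SJ296} (or the more precise asymptotic version there for tolls with $\Var g(\cT_k)\sim ck$), with $c=2\ga^2$, noting all other inputs ($\E g(\cT_k)=O(1)$, $\Var g(\cT_k)=O(k)$) are already verified, and read off $\Var G(\ctn)=4\ga^2 n\log n+O(n)$.

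The main obstacle is nailing the constant $4\ga^2$ (not $2\ga^2$ or $\ga^2$) and the $O(n)$ error, which requires the sharp form of the $\bst$ additive-functional variance asymptotics rather than just the upper bound \eqref{gssO}; one must verify that the cross-covariance terms $\Cov(g((\ctn)_v),g((\ctn)_w))$ for $v$ an ancestor of $w$ sum up to exactly double the diagonal contribution at leading order, which is precisely the content of the relevant theorem in \citet{SJ296} and is where the "$\log$" is born. A secondary technical point is controlling the replacement $1-\nu_{k-1}\mapsto-\ga k$: since $g(T)=(1-\nu_{|T|-1})\cdot\etta\{T\text{ unary}\}$, writing $1-\nu_{k-1}=-\ga k+O(1)$ introduces an error toll of size $O(1)\cdot\etta\{\text{unary}\}$ whose additive functional has variance $O(n)$ by the same machinery, hence is absorbed into the error term; this needs a one-line justification but no real work. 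Everything else — the bound $\Var g(\cT_k)=O(k)$, $\E g(\cT_k)=O(1)$ — is already in hand from \eqref{jeg}, \eqref{noa}, and \refL{LEF}.
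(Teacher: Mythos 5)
Your instinct to use the machinery of \cite{SJ296} is right, and your computation that the toll variance $\Var g(\cT_k)$ is $2\ga^2 k+O(1)$ is correct (and, as it happens, agrees to leading order with the quantity the paper actually needs). But there is a genuine gap at the decisive step: you never derive the constant $4\ga^2$; you assert that it ``must come out'' of the theorem you cite, and you attribute the discrepancy with $2\ga^2$ to ``the doubling that forces the half-the-variance phenomenon,'' which is a post-hoc rationalization, not a proof, and in fact wrong (the factor $4$ here is unrelated to the later $\frac12\Var X_n$ normalization in \refT{TCLT}).

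Concretely, two things go wrong. First, the reference: \cite[Theorem 3.9]{SJ296}, which is what you point at (and what the paper itself uses in \eqref{gssO}), gives only an \emph{upper bound} of the form $\Var G(\ctn)\le Cn(\dots)$, so it cannot produce a sharp asymptotic with an explicit constant, let alone the $O(n)$ error term in \eqref{tvg}. The paper instead uses the \emph{exact} formula from \cite[Lemma 7.1]{SJ296}:
\begin{equation*}
  \Var G(\ctn) = (n+1)\sumkni\frac{2}{(k+1)(k+2)}\psi_k + \psi_n.
\end{equation*}
Second, the quantity: $\psi_k$ in that formula is \emph{not} $\Var g(\cT_k)$; it is the variance of the one-step martingale increment,
\begin{equation*}
  \psi_k := \E\bigpar{\nu_{I_k}+\nu_{k-1-I_k}+g(k,I_k,k-1-I_k)-\nu_k}^2,
\end{equation*}
with $I_k$ uniform on $\set{0,\dots,k-1}$. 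This is exactly what handles the ancestor--descendant covariances you (correctly) worried would spoil a naive diagonal-only computation. It is a pleasant coincidence that $\psi_k$ and $\Var g(\cT_k)$ both equal $2\ga^2 k+O(1)$ for this particular $g$ (in the unary case the martingale increment is $1-\nu_k$ while $g$ is $1-\nu_{k-1}$, which agree to $O(1)$), but you cannot substitute one for the other in Lemma~7.1 without justification, and if you try to work directly with $\Var g$ and covariances you are back to the open-ended bookkeeping you abandoned. Once $\psi_k=2\ga^2k+O(1)$ is plugged into the exact formula, the constant $4\ga^2$ is just arithmetic: $\frac{2}{(k+1)(k+2)}\cdot 2\ga^2 k\sim \frac{4\ga^2}{k}$, and summing gives $4\ga^2 n\log n+O(n)$. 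That elementary step is the entire content of the theorem, and it is precisely what your proposal leaves unproved.
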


\begin{proof}
  Write $g(T)=g(|T|,|\tl|,|\tr|)$. 
(We only care about $g(k,j,l)$ when $j+l=k-1$, but use three arguments for
  emphasis.) 
Thus $g(k,0,k-1)=g(k,k-1,0)=1-\nu_{k-1}$ and otherwise $g(k,j,k-j-1)=0$.
Let, as in \cite[Theorem 1.29]{SJ296}, $I_k$ be uniformly distributed
on \set{0,\dots,k-1} and
\begin{equation}\label{psi}
  \begin{split}
	\psi_k &:= \E\bigpar{\nu_{I_k}+\nu_{k-1-I_k}+g(k,I_k,k-1-I_k)-\nu_k}^2
\\&
=\frac{1}k\sum_{j=1}^{k-2}(\nu_{j}+\nu_{k-1-j}-\nu_k)^2
+\frac{2}k \bigpar{\nu_{k-1}+1-\nu_{k-1}-\nu_k}^2
\\&
=\frac{1}k\sum_{j=1}^{k-2}(\nu_{j}+\nu_{k-1-j}-\nu_k)^2
+\frac{2}k (\nu_{k}-1)^2
\\&
=O(1)+\frac{2}k\bigpar{\ga k+O(1)}^2 = 2\ga^2 k+O(1),
  \end{split}
\end{equation}
where we used that $\nu_j=\ga j+O(1)$ by \refT{Tmean}.
By \cite[Lemma 7.1]{SJ296}, then
\begin{equation}\label{varg}
  \begin{split}
\Var G(\ctn)& = (n+1)\sumkni\frac{2}{(k+1)(k+2)}\psi_k+\psi_n
\\&	
= (n+1)\sumkni\frac{4\ga^2 k+O(1)}{(k+1)(k+2)}+O(n)
= (n+1)\sumkni\frac{4\ga^2 }{k}+O(n)
\\&
=4\ga^2 n\log n+O(n).
  \end{split}
\raisetag{\baselineskip}
\end{equation}
\end{proof}

We can now prove \eqref{Xvar} in \refT{Tmom}. (Higher moments are treated in
\refS{Smom}.) 
\begin{theorem}
  \label{TVF}
For all $n\ge1$,
\begin{equation}\label{tvf}
  \Var F(\ctn) = 4\ga^2 n\log n + o(n\log n).
\end{equation}
\end{theorem}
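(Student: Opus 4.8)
The plan is to combine \refT{TVG} with the decomposition \eqref{FGH}, $F(\ctn)=G(\ctn)+H(\ctn)$, and show that the cross term and the variance of $H$ are of lower order than $n\log n$. Concretely, write
\begin{equation*}
  \Var F(\ctn) = \Var G(\ctn) + 2\Cov\bigpar{G(\ctn),H(\ctn)} + \Var H(\ctn).
\end{equation*}
By \refT{TVG}, $\Var G(\ctn)=4\ga^2 n\log n+O(n)$, and by \eqref{lgVH} in \refL{LG}, $\Var H(\ctn)=O(n)=o(n\log n)$. It then remains to control the covariance. By the \CSineq,
\begin{equation*}
  \bigabs{\Cov\bigpar{G(\ctn),H(\ctn)}}
  \le \bigpar{\Var G(\ctn)}\qq\bigpar{\Var H(\ctn)}\qq
  = O\bigpar{(n\log n)\qq}\cdot O\bigpar{n\qq} = O\bigpar{n\qqc (\log n)\qqw},
\end{equation*}
which is $o(n\log n)$. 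Adding the three pieces yields \eqref{tvf}. (The case $n=1$ is trivial, as both sides vanish.)

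The only subtlety is that \refT{TVG} is stated with an $O(n)$ error term, while the theorem to be proved only claims $o(n\log n)$; so in fact the present statement is weaker than what the building blocks give, and a slightly sharper conclusion $\Var F(\ctn)=4\ga^2 n\log n+O\bigpar{n\qqc(\log n)\qqw}$ would follow from exactly the same argument. I would state \eqref{tvf} as written since the weaker form suffices for the asymptotic-normality application and matches \eqref{Xvar}. There is essentially no obstacle here: all the work has already been done in \refL{LG} and \refT{TVG}, and this theorem is a one-line consequence via \CS{}. The main point to be careful about is simply invoking the right earlier results ($\Var H=O(n)$ from \eqref{lgVH}, not any pointwise bound on $H$) and not accidentally claiming a covariance bound better than what \CS{} gives.
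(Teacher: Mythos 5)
Your proof is correct and is essentially the paper's argument: the paper cites \eqref{FGH}, \eqref{tvg}, \eqref{lgVH} and Minkowski's inequality (the triangle inequality for $\sqrt{\Var}$), which is the same bound you obtain by expanding $\Var(G+H)$ and applying \CS{} to the cross term. Your observation that the argument actually yields the sharper error $O(n^{3/2}(\log n)^{1/2})$ is a valid remark, though the paper does not pursue it.
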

This follows from \eqref{FGH},
\eqref{tvg} and \eqref{lgVH} by Minkowski's inequality
(the triangle inequality for $\sqrt{\Var{}}$).

\section{Asymptotic normality}\label{SCLT}

We prove the central limit theorem \refT{TCLT} by a martingale central limit
theorem for a suitable martingale that we construct in this section.

Consider the infinite binary tree $\Too$, where each node has two
children, and denote its root by $o$. 
We may regard any binary tree $T$ as a subtree of $\Too$ with the
same root $o$. (In the general sense that the node set $V(T)$ is a 
subset of $\Voo:=V(\Too)$, and that the left and right children are the same
as in $\Too$, when they exist.)
In particular we regard the \rbst{}
$\ctn$ as a subtree of $\Too$.

Order the nodes in $\Too$ in breadth-first order as $v(1)=o,v(2),\dots$, 
and let $V_j:=\set{v(1),\dots,v(j)}$ be the set
of the first $j$ nodes. Let $\cF_j$ be the $\gs$-field generated by the
sizes $|\cT_{n,v,\sL}|$ and $|\cT_{n,v,\sR}|$ of the two child 
subtrees of $\ctn$
at each node $v\in V_j$. Equivalently, we may regard $V_j$ as the internal
nodes in a full binary tree; let $\dV_j$ be the corresponding set of $j+1$
external nodes. Then $\cF_j$ is generated by the subtree sizes $|\cT_{n,v}|$
for all $v\in \dV_j$, together with the indicators $\ett{v\in \ctn}$,
$v\in V_j$, that describe $\ctn\cap V_j$. (We regard the subtree $\cT_{n,v}$
as defined for all $v\in \Voo$, with $\cT_{n,v}=\emptyset$ if $v\notin\ctn$.)
Then, conditioned on $\cF_j$, $\ctn$ consists of some given 
subtree of $V_j$
together with attached subtrees $\cT_{n,v}$ at all nodes $v\in\dV_j$; these
are independent \bst{s} of some given orders.

We allow here $j=0$; $V_0=\emptyset$ and $\cF_0$ is the trivial $\gs$-field.

\begin{remark}\label{Rsplit}
  As is well-known, see \eg{} \cite{Drmota}, another construction of the
  \rbst{} $\ctn$ ($n\ge1$)
is to let the random variable $I_n$ be uniformly
  distributed on \set{0,\dots,n-1}, and to let $\ctn$ be
  defined recursively such that, given $I_n$, 
$\ctnl$ and $\ctnr$ are independent binary
  search trees with $|\ctnl|=I_n$ and $|\ctnr|=n-1-I_n$. 
(When the tree is used to sort $n$ keys, $I_n$ tells how many of the keys
that are assigned to the left subtree.)
The pair $(I_n,n-1-I_n)$ thus tells how the tree is split at the root, and
there is a similar pair for each node. 
Then $\cF_j$ is generated by these pairs (\ie, splits) for the nodes
$v_1,\dots,v_j$. 
\end{remark}

Recall that $g(T)$ by \eqref{jg}
depends only on the sizes $|\tl|$ and $|\tr|$. Hence,
$\cF_j$ specifies the value of $g(\cT_{n,v})$ for every $v\in V_j$, and
it follows that
\begin{equation}\label{jepp}
  \E\bigpar{G(\ctn)\mid\cF_j}
= \E\Bigpar{\sum_{v\in \Voo} g(\cT_{n,v})\Bigm|\cF_j} 
= \sum_{v\in V_j} g(\cT_{n,v}) + \sum_{v\in \dV_j} \nu_{|\cT_{n,v}|}.
\end{equation}
Since the sequence of $\gs$-fields $(\cF_j)_0^\infty$ is increasing,
the sequence 
$M_{n,j}:=\E\bigpar{G(\ctn)\mid\cF_j}$, $j\ge0$, is a martingale (for any
fixed $n$). It follows from \eqref{jepp}
that the martingale differences are
\begin{equation}\label{gDM}
\Delta M_{n,j}
:=M_{n,j}-M_{n,j-1}
=g(\cT_{n,v(j)})+\nu_{|\ctnx{v(j)_\sL}|}  +\nu_{|\ctnx{v(j)_\sR}|}
-\nu_{|\ctnx{v(j)}|},    
\end{equation}
where $v(j)_\sL$ and $v(j)_\sR$ are the children of $v(j)$.
It follows easily that, with $\psi_k$ defined in \eqref{psi},
\begin{equation}
  \E\bigpar{|\Delta M_{n,j}|^2\mid\cF_{j-1}}
=
 \E\bigpar{|\Delta M_{n,j}|^2\mid |\cT_{n,v(j)}|}
=\psi_{|\ctnx{v(j)}|}.
\end{equation}
Consequently, the conditional square function is given by
\begin{equation}
  \begin{split}
W_n:=\sumji   \E\bigpar{|\Delta M_{n,j}|^2\mid\cF_{j-1}}
=\sum_{v\in\Voo}\psi_{|\ctnx{v}|}
=\sum_{v\in\ctn}\psi_{|\ctnx{v}|}.
  \end{split}
\end{equation}
(It suffices to sum over $v\in \ctn$, since  $\psi_0=0$.)
This is again a sum of the same type as \eqref{ftv} and \eqref{GH}, for the
random tree $\ctn$.
(Note that the toll function $\psi_{|T|}$ here depends only on the size
of $T$.)
In particular,
\cite[Theorem 3.4]{SJ296} applies 
(in this case we can also use
\cite{Devroye1},
\cite{Devroye2} or
\cite{FlajoletGM1997}); this yields
  \begin{equation}\label{ew}
\E W_n = (n+1) \sum_{k=1}^{n-1} \frac{2}{(k+1)(k+2)}\psi_k + 
\psi_n.
  \end{equation}
If $j$ is large enough, say $j\ge 2^n$, then $V(\ctn)\subseteq V_j$ and thus
$M_{n,j}=G(\ctn)$.  
In particular, $G(\ctn)=M_{n,\infty}$.
Thus, by a standard (and simple) martingale identity,
$\Var G(\ctn)=\Var M_{n,\infty}=\E W_n$; hence \eqref{ew} yields the first
equality in \eqref{varg}. (This is no coincidence; the proof just given of
\eqref{ew} is essentially the same as the proof of 
\cite[Lemma 7.1]{SJ296} that was used in \eqref{varg}, but stated in martingale
formulation.) 

We now split the sum $G(\ctn)$ into two parts, roughly corresponding to
small and large clades. We fix a cut-off $N=N(n)$; for definiteness and
simplicity we choose
$N=N(n):=\sqrt n$, but we note that the arguments below hold with a few
minor modifications for any $N\ge\sqrt n$ with $N=o(\sqrt{n\log n})$.
We then define, for binary trees $T$,
\begin{align}
  g'(T)&:= g(T)\ett{|T|\le N} \label{g'}\\
  g''(T)&:= g(T)\ett{|T|> N}=g(T)-g'(T). \label{g''}
\end{align}
In analogy with \eqref{ftv} and \eqref{GH}, we define further 
\begin{align}
G'(T):=\sum_{v\in T} g'(T_v)  
&&&
\text{and} && G''(T):=\sum_{v\in T} g''(T_v); 
\end{align}
thus 
$G(T)=G'(T)+G''(T)$. We shall see that, asymptotically,  both $G'(\ctn)$ and
$G''(T)$  contribute to the variance with equal amounts,
but nevertheless $G''(\ctn)$ is negligible (in probability).

We begin with the main term $G'(\ctn)$. 
\begin{lemma}\label{LG'}
  As \ntoo,
  \begin{align}
  \Var\bigpar{G'(\ctn)}
&= 2\ga^2 n\log n + O(n), \label{varg'}
\\	
\label{lg'}
	\frac{G'(\ctn)-\E G'(\ctn)}{\sqrt{2\ga^2 n\log n}}
&\dto N(0,1).	
  \end{align}
\end{lemma}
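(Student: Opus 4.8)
The plan is to prove both claims in \refL{LG'} simultaneously via the martingale central limit theorem, mirroring the martingale construction already set up in this section but with the toll function $g'$ in place of $g$. First I would define the truncated martingale $M'_{n,j}:=\E\bigpar{G'(\ctn)\mid\cF_j}$; as in \eqref{jepp}--\eqref{gDM} its differences are $\Delta M'_{n,j}=g'(\cT_{n,v(j)})+\nu'_{|\ctnx{v(j)_\sL}|}+\nu'_{|\ctnx{v(j)_\sR}|}-\nu'_{|\ctnx{v(j)}|}$, where $\nu'_k:=\E G'(\cT_k)$, and again $\E\bigpar{|\Delta M'_{n,j}|^2\mid\cF_{j-1}}=\psi'_{|\ctnx{v(j)}|}$ for an appropriate variance toll $\psi'_k$ depending only on the subtree size. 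The conditional square function is $W'_n=\sum_{v\in\ctn}\psi'_{|\ctnx v|}$, and the standard martingale identity gives $\Var G'(\ctn)=\E W'_n$, which by \cite[Theorem 3.4]{SJ296} (as in \eqref{ew}) is $(n+1)\sumkni\frac{2}{(k+1)(k+2)}\psi'_k+\psi'_n$.

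Next I would establish \eqref{varg'} by analyzing $\psi'_k$. For $k\le N$ we have $g'=g$ on trees of size $k$, so $\psi'_k=\psi_k=2\ga^2 k+O(1)$ by \eqref{psi}; for $k>N$ the toll $g'$ vanishes at the root (since $|T|=k>N$), so $\psi'_k$ reduces to $\frac1k\sum_{j=1}^{k-2}(\nu'_j+\nu'_{k-1-j}-\nu'_k)^2+\frac2k(\nu'_k-1)^2$ or a similar expression; here one uses that $\nu'_k=\nu_k+O(?)$ — more precisely $\nu_k-\nu'_k=\E G''(\cT_k)$, which I expect to be $O(\log(k/N)+1)$ by a separate estimate on $G''$, so that $\psi'_k=O(k)$ uniformly with the contribution of terms $k>N$ to $\E W'_n$ being $O(n)$ (since $\sumkn[N<k]\frac{k}{k^2}\asymp\log(n/N)=O(1)$ for $N=\sqrt n$). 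Summing, $\E W'_n=(n+1)\sumkix{N}\frac{4\ga^2 k+O(1)}{(k+1)(k+2)}+O(n)=(n+1)\sumkix{N}\frac{4\ga^2}{k}+O(n)=4\ga^2 n\log N+O(n)=2\ga^2 n\log n+O(n)$, using $\log N=\tfrac12\log n$. This gives \eqref{varg'}.

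For the asymptotic normality \eqref{lg'}, I would apply the martingale CLT (e.g.\ \cite[Theorem 3.2 or Corollary 3.1]{HallHeyde}) to the triangular array $\bigpar{\Delta M'_{n,j}/\sqrt{\Var G'(\ctn)}}_j$. Two conditions must be checked: (a) the normalized conditional square function $W'_n/\Var G'(\ctn)\pto1$, and (b) a conditional Lindeberg condition. For (a), since we already know $\E W'_n\sim 2\ga^2 n\log n$, it suffices to show $\Var W'_n=o\bigpar{(n\log n)^2}$; because $W'_n$ is itself an additive functional of $\ctn$ with a size-only toll $\psi'_k=O(k\wedge N)$ (bounded by $O(N)=O(\sqrt n)$ effectively, or $O(k)$ for small $k$), \cite[Theorem 3.9]{SJ296} bounds $\Var W'_n=O\bigpar{n\cdot(\text{stuff})}$; one needs to verify this "stuff" is $o(\log^2 n\cdot n)$, i.e.\ $\Var W'_n=o(n^2\log^2 n)$, which should follow comfortably since the toll is at most $O(\sqrt n\,)$ pointwise so crude bounds give $\Var W'_n=O(n^2)\cdot o(1)$ or similar. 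For (b), the key point is that each increment is small: $|\Delta M'_{n,j}|\le |g'(\cT_{n,v(j)})|+|\nu'_{|\cdot_\sL|}+\nu'_{|\cdot_\sR|}-\nu'_{|\cdot|}|$, and when $g'$ is nonzero we need $|\cT_{n,v(j)}|\le N=\sqrt n$ — but actually $g'$ can be nonzero only when one child is empty, so $|g'(\cT_{n,v(j)})|=|1-\nu_{k-1}|\le\ga k+O(1)\le\ga N+O(1)=O(\sqrt n)$; the $\nu'$-difference term is also $O(N)$ by the mean asymptotics when the subtree is small, and is $O(\log(k/N))$-type when large. Thus $\max_j|\Delta M'_{n,j}|=O(\sqrt n\,)=o\bigpar{\sqrt{n\log n}}=o\bigpar{\sqrt{\Var G'(\ctn)}}$ deterministically, which immediately yields the Lindeberg condition.

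The main obstacle I anticipate is part (a), controlling $\Var W'_n$ sharply enough (i.e.\ $o(n^2\log^2 n)$): one must carefully track the size-dependent toll $\psi'_k$, which behaves like $2\ga^2 k$ for $k\le N$ but is cut off / altered for $k>N$, and feed it through the variance bound of \cite[Theorem 3.9]{SJ296}. A secondary subtlety is getting a good enough bound on $\nu_k-\nu'_k=\E G''(\cT_k)$ (logarithmic in $k/N$) to control $\psi'_k$ for $k>N$; this likely needs a short lemma bounding $G''$, which presumably is the content of the next lemma in the paper anyway, so I would either prove it inline or forward-reference it. Everything else — the martingale structure, the identity $\Var G'=\E W'_n$, the deterministic increment bound giving Lindeberg — is routine given the machinery of \refS{SCLT} and \cite{SJ296}.
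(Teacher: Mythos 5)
Your proposal follows the paper's own route essentially step for step: the same martingale $M'_{n,j}=\E(G'(\ctn)\mid\cF_j)$, the same conditional square function $W'_n=\sum_{v\in\ctn}\psi'_{|\ctnx v|}$, the identity $\Var G'(\ctn)=\E W'_n$ via \cite[Theorem~3.4]{SJ296}, the observation $\psi'_k=\psi_k$ for $k\le N$, a bound on $\Var W'_n$ via \cite[Theorem~3.9]{SJ296}, a deterministic $O(\sqrt n)$ bound on the increments to get Lindeberg, and the martingale CLT. Structurally this is exactly what is needed.

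The gap is in your handling of $\psi'_k$ for $k>N$. Two specific errors there: first, $\log(n/N)=\tfrac12\log n$ for $N=\sqrt n$, which is \emph{not} $O(1)$; so the crude bound $\psi'_k=O(k)$ by itself gives a $k>N$ contribution to $\E W'_n$ of order $n\log n$, not $O(n)$, and would ruin the computation. Second, the asserted estimate $\nu_k-\nu'_k=\E G''(\cT_k)=O(\log(k/N)+1)$ is not correct: since $\mu_j=O(1)$ by \eqref{mua}, the formula from \cite[Theorem~3.4]{SJ296} gives $|\E G''(\cT_k)|\le C(k+1)\sum_{j>N}j^{-2}+O(1)=O(k/N)$, not $O(\log(k/N))$. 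Fortunately, the correct $O(k/N)$ bound does close the argument: since $\nu_j+\nu_{k-1-j}-\nu_k=O(1)$ by \refT{Tmean}, one gets $|\gff(T)|=O(|T|/N)$ for $|T|>N$, hence $\psi'_k=O\bigl((k/N)^2\bigr)$, and the $k>N$ contribution to $\E W'_n$ becomes $(n+1)\sum_{N<k\le n}O(N^{-2})=O(n^2/N^2)=O(n)$ as required. This is slightly more roundabout than the paper's route, which computes $\nu'_m=(m+1)\sum_{k\le N}\pkk\mu_k+O(1)$ directly, showing $\nu'_m$ is asymptotically linear in $m$ with an $N$-dependent slope, so that the cancellation $\gff(T)=g'(T)+O(1)$ holds \emph{uniformly} and $\psi'_k=O(1)$ for $k>N$ with no reference to $\nu_k-\nu'_k$ at all. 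With either fix, the remaining steps — the $\Var W'_n=o\bigl((n\log n)^2\bigr)$ bound, the uniform $O(\sqrt n)=o(\sqrt{n\log n})$ increment bound, and the martingale CLT — go through as you describe.
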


\begin{proof}
We define $\nu'_n:=\E G'(\ctn)$.
Note that $g'(T)$  depends only on the sizes 
$|\tl|$ and $|\tr|$.
Hence we can repeat the argument above and define a martingale
$M'_{n,j}:=\E\bigpar{G'(\ctn)\mid\cF_j}$, $j\ge0$, with
$G'(\ctn)=M'_{n,\infty}$ and martingale differences
\begin{equation}\label{blid}
  \Delta M'_{n,j}=\gff(\ctnx{v(j)}),
\end{equation}
where we define, \cf{} \eqref{gDM},
\begin{equation}\label{gff}
  \gff(T):=g'(T)+\nu'_{|\tl|}+\nu'_{|\tr|}-\nu'_{|T|}.
\end{equation}

By \cite[Theorem 3.4]{SJ296} again, \cf{} \eqref{jw} and \eqref{ew},
using $\E g(\ctk)=\mu_k=O(1)$ by \eqref{jeg},
\begin{equation}
  \begin{split}
\nu'_m&=(m+1)\sumkix{m-1}\pkk \E g'(\ctk)+\E g'(\cT_m)
\\&=(m+1)\sumkix{(m-1)\land N}\pkk \E g(\ctk)+O(1)
\\&=(m+1)\sumkix{N}\pkk \mu_k+O(1).
  \end{split}
\end{equation}
Hence, \eqref{gff} yields, after cancellations, 
\begin{equation}\label{gff2}
  \gff(T)=g'(T)+O(1)
=
\begin{cases}
  g(T)+O(1), & |T|\le N,
\\
O(1), & |T|>N.
\end{cases}
\end{equation}

Let 
\begin{equation}\label{psi'}
  \psi'_k:=\E |\gff(\ctk)|^2.
\end{equation}
Then, by \eqref{gff2}, \eqref{jg} and \eqref{nun},
\cf{} \eqref{psi},
\begin{equation}\label{paddington}
\psi'_k
=
\begin{cases}
\E\bigpar{g(\ctk)+O(1)}^2=2\ga^2 k+O(1), & k\le N,
\\
O(1), & k>N.
\end{cases}
\end{equation}

Furthermore, by \eqref{blid} and \eqref{psi'},
\begin{equation}
  \begin{split}
\E\bigpar{|\Delta M'_{n,j}|^2\mid\cF_{j-1}}	
=
\E\bigpar{|\gff(\ctnx{v(j)})|^2\mid |\ctnx{v(j)}|}	
=\psi'_{|\ctnx{v(j)}|}.
  \end{split}
\end{equation}
Hence, 
the conditional square function of $(M'_{n,j})_j$ is 
\begin{equation}\label{magno}
  \begin{split}
W'_n:=\sumji \E\bigpar{|\Delta M'_{n,j}|^2\mid\cF_{j-1}}
=\sum_{v\in\Voo}\psi'_{|\ctnx{v}|}
=\sum_{v\in\ctn}\psi'_{|\ctnx{v}|}.
  \end{split}
\end{equation}
Yet another application of \cite[Theorem 3.4]{SJ296} yields, using
\eqref{paddington}, 
\begin{equation}\label{elfbrink}
  \begin{split}
  \E W_n'&=(n+1)\sumkni\pkk\psi'_k+\psi'_n
  \\&
=(n+1)\sumkix{N}\frac{4\ga^2 k}{(k+1)(k+2)}+O(n) 	
\\&
=4\ga^2 n\log N+O(n) = 2\ga^2 n\log n+O(n).
  \end{split}
\end{equation}

Since $\Var G'(\ctn)= \Var\bigpar{M'_{n,\infty}}=\E W'_n$, 
\eqref{varg'} follows from \eqref{elfbrink}.

Moreover, the representation \eqref{magno} and
\cite[Theorem 3.9]{SJ296} (again summing only to $n$, as we may) yield, 
noting that the toll function $\psi'_{|T|}$
depends only on the size of $T$,
using \eqref{paddington},
\begin{equation}
  \label{anna}
\Var(W_n') 
\le C n \sumkn \frac{(\psi'_k)^2}{k^2}
\le C_1 n \sumkN 1 + 
C_2 n \sumkn \frac{1}{k^2}
=O(nN)=O(n^2).
\end{equation}
Hence, $\Var\bigpar{W'_n/(n\log n)}\to0$ as \ntoo, which together with
\eqref{elfbrink} implies
\begin{equation}\label{olof}
  \frac{W'_n}{n\log n} \pto 2\ga^2.
\end{equation}

Note also that $g(T)=O(|T|)$ by  \eqref{jg} and \eqref{nun}, and thus
\eqref{gff2} implies $\gff(T)=O(N)$ for all trees $T$. Thus \eqref{blid}
yields
\begin{equation}\label{ulla}
\sup_j\frac{\abs{\gD M_{n,j}}}{\sqrt{n\log n}}
=O\Bigparfrac{N}{\sqrt{n\log n}}
=o(1).
\end{equation}

We now apply the central limit theorem for martingale triangular arrays, in
the form in \cite[Corollary 1]{BrownE} (see also \cite[Theorem 3.1]{HH}),
which shows that \eqref{olof} and \eqref{ulla} together imply
\begin{equation}
  	\frac{G'(\ctn)-\E G'(\ctn)}{\sqrt{n\log n}}
=
	\frac{M_{n,\infty}-\E M_{n,\infty}}{\sqrt{n\log n}}
\dto N\bigpar{0,2\ga^2}.
\end{equation}
(Actually, \cite[Corollary 1]{BrownE} assumes instead of \eqref{ulla} only a
conditional Lindeberg condition, which is a trivial consequence of the
uniform bound
\eqref{ulla}.) 
\end{proof}

\begin{remark}
We used the breadth-first order above as just one convenient order. It is
perhaps more natural to consider instead of the sets $V_j$
arbitrary node sets $V$ of (finite)
subtrees of $\Too$ that include the root $o$. This would give us, instead of
$(M_{n,j})_j$, a
martingale indexed by binary trees. However, we have no use for this
exotic object here, and use instead the standard martingales 
above.
\end{remark}

\begin{lemma}
  \label{Llarge}
  \begin{align}
\E |G''(\ctn)| &=O\bigpar{\sqrt n}, \label{lle}\\
\Var(G''(\ctn)) &=2\ga^2 n\log n+O(n).\label{llvar}
  \end{align}
\end{lemma}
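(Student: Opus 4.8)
The plan is to estimate $\E|G''(\ctn)|$ and $\Var(G''(\ctn))$ directly, exploiting the fact that $g''(T)$ is nonzero only when $|T|>N=\sqrt n$ and one of the children subtrees is empty, which is a rare event. First I would bound $\E|G''(\ctn)|$. Since $g''(T_v)\neq 0$ forces $|T_v|>N$ and either $\tl=\emptyset$ or $\tr=\emptyset$, and since by \eqref{jg} and \eqref{nun} we have $|g''(T_v)|=|1-\nu_{|T_v|-1}|=O(|T_v|)$, we get
\begin{equation*}
\E|G''(\ctn)|\le \sum_{v\in\Voo}\E|g''(\cT_{n,v})|
\le C\sum_{v\in\Voo}\E\bigpar{|\cT_{n,v}|\,\ett{|\cT_{n,v}|>N,\ g\text{-condition}}}.
\end{equation*}
Using \cite[Theorem 3.4]{SJ296} (or \eqref{jw}/\eqref{ew}) with the toll function $|g''(T)|$, the sum becomes $(n+1)\sum_{k>N}\pkk\,\E|g''(\ctk)|+\E|g''(\cT_n)|$. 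For $k>N$ we have $\E|g''(\ctk)|=\E|g(\ctk)|=O(1)$ by \eqref{jeg} (the absolute value being $|1-\nu_{k-1}|$ weighted by the probability $2/k$ that a child is empty, exactly as in \eqref{mua}), so the sum is $O\bigpar{(n+1)\sum_{k>N}k^{-2}}+O(n)=O(n/N)+O(n)$. Wait—this only gives $O(n)$, which is too weak. I would instead be more careful: $\E|g''(\ctk)|=\frac2k\E\bigabs{1-\nu_{k-1}}\ett{k>N}=O(1/k)\cdot\ett{k>N}$ since $|1-\nu_{k-1}|=O(k)$, giving $O(1)\cdot\ett{k>N}$; hmm. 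Let me reconsider: actually $\E|g''(\ctk)|$ for $k>N$ equals $\frac2k|1-\nu_{k-1}|=O(1)$, not $O(1/k)$. Then $(n+1)\sum_{N<k<n}\pkk\cdot O(1)=O(n)\sum_{k>N}k^{-2}=O(n/N)=O(\sqrt n)$, and the final term $\E|g''(\cT_n)|=O(1)$; this gives \eqref{lle}.

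For the variance \eqref{llvar}, I would use the same martingale machinery. Since $g''(T)$ depends only on $(|\tl|,|\tr|)$, the sum $G''(\ctn)$ is an additive functional of the form handled by \cite[Lemma 7.1]{SJ296}, and $\Var G''(\ctn)=(n+1)\sum_{k=1}^{n-1}\pkk\,\psi''_k+\psi''_n$ where $\psi''_k:=\E\bigpar{\nu''_{I_k}+\nu''_{k-1-I_k}+g''(k,I_k,k-1-I_k)-\nu''_k}^2$ with $\nu''_k:=\E G''(\ctk)=O(\sqrt k)$ by \eqref{lle}. Computing $\psi''_k$ as in \eqref{psi}: the cross terms $\nu''_j+\nu''_{k-1-j}-\nu''_k$ are $O(\sqrt k)$, so their contribution to $\psi''_k$ is $O(k)/k\cdot k$... more precisely $\frac1k\sum_j O(k)=O(k)$, which is again too weak. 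The correct observation is that $\psi''_k=\psi_k-\psi'_k+O(1)$ by the decomposition $g=g'+g''$, $F=G'+G''$, $\nu=\nu'+\nu''$ (all the $O(1)$ corrections from $\nu''_k=O(\sqrt k)\ll k$), combined with $\psi_k=2\ga^2 k+O(1)$ from \eqref{psi} and $\psi'_k=2\ga^2 k+O(1)$ for $k\le N$, $O(1)$ for $k>N$ from \eqref{paddington}. Hence $\psi''_k=O(1)$ for $k\le N$ and $\psi''_k=2\ga^2 k+O(1)$ for $k>N$. Plugging into the formula: $\Var G''(\ctn)=(n+1)\sum_{N<k<n}\frac{4\ga^2 k+O(1)}{(k+1)(k+2)}+O(n)=(n+1)\sum_{N<k<n}\frac{4\ga^2}{k}+O(n)=4\ga^2 n(\log n-\log N)+O(n)=4\ga^2 n\cdot\tfrac12\log n+O(n)=2\ga^2 n\log n+O(n)$.

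The main obstacle is making the heuristic $\psi''_k\approx \psi_k-\psi'_k$ rigorous: one must verify that replacing $\nu''$ (which satisfies $\nu''_k=O(\sqrt k)$) everywhere introduces only $O(1)$ errors inside the squared expression $\psi''_k$, and that the "$g''$ part" $g''(k,k-1,0)=1-\nu_{k-1}=O(k)$ contributes $\frac2k(\nu''_k - 1 - O(\sqrt k))^2$; expanding, the dominant term is $\frac2k O(k)=O(1)$ when $k>N$ after the cancellation $g''(k,k-1,0)=1-\nu_{k-1}$ pairs with $-\nu_k$ to give $-\nu_k + 1 - \nu_{k-1} + (\text{other }\nu''\text{ terms})$—wait, one must track which $\nu$ versus $\nu''$ appears. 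The cleanest route is to verify directly from the definitions that $\psi''_k=\frac1k\sum_{j=1}^{k-2}(\nu''_j+\nu''_{k-1-j}-\nu''_k)^2+\frac2k(\nu''_k-1)^2\cdot\ett{k>N}+(\text{terms }O(1))$ when $k>N$, and that for $k>N$, $\nu''_k=\E G''(\ctk)$ is in fact close to $\ga k$ (not just $O(\sqrt k)$!) — indeed for $k>N$ the functional $G''$ on $\ctk$ picks up almost all of $G$, so $\nu''_k=\ga k+o(k)$, making $\frac2k(\nu''_k-1)^2=2\ga^2 k+o(k)$. I would therefore first establish the sharper estimate $\nu''_k=\E G''(\ctk)=\ga k+O(\sqrt k)$ for $k\ge 1$ (from $\nu_k=\nu'_k+\nu''_k$, $\nu_k=\ga k+O(1)$, and $\nu'_k=\E G'(\ctk)$; note $\nu'_k=O(\sqrt k)$ is not immediate — rather $\nu'_k\le \nu_k$ and for $k\le N$ one has $G'(\ctk)=G(\ctk)$ so $\nu'_k=\nu_k=\ga k+O(1)$, while for $k>N$, $\nu'_k$ is bounded by the analogous computation giving $\nu'_k=(k+1)\sum_{j\le N}\pxx j\mu_j+O(1)=O(k)$, and more precisely $\nu'_k\to$ a constant multiple of $k$; this needs care). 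Granting $\nu''_k=\ga k+O(\sqrt k)$ the computation above goes through cleanly, and this sharper bound also suffices to upgrade \eqref{lle}'s role in the later arguments.
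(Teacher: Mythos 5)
Your argument for \eqref{lle} matches the paper's: $\E|g''(\ctk)|=O(1)\cdot\ett{k>N}$, plugged into \cite[Theorem 3.4]{SJ296}, gives $O(n/N)+O(1)=O(\sqrt n)$; and your top-level plan for \eqref{llvar} --- the additive-functional formula $\Var G''(\ctn)=(n+1)\sum_k\pkk\psi''_k+\psi''_n$ together with $\psi''_k=O(1)$ for $k\le N$ and $\psi''_k=2\ga^2k+O(1)$ for $k>N$ --- is also exactly the paper's. But your proposed justification for the $\psi''_k$ estimate contains a substantive error. You claim $\nu''_k:=\E G''(\ctk)=\ga k+O(\sqrt k)$ because ``for $k>N$ the functional $G''$ on $\ctk$ picks up almost all of $G$.'' This is false: $G''(\ctk)$ sums $g''$ over the fringe subtrees of $\ctk$, and $g''$ vanishes on trees of size at most $N$; for $k$ just above $N$ almost no fringe subtree of $\ctk$ exceeds $N$ nodes, so $\nu''_k$ is tiny, nowhere near $\ga k$. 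Explicitly, \cite[Theorem 3.4]{SJ296} gives $\nu''_m=0$ for $m\le N$ and $\nu''_m=(m+1)S+O(1)$ for $m>N$, where $S:=\sum_{j>N}\pxx{j}\mu_j=O(1/N)$; hence $\nu''_m=O(m/N)=O(\sqrt n)$ for $m\le n$, far smaller than $\ga m$.

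The $2\ga^2 k$ in $\psi''_k$ for $k>N$ does not come from $\nu''$ at all; it comes from $g''$ itself, namely the value $1-\nu_{k-1}=-\ga k+O(1)$ of $g''$ on a tree of size $k>N$ with one child empty, where the original $\nu$ appears because $g''$ is a truncation of $g$ and $g$ is defined through $\nu$ in \eqref{jg}. Your term $\frac2k(\nu''_k-1)^2$ has the wrong $\nu$; the relevant term is $\frac2k\bigpar{1-\nu_{k-1}+\nu''_{k-1}-\nu''_k}^2$. What is actually needed from $\nu''$ is the cancellation $\nu''_{|\tl|}+\nu''_{|\tr|}-\nu''_{|T|}=O(1)$, so that $g''(T)+\nu''_{|\tl|}+\nu''_{|\tr|}-\nu''_{|T|}=g''(T)+O(1)$, which is what yields \eqref{padd}. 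This cancellation follows from the approximate affine structure $\nu''_m=(m+1)S+O(1)$ (for $m>N$; $\nu''_m=0$ for $m\le N$) together with $NS=O(1)$ --- the same mechanism as in the derivation of \eqref{gff2}, with $\sum_{j>N}$ in place of $\sum_{j\le N}$. A magnitude bound $\nu''_k=O(\sqrt k)$ alone does not imply this (an erratic $\nu''$ of that size could give $O(\sqrt k)$ discrepancies), so the parenthetical in your middle paragraph attributing the $O(1)$ corrections to ``$\nu''_k=O(\sqrt k)\ll k$'' is not a valid argument, and your proposed ``cleanest route'' rests on a false estimate.
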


\begin{proof}
  By \eqref{g''}, \eqref{jg} and \eqref{jeg}, 
  \begin{equation}
\E|g''(\ctk)|
=|\E g(\ctk)|\cdot\ett{k>N}	
= O(1)\cdot\ett{k>N}
  \end{equation}
and thus, using the triangle inequality and \cite[Theorem 3.4]{SJ296},
\begin{equation*}
  \begin{split}
\E |G''(\ctn)| \le (n+1) \sum_{N}^{n-1}\pkk\E |g''(\ctk)|+\E|g''(\ctn)|
=O\Bigparfrac{n}{N}
,  \end{split}
\end{equation*}
yielding \eqref{lle}.

For the variance, we use either 
\cite[Theorem 1.29]{SJ296} 
as in the proof of \refT{TVF}, or the (essentially equivalent) martingale
argument in \eqref{blid}--\eqref{elfbrink} and conclude that, with
some $\psi''_k$ satisfying
\begin{equation}\label{padd}
\psi''_k
=
\begin{cases}
O(1), & k\le N,
\\
\E\bigpar{g(\ctk)+O(1)}^2=2\ga^2 k+O(1), & k>N,
\end{cases}
\end{equation}
we have
\begin{equation*}
  \begin{split}
\Var G''(\ctn)& = (n+1)\sumkni\frac{2}{(k+1)(k+2)}\psi''_k+\psi''_n
\\&	
= (n+1)\sum_{k=\floor N+1}^{n-1}\frac{4\ga^2 k}{k^2}+O(n)
\\&
=4\ga^2 n\log (n/N)+O(n)
=2\ga^2 n\log n+O(n). 
\qedhere
  \end{split}
\end{equation*}
\end{proof}

\begin{proof}[Proof of \refT{TCLT}]
  It follows from \eqref{lle} that
  \begin{equation}
	\frac{G''(\ctn)-\E G''(\ctn)}{\sqrt{2\ga^2 n\log n}}
\pto 0,	
  \end{equation}
which together with \eqref{lg'} yields 
  \begin{equation}\label{asnG}
	\frac{G(\ctn)-\E G(\ctn)}{\sqrt{2\ga^2 n\log n}}
\dto N(0,1).	
  \end{equation}

Similarly, \eqref{lgVH} implies
\begin{equation}
\frac{H(\ctn)-\E H(\ctn)}{\sqrt{2\ga^2 n\log n}}\pto0,  
\end{equation}
which together with \eqref{asnG} yields \eqref{tclt}, recalling
$X_n=F(\ctn)=G(\ctn)+H(\ctn)$ by \eqref{FGH}.
\end{proof}

\begin{proof}[Proof of \refT{Tsmall}]
(i).
Define, similarly to \eqref{g'}--\eqref{g''},
\begin{align}
  f'(T):= f(T)\ett{|T|\le N},  &&&
  f''(T):= f(T)\ett{|T|> N}, \label{f'}
\\
  h'(T):= h(T)\ett{|T|\le N}, &&&
  h''(T):= h(T)\ett{|T|> N} \label{h'},
\end{align}
  and corresponding sums 
$  F'(T):=\sum_{v\in T}f'(T_v)$ 
and similarly $F''(T)$, $H'(T)$, $H''(T)$.
The argument in \eqref{FT}--\eqref{ftv} is easily modified and shows that
\begin{equation}\label{FGH'}
  X_n^N=F'(\ctn)=G'(\ctn)+H'(\ctn).  
\end{equation}

The same proof as for \refL{LG} yields also
\begin{align}\label{mackmyra}
  \Var H'(\ctn)=O(n)&&& \text{and}&& \Var H''(\ctn)=O(n).  
\end{align}
Hence, \eqref{tsmall} follows from \refL{LG'} and \eqref{FGH'}.

Furthermore,
\begin{align}\label{FGH''}
  X_n-X_n^N = F''(\ctn)=G''(\ctn)+H''(\ctn).
\end{align}
By \eqref{FGH'} and \eqref{FGH''},  
\eqref{tsmallar} follows from \eqref{varg'} and \eqref{llvar}, 
using \eqref{mackmyra} and  Minkowski's inequality.
Similarly,
\begin{equation}
  \E|X_n-X_n^N| \le \E|G''(\ctn)|+\E|H''(\ctn)| = O(\sqrt n),
\end{equation}
using \eqref{lle}, \eqref{mackmyra} and \Holder's inequality, together with
$\E H''(\ctn)=0$, which is proved as \eqref{lgH}.

(ii).
The conclusions of (i) hold by the same proofs (with some minor
modifications in some estimates).

Moreover, let $Z_{n,k}$ be the number of clades of size $k+1$.
Then, for $n\ge2$,  the expected number is given by
\begin{equation}
  \E Z_{n,k} = 
  \begin{cases}
	\frac{4n}{k(k+1)(k+2)}, & k<n, \\
\frac{2}{n}, & k=n, \\
0, & k>n,
  \end{cases}
\end{equation}
see \cite[Theorem 1]{ChangF10}. (This can be seen as another example of
\cite[Theorem 3.4]{SJ296}.)
Consequently,
\begin{equation}
  \begin{split}
\PP(X_n\neq X_n^N)
&\le \PP\Bigpar{\sum_{k>N} Z_{n,k} \ge1}	
\\&
\le \E\sum_{k>N} Z_{n,k} 
=
\sum_{\floor N+1}^{n-1}\frac{4n}{k(k+1)(k+2)}+\frac2n
\\&
=O\Bigparfrac{n}{N^2}  + O\Bigparfrac1{n} =o(1),
  \end{split}
\end{equation}
which completes the proof.
\end{proof}

\section{Higher moments}\label{Smom}

We begin the proof of \refT{Tp} by proving a weaker estimate.
We let $\normp{X}:=(\E X^p)^{1/p}$ for any random variable $X$.
Recall that $\nu_n:=\E F(\ctn)$.
\begin{lemma}\label{Lxp}
  For any fixed real $p>2$, and all $n\ge1$,
  \begin{equation}\label{lxp}
\E \bigabs{F(\ctn)-\nu_n}^p
\le C(p) n^{p-1}.
  \end{equation}
Equivalently,
\begin{equation}\label{lxp2}
  \bignorm{F(\ctn)-\nu_n}_p 
= O(n\qpp).
\end{equation}
\end{lemma}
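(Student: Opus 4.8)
The plan is to prove \eqref{lxp} by the same recursive/additive-functional strategy used for the variance, but now carried out at the level of $p$-norms via Minkowski's inequality. Write $F(\ctn)=G(\ctn)+H(\ctn)$ as in \eqref{FGH}, and then split $G=G'+G''$ with the cut-off $N=\sqrt n$. Since $G'(\ctn)=M'_{n,\infty}$ is the terminal value of the martingale $(M'_{n,j})_j$ with differences $\gff(\ctnx{v(j)})$, and since $\gff(T)=O(N)$ uniformly while the conditional square function satisfies $W'_n=O(nN)=O(n\qqc)$ in $L^1$ (indeed in $L^2$ by \eqref{anna}), the Burkholder--Rosenthal inequality gives
\begin{equation*}
\bignorm{G'(\ctn)-\E G'(\ctn)}_p
\le C(p)\Bigpar{\bignorm{(W'_n)\qq}_p + \bignorm{\sup_j|\gD M'_{n,j}|}_p}
= O\bigpar{(n\qqc)\qq} + O(N)\cdot O(1) = O(n\qqqqc),
\end{equation*}
which is $o(n\qpp)$, so $G'(\ctn)$ is harmless. (One must check $\norm{(W'_n)\qq}_p=O(n\qqqqc)$; since $W'_n\le Cn\sum_{v\in\ctn}\ett{|\ctnv|\le N}$ and this is a nonnegative additive functional with bounded toll, its $p/2$-norm is $O(nN)$ by \cite[Theorem 3.4]{SJ296}-type bounds, or even just $W'_n\le Cn\cdot|\ctn|\le Cn^2$ pointwise suffices for the crude bound $O(n)$.) Similarly, $H(\ctn)$ has $\Var H(\ctn)=O(n)$ by \eqref{lgVH}, but for $p>2$ we need an $L^p$ bound; I would get this the same way, writing $H(\ctn)=\sum_v h(\ctnv)$ as a martingale with differences controlled by $\nu_{|\ctnv|}-F(\ctnv)$, using that $|h(T)|\le|T|$ and $\PP(h(T)\neq0)$ is governed by the root being half-external, so that $\norm{H(\ctn)}_p=O(\sqrt n\,\log n)=o(n\qpp)$; in fact even the crude $|H(\ctn)|\le\sum_v|h(\ctnv)|$ combined with \eqref{ftv}-type moment bounds suffices here, since we only need $o(n\qpp)$.

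The main term is $G''(\ctn)=\sum_{v\in\ctn}g''(\ctnv)$, the sum over the (rare) large subtrees. Here I would not try to be delicate: I would just bound $|G''(\ctn)|\le\sum_{v}|g(\ctnv)|\,\ett{|\ctnv|>N}$ and, recalling $g(T)=O(|T|)$, note each summand is $O(|\ctnv|)=O(n)$, so $|G''(\ctn)|\le n\cdot Z$ where $Z:=\#\{v\in\ctn:|\ctnv|>N\}$ is the number of subtrees of size exceeding $\sqrt n$. It is classical (and follows from \cite[Theorem 3.4]{SJ296}, or directly from $\E Z_{n,k}$ in the proof of \refT{Tsmall}(ii)) that $\E Z = O(\sqrt n)$ and, more to the point, $\E Z^p=O(\sqrt n)$ as well — because $Z$ counts subtrees along a short initial path: $Z\le$ (depth of the first ancestor $v$ with $|\ctnv|\le N$), and the number of nodes $v$ with $|\ctnv|>N$ forms essentially a subtree near the root whose size has all moments bounded (each such $v$ has a child-split that keeps $>N$ nodes on a side, an event of probability $\approx 1$ only $O(\log(n/N))$-deep but with geometric tails on the count; alternatively $Z=\sum_{k>N}Z_{n,k}$ and $\E Z_{n,k}^p$ can be controlled as in \cite{SJ296}). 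Hence $\norm{G''(\ctn)}_p\le n\,\norm Z_p=n\cdot O(n^{1/(2p)})=O(n^{1+1/(2p)})$, which is $o(n\qpp)$ for $p$ large but NOT for $p$ close to $2$. So this crude bound is not enough near $p=2$, and the refinement needed is: on the event $\{Z\ge1\}$, a single large subtree of size $m>N$ already accounts for the bulk, and $|g''(\ctnv)|=|1-\nu_{|\ctnv|-1}|=O(|\ctnv|)$ with $|\ctnv|\le n$, but there is at most one maximal such $v$ on each root-to-leaf segment, so actually $\sum_v|g''(\ctnv)|\le C\sum_{v:\,|\ctnv|>N}|\ctnv|$ telescopes badly; the correct estimate is $\norm{G''(\ctn)}_p=O(n\qpp)$ directly from \cite[Theorem 3.4]{SJ296} applied with toll $g''$, since that theorem expresses $\E|{\sum_v g''(\ctnv)}|^p$ (or rather controls it via a recursion) and the toll $\E|g''(\ctk)|^p=O(k^{p})\ett{k>N}$ feeds into $\sum_{k>N}\frac{n}{k(k+1)(k+2)}k^{p}\asymp n\cdot n^{(p-1)/2}\cdot$(stuff) — one checks this is $O(n^{p}\cdot N^{p-3})=O(n^{p}n^{(p-3)/2})$, hmm, so one must verify the arithmetic gives exactly $n^{p-1}$; I expect the honest route is the recursion of \cite[Theorem 3.8 or its moment version]{SJ296} with $\normp{f''(\ctk)}\le\bignorm{f(\ctk)}_p\,\PP(|\ctk|>N)^{1/p}$-type inputs plus the a priori bound $\normp{F(\ctk)-\nu_k}\le k$ (from $|F-\nu|\le k$), solved by the substitution $b_n:=\normp{F(\ctn)-\nu_n}/n\qpp$ and showing $b_n$ is bounded.

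Concretely, I would set $b_n:=\normp{F(\ctn)-\nu_n}$ and derive from \eqref{FT} the recursive inequality
\begin{equation*}
b_n \le \normp{F(\ctnl)-\nu_{|\ctnl|}} + \normp{F(\ctnr)-\nu_{|\ctnr|}} + \normp{g(\ctn)+\nu_{|\ctnl|}+\nu_{|\ctnr|}-\nu_n} + (\text{error from } H),
\end{equation*}
where, conditioning on $I_n$, the first two terms contribute $\frac2n\sum_{k=0}^{n-1}\normp{F(\ctk)-\nu_k}$-type sums (with a cross-correlation term handled by the independence of the two subtrees given $I_n$, so that the $p$-norm of the sum is bounded by the sum of $p$-norms — Minkowski — but one can also exploit conditional independence for a sharper bound), and the "toll" term is $O(1)$ for $|\ctn|\le n$ in expectation but $O(n)$ only on the probability-$O(1/n)$ event that the root is half-external, giving toll contribution $\normp{\text{toll}}=O(n^{1/p})\cdot O(1)+O(n)\cdot O(n^{-1/p})=O(n\qpp)$. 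This yields $b_n\le \frac2n\sum_{k<n}b_k + C n\qpp$; plugging the ansatz $b_k\le A k\qpp$ and using $\frac2n\sum_{k<n}k\qpp\le k\qpp\cdot\frac{2}{1-1/p}\cdot\frac{n\qpp}{\,}$... one sees the recursion is subcritical precisely because $\sum_{k<n}k\qpp\sim \frac{n^{2-1/p}}{2-1/p}$ so $\frac2n\sum_{k<n}k\qpp\sim\frac{2}{2-1/p}n\qpp<A n\qpp$ once $A$ is large, closing the induction. The main obstacle is bookkeeping the toll term's $L^p$-norm correctly — showing that the rare large-clade event contributes exactly at the $n\qpp$ scale and no larger, which is where the hypothesis $p>2$ (equivalently $1-1/p>1/2$) is used so the geometric-type sum $\sum_{k>N}k^{p}\cdot\frac{n}{k^3}$ stays $O(n^{p-1})$ after accounting for $N=\sqrt n$: indeed $\sum_{N<k<n}n k^{p-3}\asymp n\cdot n^{p-2}=n^{p-1}$ since $p-2>0$ makes the sum dominated by $k\approx n$. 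That last observation — that $p-3$ can be negative but $p-2>0$ still makes $\sum n k^{p-3}$ of order $n^{p-1}$ — is the crux, and it is exactly why the clean estimate holds for all $p>2$.
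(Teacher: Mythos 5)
The core difficulty here is exactly the one you notice but do not resolve: the large-subtree term $G''$ carries the full $n^{p-1}$ weight, so every subordinate term must be bounded by a genuine argument, and your one-level Minkowski recursion does not close. Take your recursion seriously: Minkowski applied to $F(\ctn)-\nu_n = (F(\ctnl)-\nu_{|\ctnl|}) + (F(\ctnr)-\nu_{|\ctnr|}) + \text{toll}$ gives
\[
b_n \le 2\Bigpar{\tfrac1n\textstyle\sum_{k<n} b_k^p}^{1/p} + \normp{\text{toll}},
\]
not $\frac2n\sum_{k<n}b_k$; and plugging $b_k\le Ak^{1-1/p}$ yields a main term $\frac{2A}{p^{1/p}}\,n^{1-1/p}$. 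Since $p^{1/p}\le e^{1/e}<2$ for every $p>0$, the coefficient $2/p^{1/p}$ is \emph{always} greater than $1$, and the induction does not close, no matter how large you take $A$. This is the fundamental obstruction: Minkowski's inequality is too wasteful for a sum of mean-zero, conditionally independent subtree contributions.

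The paper gets around this in two ways that your write-up does not hit. First, instead of Minkowski it uses Rosenthal's inequality, which gives $\E\bigl|\sum_v X_v\bigr|^p \lesssim \sum_v\E|X_v|^p + \bigl(\sum_v\E X_v^2\bigr)^{p/2}$ for conditionally independent mean-zero $X_v$; the additive-in-$p$-th-moment first term is what produces a factor $2/p<1$ rather than $2/p^{1/p}>1$. Second, it decomposes not at depth $1$ but at a fixed depth $m$: the $2^m$ boundary subtrees satisfy $\E|\ctnv|^{p-1}\le n^{p-1}(1/p)^m$, so the Rosenthal main term picks up a factor $(2/p)^m$, which can be made $<1/2$ (absorbing the Rosenthal constant) by choosing $m$ large \emph{precisely because} $p>2$. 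One then closes an induction in $n$. Your $G'+G''+H$ branch has a separate gap: you never obtain a valid $O(n^{1-1/p})$ bound for $\normp{G''(\ctn)}$ (you correctly note your crude attempt fails as $p\downarrow2$, and \cite[Theorem~3.4]{SJ296} controls the mean, not the $p$-th moment of the sum); also your Burkholder--Rosenthal estimate $\normp{G'-\E G'}=O(n^{3/4})$ is only $o(n^{1-1/p})$ when $p>4$, so even that branch would need tightening to $O(\sqrt{n\log n})$ for $2<p\le4$. Your heuristic observations (the exponent arithmetic $\sum_{N<k<n}nk^{p-3}\asymp n^{p-1}$ for $p>2$, and that one large clade near the root dominates the moment) are correct and are indeed what drives Lemma~\ref{Lbb} and the later proof of Theorem~\ref{Tp}, but they do not by themselves yield the a~priori bound \eqref{lxp}, which is what the $m$-level Rosenthal induction supplies.
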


\begin{proof}
  Fix $p>2$ and let $m\ge1$ be chosen below. (The constants $C_i$ below may
  depend on $p$ but not on $m$.)
Let $V_j$ and $\cF_j$ be as in \refS{SCLT}, and write $V'_m:=V_{2^m-1}$,
$\cF'_m:=\cF_{2^m-1}$.
Thus $\dV'_m$ consists of the $2^m$ nodes in $\Too$ of depth $m$, and $V_m'$
consists of the $2^m-1$ nodes of smaller depth.
It follows from \eqref{ftv} that, for any binary tree $T$,
\begin{equation}\label{xa}
F(T)=\sum_{v\in V'_m} f(T_v) + \sum_{v\in \dV'_m} F(T_v).
\end{equation}
Furthermore, by \eqref{tmean}, 
\begin{equation}\label{xb}
  \begin{split}
  \sum_{v\in\dV'_m}\nu_{|T_v|} 
&=  \sum_{v\in\dV'_m}\bigpar{\ga {|T_v|} +O(1)}
=  \ga\sum_{v\in\dV'_m}|T_v| +O(2^m) 
\\&
=\ga|T| +O(2^m) 	
=\nu_{|T|} +O(2^m).
  \end{split}
\end{equation}
Hence, by combining \eqref{xa} and \eqref{xb},
\begin{equation}\label{isis}
F(T)-\nu_{|T|}
=\sum_{v\in V'_m} f(T_v) + \sum_{v\in \dV'_m} \bigpar{F(T_v)-\nu_{|T_v|}}
+O(2^m).
\end{equation}

We shall use this decomposition for the \bst{} $\ctn$.
Note first that by \eqref{max}--\eqref{maa},
\begin{equation}\label{osiris}
  \E |f(\ctn)|^p \le n^p\PP\bigpar{f(\ctn)\neq0} \le 2n^{p-1}.
\end{equation}
(This holds for any $p>0$ and generalises \eqref{mua} which is the case $p=1$.)
Hence, for any $v\in \Voo$,
\begin{equation}
  \E\bigpar{|f(\ctnv)|^p\bigm||\ctnv|}
\le 2|\ctnv|^{p-1}\le 2n^{p-1},
\end{equation}
and thus 
\begin{equation}\label{tor}
  \E|f(\ctnv)|^{p}
\le 2n^{p-1}.
\end{equation}
Let $Y:=\sum_{v\in V'_m}f(\ctnv)$ be the first sum in \eqref{isis} for $T=\ctn$.
By Minkowski's inequality and \eqref{tor},
\begin{equation}\label{oden}
  \norm{Y}_p
\le \sum_{v\in V_m'}\norm{f(\ctnv)}_p
\le 2^m 2^{1/p} n^{(p-1)/p}.
\end{equation}

Let
$Z:=\sum_{v\in\dV_m'}\bigpar{F(\ctnv)-\nu_{|\ctnv|}}$
be the second sum in \eqref{isis} for $T=\ctn$.
The $\gs$-field $\cF'_m$ specifies the sizes of the subtrees $\ctnv$
for $v\in\dV_m'$, and conditioned on $\cF'_m$, these subtrees are
independent and distributed as $\cT_{n(v)}$ of the given sizes $n(v)$. Hence, 
conditionally on $\cF_m'$, 
the terms in the sum $Z$ are independent and have means zero,
so we can apply Rosenthal's inequality 
\cite[Theorem 3.9.1]{Gut}, which yields
\begin{multline}\label{manne}
\E\bigpar{|Z|^p\mid\cF_m'}
\le \CCname{\CCmanne}\sumvdv\E \bigpar{\abs{F(\ctnv)-\nu_{|\ctnv|}}^p\mid\cF_m'}
\\
+ \CCx\Bigpar{\sumvdv\E \bigpar{\abs{F(\ctnv)-\nu_{|\ctnv|}}^2\mid\cF_m'}}^{p/2}
.  
\end{multline}
We note first that by \eqref{Xvar},
\begin{equation}
  \begin{split}
\E \bigpar{\abs{F(\ctnv)-\nu_{|\ctnv|}}^2\mid\cF_m'}
\le \CC |\ctnv|\log|\ctnv|	
\le \CCx |\ctnv|\log n,
  \end{split}
\end{equation}
and thus
\begin{equation}
  \begin{split}
\sumvdv\E \bigpar{\abs{F(\ctnv)-\nu_{|\ctnv|}}^2\mid\cF_m'}
\le \CCx \sumvdv  |\ctnv|\log n
\le \CCx n\log n.
  \end{split}
\end{equation}
Hence the second term on the \rhs{} in \eqref{manne} is 
$\le \CC (n\log n)^{p/2}$. Taking the expectation in \eqref{manne} we thus
obtain
\begin{equation}\label{frej}
  \E|Z|^p
\le \CCmanne \sumvdv\E {\abs{F(\ctnv)-\nu_{|\ctnv|}}^p}
+\CCname{\CCfrej}(n\log n)^{p/2}.
\end{equation}

Let $A_n:=\E|F(\ctn)-\nu_n|^p$. 
We can write \eqref{isis} for $T=\ctn$ as 
\begin{equation}\label{osis}
  F(\ctn)-\nu_n=Y+Z+O(2^m).
\end{equation}
Thus, by  Minkowski's inequality, \eqref{oden}
and \eqref{frej},
\begin{equation}\label{rom}
  \begin{split}
A_n& =\E\bigabs{Y+Z+O(2^m)}^p
\le 3^p\bigpar{\E |Y|^p+\E|Z|^p+O(2^m)}
\\&
\le \CC 2^{mp}n^{p-1}
+\CCname{\CCz}\E|Z|^p+\CC 2^m
\le 
\CCz\E|Z|^p
+\CCname{\CCrom} 2^{mp}n^{p-1}.
  \end{split}
\end{equation}
Furthermore, \eqref{frej} can be written
\begin{equation}\label{freja}
 \E|Z|^p
\le \CCmanne \sumvdv\E A_{|\ctnv|}
+\CCfrej(n\log n)^{p/2}.
\end{equation}

We prove the lemma by induction, and assume that 
$A_k\le Ck^{p-1}$ for all $k<n$. 
Since $|\ctnv|<n$ for every $v\in\dV_m'$, \eqref{freja} and the inductive
hypothesis yield
\begin{equation}\label{vidar}
 \E|Z|^p
\le \CCmanne C\sumvdv\E |\ctnv|^{p-1}
+\CCfrej(n\log n)^{p/2}.
\end{equation}
If $v$ is a child of the root, then $|\ctnv|$ is uniformly distributed on
\set{0,\dots,n-1}, so $|\ctnv|\eqd\floor{nU}\le nU$, where $U\sim U(0,1)$ is
uniformly distributed on $\oi$. By induction in $m$, it follows that for any
$v\in \dV_m'$,
\begin{equation}
  |\ctnv|\le n\prodim U_i,
\end{equation}
with $U_1,\dots,U_m$ independent and $U(0,1)$.
Consequently,
\begin{equation}\label{scott}
\E  |\ctnv|^{p-1} 
\le\E\Bigpar{ n^{p-1} \prodim U_i^{p-1}}
= n^{p-1} \prodim \E U_i^{p-1}
=  n^{p-1} (1/p)^{m},
\end{equation}
since $\E U_i^{p-1}=\intoi u^{p-1}\dd u=1/p$.
There are $2^m$ nodes in $\dV_m'$, and thus \eqref{vidar} yields 
\begin{equation}\label{balder}
 \E|Z|^p
\le \CCmanne C 2^m(1/p)^m n^{p-1}
+\CCfrej(n\log n)^{p/2},
\end{equation}
which together with \eqref{rom} yields, since $(n\log n)^{p/2}=O(n^{p-1})$
when $p>2$,
\begin{equation}\label{njord}
  \begin{split}
A_n 
&\le 
\CCz\CCmanne C(2/p)^m n^{p-1}
+\CCz\CCfrej(n\log n)^{p/2}
+{\CCrom} 2^{mp}n^{p-1}
\\
&\le 
\CCz\CCmanne C(2/p)^m n^{p-1}
+\CCname\CCnjord 2^{mp}n^{p-1}.
  \end{split}
\end{equation}

Now choose $m$ such that $(2/p)^m\CCz\CCmanne<1/2$ (which is possible
because $p>2$). Then choose $C:=2^{mp+1}\CCnjord$. With these choices,
\eqref{njord} yields
\begin{equation}
  A_n\le\tfrac12 C n^{p-1} +  \tfrac12 C n^{p-1}=  C n^{p-1}.
\end{equation}
In other words, we have proved the inductive step: 
$A_k\le C k^{p-1}$ for $k<n$ implies $A_n\le C n^{p-1}$.
Consequently, this is true for all $n\ge0$, \ie, \eqref{lxp} holds.
(The initial cases $n=0$ and $n=1$ are trivial, since $A_0=A_1=0$.)
\end{proof}

\begin{lemma}
  \label{L6}
For any fixed real $p>2$, as \ntoo,
\begin{align}
  \normp{F(\ctn)}& \sim \ga n ,\label{l6F}
\\
  \norm{f(\ctn)}_p& \sim 2\qp\ga n\qpp. \label{l6f}
\end{align}
\end{lemma}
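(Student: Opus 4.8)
The plan is to compute the $L^p$-asymptotics of $F(\ctn)$ by separating the ``typical'' contribution (where $F(\ctn)$ is close to its mean $\nu_n \sim \ga n$) from the rare ``exceptional'' events where the root of $\ctn$ (or one of its first few ancestors) has an empty child, producing a clade of size $\Theta(n)$. First I would observe that by \refT{Tmean}, $\nu_n = \ga n + O(1)$, so $\normp{F(\ctn)} \le \nu_n + \norm{F(\ctn)-\nu_n}_p = \ga n + O(n^{1-1/p}) = \ga n + o(n)$ by \refL{Lxp}; this immediately gives the upper bound direction of \eqref{l6F}, and in fact $\normp{F(\ctn)} = \nu_n + O(n^{1-1/p}) \sim \ga n$. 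So \eqref{l6F} is really just a restatement of \refL{Lxp} together with \refT{Tmean}, and essentially no new work is needed there.

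The substance is \eqref{l6f}. Here I would use the recursion/toll structure: conditioning on the split $(I_n, n-1-I_n)$ at the root, $f(\ctn) \ne 0$ exactly when one child is empty, which happens with probability $2/n$, and on that event (say $\ctnl = \emptyset$) we have $f(\ctn) = 1 - F(\ctnr)$ where $\ctnr \eqd \cT_{n-1}$. Therefore
\begin{equation}\label{l6fcomp}
  \E|f(\ctn)|^p = \tfrac2n \E\bigabs{1 - F(\cT_{n-1})}^p.
\end{equation}
Now I would write $1 - F(\cT_{n-1}) = -(F(\cT_{n-1}) - \nu_{n-1}) - (\nu_{n-1} - 1)$, note $\nu_{n-1} - 1 = \ga n + O(1)$, and apply Minkowski's inequality together with \refL{Lxp}: the fluctuation term has $L^p$-norm $O(n^{1-1/p}) = o(n)$, so $\norm{1 - F(\cT_{n-1})}_p = \ga n + o(n) \sim \ga n$, hence $\E\bigabs{1-F(\cT_{n-1})}^p \sim (\ga n)^p$. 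Plugging into \eqref{l6fcomp} gives $\E|f(\ctn)|^p \sim \tfrac2n (\ga n)^p = 2\ga^p n^{p-1}$, i.e.\ $\norm{f(\ctn)}_p \sim 2^{1/p}\ga n^{1-1/p}$, which is exactly \eqref{l6f}.

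The only mildly delicate point is making sure the error terms are genuinely negligible at the $L^p$ level rather than just in expectation: this is precisely where \refL{Lxp} is essential, since it controls $\norm{F(\ctn)-\nu_n}_p = O(n^{1-1/p})$ uniformly, and $n^{1-1/p} = o(n)$ because $p > 2$ (indeed any $p > 1$ would suffice for this lemma alone). So I do not expect a real obstacle; the main thing to be careful about is bookkeeping the two symmetric cases ($\ctnl = \emptyset$ versus $\ctnr = \emptyset$) correctly — they contribute equally, accounting for the factor $2$ — and handling the trivial small cases ($n = 1$, where $f(\cT_1) = 1$, and the $O(1)$ terms) so that the claimed asymptotic equivalences hold as stated.
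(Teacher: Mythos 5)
Your proposal is correct and follows essentially the same route as the paper: Minkowski's inequality together with \refL{Lxp} and \refT{Tmean} gives \eqref{l6F}, and then the exact formula $\E|f(\ctn)|^p=\tfrac2n\E|1-F(\cT_{n-1})|^p$ combined with \eqref{l6F} gives \eqref{l6f}. The only cosmetic difference is that you spell out the Minkowski decomposition of $\norm{1-F(\cT_{n-1})}_p$ where the paper simply observes that \eqref{l6F} implies $\normp{F(\ctn)-1}\sim\ga n$.
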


\begin{proof}
By Minkowski's inequality, \eqref{lxp2} and \eqref{tmean},
\begin{equation}
  \bignorm{F(\ctn)}_p =\bigabs{\E F(\ctn)}+ O(n\qpp)
=\ga n+ O(n\qpp)\sim \ga n,
\end{equation}
which is \eqref{l6F}.

For $n\ge2$, it follows from \eqref{f} that
\begin{equation}\label{halma}
  \E|f(\ctn)|^p = \frac{2}n\E|1-F(\ctx{n-1})|^p
= \frac{2}n\norm{F(\ctx{n-1})-1}_p^p
\sim 2\ga^p n^{p-1},
\end{equation}
since \eqref{l6F} obviously implies also $\normp{F(\ctn)-1}\sim\ga n$.
\end{proof}

The idea in the proof of \refT{Tp} is to approximate
$\E|X_n-\E X_n|^p=\E\bigabs{\sum_v\bigpar{f(\ctnv)-\E f(\ctnv)}}^p$
by $\E\sum_v\bigabs{f(\ctnv)-\E f(\ctnv)}^p$, or simpler by
$\E\sum_v\bigabs{f(\ctnv)}^p=\sum_v\E\bigabs{f(\ctnv)}^p$. 
The heuristic reason for this is that the
moment $\E\bigabs{\sum_v\bigpar{f(\ctnv)-\E f(\ctnv)}}^p$ is dominated by
the event when there is one large term (corresponding to one large clade,
\cf{} the discussion before \refT{Tp}), and then 
\begin{equation}\label{app}
\Bigabs{\sum_v\bigpar{f(\ctnv)-\E f(\ctnv)}}^p
\approx \sum_v\bigabs{f(\ctnv)-\E f(\ctnv)}^p
  \approx \sum_v\abs{f(\ctnv)}^p.
\end{equation}

We shall justify this in several steps. We begin by finding the expectation
of the final sum in \eqref{app}, \cf{} the sought result  \eqref{tp}.
\begin{lemma}
  \label{Lbb} As \ntoo,
  \begin{equation}\label{lbb}
\E \sumvtn\abs{f(\ctnv)}^p\sim
 \frac{2p}{p-2}\ga^p n^{p-1}.	
  \end{equation}
\end{lemma}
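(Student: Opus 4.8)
The plan is to recognize the left-hand side of \eqref{lbb} as the mean of an additive functional of $\ctn$, apply the standard formula for such means, and then evaluate the resulting series; the only genuinely non-routine point will be that a single boundary term contributes at leading order.

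First, $T\mapsto|f(T)|^p$ is a toll function (it vanishes on $\emptyset$, since $f(\emptyset)=0$), and $\sumvtn|f(\ctnv)|^p$ is the associated additive functional. Hence \cite[Theorem 3.4]{SJ296}, exactly as used in \eqref{jw} and \eqref{ew}, gives
\begin{equation*}
\E\sumvtn|f(\ctnv)|^p
=(n+1)\sumkni\frac{2}{(k+1)(k+2)}\,\E|f(\ctk)|^p+\E|f(\ctn)|^p ,
\end{equation*}
all expectations being finite since $|f(\ctk)|\le k$ by \eqref{maa}.

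Next I would insert the asymptotics $\E|f(\ctk)|^p=\norm{f(\ctk)}_p^p\sim2\ga^p k^{p-1}$ as $\ktoo$, which follows from \eqref{l6f} in \refL{L6}, together with the uniform bound $\E|f(\ctk)|^p\le2k^{p-1}$ from \eqref{osiris}. The last (``diagonal'') term then already satisfies $\E|f(\ctn)|^p\sim2\ga^p n^{p-1}$. For the sum, write $\E|f(\ctk)|^p=2\ga^p k^{p-1}+r_k$ with $r_k=o(k^{p-1})$ and $|r_k|\le Ck^{p-1}$ for every $k$; since $p>2$ one has $p-3>-1$, so $\sumkni k^{p-3}\sim n^{p-2}/(p-2)$, splitting $\sum_k r_k/\bigpar{(k+1)(k+2)}$ at a large fixed cutoff (beyond which $r_k/k^{p-1}$ is uniformly small) shows that contribution is $o(n^{p-2})$, and replacing $(k+1)(k+2)$ by $k^2$ in the main part costs only $O\bigpar{\sumkni k^{p-4}}=o(n^{p-2})$ for every $p>2$. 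Thus the sum equals $\bigpar{1+o(1)}\,4\ga^p n^{p-2}/(p-2)$, which after multiplication by $n+1$ contributes $\sim 4\ga^p n^{p-1}/(p-2)$. Adding the diagonal term,
\begin{equation*}
\E\sumvtn|f(\ctnv)|^p\sim\Bigpar{\frac{4}{p-2}+2}\ga^p n^{p-1}=\frac{2p}{p-2}\,\ga^p n^{p-1}.
\end{equation*}

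The point to watch — essentially the only obstacle — is that the boundary term $\E|f(\ctn)|^p$ has the same order $n^{p-1}$ as the whole sum and supplies exactly the gap between $4/(p-2)$ and $2p/(p-2)$; this is the analytic counterpart of the heuristic recalled before \refT{Tp}, that one exceptionally large clade dominates the $p$-th moment. Everything else is a routine estimate of sums of the form $\sum_k k^{p-3}$.
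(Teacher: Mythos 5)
Your proof is correct and follows essentially the same route as the paper's: both apply \cite[Theorem 3.4]{SJ296} to rewrite the expectation as $(n+1)\sumkni\pkk\E|f(\ctk)|^p+\E|f(\ctn)|^p$, insert the asymptotic $\E|f(\ctk)|^p\sim 2\ga^p k^{p-1}$ (your citation of \eqref{l6f} is equivalent to the paper's use of \eqref{halma}, since $\norm{f(\ctk)}_p^p=\E|f(\ctk)|^p$), and evaluate the resulting $\sum k^{p-3}$ plus the order-$n^{p-1}$ diagonal term. Your explicit $r_k$-tail-splitting just spells out a standard step the paper leaves implicit; otherwise the two arguments coincide.
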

\begin{proof}
 We apply again \cite[Theorem 3.4]{SJ296} and obtain
  \begin{equation}
	\begin{split}
\E \sum_{v\in\ctn}\abs{f(\ctnv)}^p= 
(n+1)\sumkni\pkk\E|f(\ctk)|^p+\E|f(\ctn)|^p.
	\end{split}
  \end{equation}
By \eqref{halma},
\begin{equation}
  \pkk\E|f(\ctk)|^p \sim \frac{2}{k^2}\cdot 2\ga^pk^{p-1}=4\ga^p k^{p-3}
\end{equation}
as \ktoo, and it follows that, as \ntoo, using $p>2$,
  \begin{equation*}
	\begin{split}
\E \sum_{v\in\ctn}\abs{f(\ctnv)}^p
&\sim
(n+1)\sumkni 4\ga^p k^{p-3}+2\ga^p n^{p-1}
\\&
\sim n\frac{4\ga^p}{p-2} n^{p-2}+2\ga^p n^{p-1}
= \frac{2p}{p-2}\ga^p n^{p-1}.
\qedhere
	\end{split}
  \end{equation*}
\end{proof}

Next we take again some $m\ge1$ and use the notation in the proof of
\refL{Lxp}. Since we now have proved \eqref{lxp}, the proof of \refL{Lxp}
shows that \eqref{balder} holds for every $n$, and thus, since $p>2$,
\begin{equation}
  \begin{split}
  \norm{Z}_p 
&\le \CC (2/p)^{m/p}n\qpp+O\bigpar{(n\log n)\qq}
\\&
= \CCx (2/p)^{m/p}n\qpp+o\bigpar{n\qpp}.	
  \end{split}
\end{equation}
Consequently, by \eqref{osis} and Minkowski's inequality,
\begin{equation}\label{this}
\bigabs{ \norm{F(\ctn)-\nu_n}_p - \norm{Y}_p}
\le \norm{Z}_p+O(2^m)
= \CCx (2/p)^{m/p}n\qpp+o\bigpar{n\qpp}.
\end{equation}
In particular, \eqref{this} and \eqref{lxp2} imply 
$ \norm{Y}_p= O(n\qpp)$.
By the mean value theorem,
\begin{equation}
  \label{mean}
|x^p-y^p|\le p|x-y|\max\set{x^{p-1},y^{p-1}}
\end{equation}
for any $x,y\ge0$; hence \eqref{this} implies, using also \eqref{lxp2} again,
\begin{equation}\label{erika}
  \begin{split}
\E| F(\ctn)-\nu_n|^p - \E |Y|^p
=O\bigpar{(2/p)^{m/p}n^{p-1}}+o\bigpar{n^{p-1}}.
  \end{split}
\end{equation}

Let $\gd>0$ be a small positive number to be chosen later, and 
let $J_v$ be the indicator of the event that $v$ is green and
$|\ctnv|\ge\gd n$. 
(The idea is that the significant contributions only come from nodes $v$
with $J_v=1$.)
\begin{lemma}\label{Ljm}
For each fixed $m\ge1$ and $\gd>0$, and all $n\ge1$,
\begin{align}
  \PP\Bigpar{\sumvv J_v \ge 1}& 
\le 2^{m+1}\gd\qw n\qw
= O\bigpar{n\qw}
,\label{ljm1}
\\
\PP\Bigpar{\sumvv J_v \ge 2}& 
\le 2^{2m+1}\gd\qww n\qww
= O\bigpar{n\qww}
.\label{ljm2}
\end{align}
\end{lemma}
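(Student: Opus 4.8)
The plan is to derive \eqref{ljm1} from a first-moment (Markov) bound and \eqref{ljm2} from a second-moment bound, both resting on the elementary fact that a node $v$ with $|\ctnv|=k\ge1$ is green with conditional probability at most $2/k$. This is immediate from the recursive split description in \refR{Rsplit}: given $|\ctnv|=k$, the subtree $\ctnv$ is distributed as $\cT_k$, and $v$ is green (has at most one child) exactly when the split at $v$ is $(0,k-1)$ or $(k-1,0)$, which for the uniform split has probability $2/k$ if $k\ge2$ and $1$ if $k=1$. Hence, for every fixed $v$ and every $n\ge1$,
\begin{equation*}
  \E J_v=\PP\bigpar{v\text{ green},\ |\ctnv|\ge\gd n}
  \le\sum_{k\ge\gd n}\PP\bigpar{|\ctnv|=k}\,\frac2k\le\frac{2}{\gd n}.
\end{equation*}
Summing over the $2^m-1$ nodes of $V_m'$ and applying Markov's inequality gives $\PP\bigpar{\sumvv J_v\ge1}\le\E\sumvv J_v\le(2^m-1)\tfrac2{\gd n}\le2^{m+1}\gd\qw n\qw$, which is \eqref{ljm1}.

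For \eqref{ljm2} I would use $\PP\bigpar{\sumvv J_v\ge2}\le\E\binom{\sumvv J_v}{2}=\sum_{\{u,v\}}\E(J_uJ_v)$, summed over the $\binom{2^m-1}{2}<2^{2m-1}$ unordered pairs of distinct nodes of $V_m'$; it then suffices to prove $\E(J_uJ_v)\le4\gd\qww n\qww$ for each such pair. If $u$ and $v$ are incomparable in $\Too$, the subtrees $\ctnx u$ and $\ctnx v$ are disjoint, and by the recursive construction they are, conditionally on their sizes, independent binary search trees; the conditional probability that both $u$ and $v$ are green is then at most $\frac2{|\ctnx u|}\cdot\frac2{|\ctnx v|}$, and summing over the event that $|\ctnx u|,|\ctnx v|\ge\gd n$ yields $\E(J_uJ_v)\le4/(\gd n)^2$. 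If instead $u$ is a proper ancestor of $v$ (the symmetric case being identical), note that when $u$ is green it has a single non-empty child subtree $\ctnx c$; if $v$ is not a node of $\ctnx c$ then $\ctnv=\emptyset$, so on $\set{J_uJ_v=1}$ we have $\emptyset\neq\ctnv\subseteq\ctnx c$. Conditioning first on $|\ctnx u|=a$: the event that $u$ is green and $|\ctnx u|\ge\gd n$ has probability at most $2/(\gd n)$ exactly as above; conditioning further on $u$ being green, $\ctnx c$ is a random binary search tree of size $a-1$ in which $v$ occupies a fixed position (or lies outside it), so the additional conditional probability that $v$ is green with $|\ctnv|\ge\gd n$ is again at most $2/(\gd n)$ by the same computation. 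Multiplying these two estimates gives $\E(J_uJ_v)\le4/(\gd n)^2$ in this case too. Altogether $\PP\bigpar{\sumvv J_v\ge2}\le2^{2m-1}\cdot4\gd\qww n\qww=2^{2m+1}\gd\qww n\qww$, which is \eqref{ljm2}; since $m$ and $\gd$ are fixed, the prefactors are constants, giving the $O(n\qw)$ and $O(n\qww)$ forms.

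The one point needing a little care is the ancestor--descendant case: one must use that being green is a property of $\ctnv$ alone, so that after conditioning on the part of $\ctn$ down to and including the split at $u$ --- which makes $\ctnx c$ a genuine random binary search tree of the conditioned size --- the basic $2/k$ estimate applies to $v$ inside it. Everything else is a routine application of Markov's inequality together with the recursive description of $\ctn$ recalled in \refR{Rsplit}.
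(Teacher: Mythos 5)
Your proof is correct and takes essentially the same first- and second-moment approach as the paper: a Markov bound based on $\E J_v\le 2/(\gd n)$ gives \eqref{ljm1}, and the pairwise bound $\E(J_uJ_v)\le 4/(\gd n)^2$ summed over the fewer than $2^{2m-1}$ pairs gives \eqref{ljm2}. The only real difference is in how the pairwise bound is obtained. You split into two cases according to whether $u$ and $v$ are comparable in $\Too$, treating incomparable pairs via conditional independence of disjoint subtrees and nested pairs by conditioning sequentially on the split at the ancestor (and correctly noting that $v$ might lie under the empty child). The paper instead reuses the breadth-first filtration $(\cF_j)$ already set up in \refS{SCLT}: for $i<j$ the indicator $J_{v_i}$ is $\cF_{j-1}$-measurable while $\PP(J_{v_j}=1\mid\cF_{j-1})\le 2/(\gd n)$, so $\E(J_{v_i}J_{v_j}\mid\cF_{j-1})\le \tfrac{2}{\gd n}J_{v_i}$ and one expectation finishes it. That treats both kinds of pairs in a single stroke, avoiding your case analysis; your version is a bit longer but also self-contained and does not require the filtration machinery.
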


\begin{proof}
We use again the $\gs$-fields $\cF_j$ from \refS{SCLT}. Since $\cF_{j-1}$
specifies $|\ctnx{v_j}|$, but not how this subtree is split at $v_j$, we
have 
\begin{equation}\label{jm}
  \PP(J_{v_j}=1\mid \cF_{j-1}) \le \frac{2}{|\ctnx{v_j}|}
  \ett{|\ctnx{v_j}|\ge\gd n} \le \frac{2}{\gd n},
\end{equation}
and thus, by taking the expectation,
$\PP(J_{v_j}=1)\le 2/(\gd n)$.  Since there are $<2^m$ nodes in
$V_m'$, \eqref{ljm1} follows.

Furthermore, for any two nodes $v_i$ and $v_j$ with $i<j$, 
$J_{v_i}$ is determined by $\cF_{j-1}$, and \eqref{jm} thus gives also
\begin{equation}
  \PP(J_{v_i}J_{v_j}=1\mid \cF_{j-1}) 
=  \E(J_{v_i}J_{v_j}\mid \cF_{j-1}) 
=J_{v_i} \PP(J_{v_j}=1\mid \cF_{j-1}) 
\le \frac{2}{\gd n}J_{v_i}.
\end{equation}
 Thus, by taking the expectation and using \eqref{jm} again, 
$  \PP(J_{v_i}J_{v_j}=1) \le 4/(\gd n)^2$. Summing over the less than
$\binom{2^m}2<2^{2m-1}$ pairs $(v_i,v_j)$ with $v_i,v_j\in V_m'$ yields 
\eqref{ljm2}.
\end{proof}

\begin{proof}[Proof of \refT{Tp}]
We show this in several steps.

\stepx
Define
\begin{equation}\label{y1}
  Y_1:=\sum_{v\in V_m'} J_vf(\ctnv).
\end{equation}
Since $f(\ctnv)=0$ unless $v$ is green, we have
\begin{equation}\label{regn}
  Y-Y_1=\sum_{v\in V_m'} (1-J_v)f(\ctnv)
=\sum_{v\in V_m'} f(\ctnv)\ett{|\ctnv|<\gd n}.
\end{equation}
For each $v$, it follows from \eqref{osiris}
by conditioning on $|\ctnv|$ that
\begin{equation}\label{mowgli}
  \E\bigabs{f(\ctnv)\ett{|\ctnv|<\gd n}}^p\le 2{(\gd n)^{p-1}}.
\end{equation}
Hence, \eqref{regn} and Minkowski's inequality yield
\begin{equation}\label{sno}
  \begin{split}
\bigabs{\norm{Y}_p-\norm{Y_1}_p}
&\le
\norm{Y-Y_1}_p
\le\sum_{v\in V_m'}\norm{ f(\ctnv)\ett{|\ctnv|<\gd n}}_p
\\&
\le {2^{m+1/p} (\gd n)\qpp}.	
  \end{split}
\end{equation}
Thus $\normp{Y_1}=O(n\qpp)+O(2^m\gd\qpp n\qpp)$, and \eqref{mean} yields
\begin{equation}\label{magnus}
  \E |Y|^p -\E |Y_1|^p = O\bigpar{(2^m\gd\qpp+2^{mp}\gd^{p-1})n^{p-1}}.
\end{equation}

\stepx
Similarly, using \eqref{mowgli} again,
\begin{equation}\label{jesper}
  \begin{split}
\E\Bigpar{\sum_{v\in V_m'} |f(\ctnv)|^p
-\sum_{v\in V_m'} J_v|f(\ctnv)|^p}
&
=\sum_{v\in V_m'}\E \bigpar{|f(\ctnv)|^p\ett{|\ctnv|<\gd n}}
\\&
\le{ 2^{m+1} (\gd n)^{p-1}}.
  \end{split}
\raisetag{\baselineskip}
\end{equation}

\stepx
  By \eqref{y1}, 
$|Y_1|^p -  \sumvv|J_vf(\ctnv)|^p =0$ unless $\sumvv J_v\ge2$,
and in the latter case we have by \eqref{maa}  the trivial bounds
$|Y_1|^p\le (2^m n)^p$ 
and $\sumvv|J_vf(\ctnv)|^p \le 2^m n^p$, and thus 
$\bigabs{|Y_1|^p -  \sumvv|J_vf(\ctnv)|^p}\le 2^{mp}n^p$.
Consequently, by \eqref{ljm2}, 
\begin{equation} 
\E \Bigabs{|Y_1|^p - \sumvv|J_vf(\ctnv)|^p}
\le 2^{mp}n^p \PP\Bigpar{\sumvv  J_v\ge2}
=O(n^{p-2}).
\end{equation}
Thus,
for fixed $m\ge1$ and $\gd>0$,
\begin{equation}\label{emma}
  \E|Y_1|^p - \sumvv\E |J_vf(\ctnv)|^p 
= O\bigpar{n^{p-2}}
= o\bigpar{n^{p-1}}.
\end{equation}

\stepx
Define $\FF(T):=\sum_{v\in T}|f(T_v)|^p$. Then, in analogy with \eqref{xa}, 
\begin{equation}\label{xap}
\FF(T)=\sum_{v\in V'_m} |f(T_v)|^p + \sum_{v\in \dV'_m} \FF(T_v).
\end{equation}
Note that \refL{Lbb} implies $\E\FF(\ctn)=O(n^{p-1})$.
Hence, by first conditioning on $\cF'_m$, and using \eqref{scott},
\begin{equation}
  \begin{split}
\E \sum_{v\in \dV'_m} \FF(\ctnv) \le \CC \E \sumvdv|\ctnv|^{p-1}
=\CCx (2/p)^m n^{p-1}.	
  \end{split}
\end{equation}
Taking $T=\ctn$ in \eqref{xap} and taking the expectation, we thus find
\begin{equation}\label{sofie}
  \begin{split}
\E \sum_{v\in \ctn}|f(\ctnv)|^p	
- \E \sumvv|f(\ctnv)|^p	
= O\bigpar{(2/p)^m n^{p-1}}.
  \end{split}
\end{equation}

\stepx
Finally,
combining \eqref{erika}, \eqref{magnus}, \eqref{emma}, \eqref{jesper},
\eqref{sofie} and \eqref{lbb}, we obtain
\begin{equation}
  \begin{split}
\E| F(\ctn)-\nu_n|^p 
&=
 \frac{2p}{p-2}\ga^p n^{p-1}
+ O\bigpar{(2/p)^{m/p}n^{p-1}}
+O\bigpar{2^m\gd\qpp n^{p-1}}
\\&
\qquad
+O\bigpar{2^{mp}\gd^{p-1}n^{p-1}}
+o(n^{p-1}).
  \end{split}
\raisetag{\baselineskip}
\end{equation}
For any $\eps>0$, we can make each of the error terms on the \rhs{} less than
$\eps n^{p-1}$ by first choosing $m$ large and then $\gd$ small, and finally
$n$ large.
Consequently,
$\E| F(\ctn)-\nu_n|^p = \frac{2p}{p-2}\ga^p n^{p-1}+o(n^{p-1})$.
\end{proof}

\begin{proof}[Proof of \eqref{Xmom}]
 Now $p=k$ is an integer.
If $k$ is even, then \eqref{Xmom} is the same as \eqref{tp}, so we may
assume that $p=k\ge3$ is odd.

In this case, \eqref{mean} holds for all real $x,y$. 
 Thus for any random
variables $X$ and $Y$, using also \Holder's inequality,
  \begin{equation}
	\begin{split}
\E|X^p-Y^p|
&\le p\E\bigpar{|X-Y|\,|X|^{p-1}+|X-Y|\,|Y|^{p-1}}
\\&
\le p \normp{X-Y}\bigpar{\normp{X}^{p-1}+\normp{Y}^{p-1}}.
	\end{split}
  \end{equation}
It is now easy to modify the proof of \refT{Tp} and obtain
\begin{equation}\label{scar}
\E\bigpar{F(\ctn)-\nu_n}^p 
=
  \E \sumvtn{f(\ctnv)}^p + o\bigpar{n^{p-1}}.
\end{equation}
Furthermore, it follows from \eqref{f} that $f(T)\le0$ unless $|T|=1$.
Hence,
\begin{equation}\label{lett}
 \sumvtn{f(\ctnv)}^p =
-\sumvtn\abs{f(\ctnv)}^p + O(n).
\end{equation}

The estimate \eqref{Xmom} now follows from \eqref{scar}, \eqref{lett} and
\eqref{lbb}. 
\end{proof}

\section{Proof of \refL{Lga}}\label{Sga}

Define a \emph{chain} of length $k$ in a (binary) tree $T$
to be a sequence of $k$ nodes $v_1\dotsm v_k$ such that $v_{i+1}$ is a (strict)
descendant of $v_i$ for each $i=1,\dots,k-1$. In other words, $v_1,\dots,v_k$ are some
nodes (in order) on some path from the root. We say that the chain
$v_1\dotsm v_k$ is \emph{green} if
all nodes $v_1,\dots,v_k$ are green. (The nodes between the $v_i$'s may have
any colour.)

For a binary tree $T$ and $k\ge1$, 
let $F_k(T)$ be the number of green chains $v_1\dotsm v_k$ in $T$, 
and let $f_k(T)$
be the number of such chains where $v_1$ is the root.
Obviously, \cf{} \eqref{ftv},
\begin{equation}\label{fktv}
  F_k(T)=\sum_{v\in T} f_k(T_v).
\end{equation}
These functionals are useful to us because of the following simple
relations, that are cases of  inclusion-exclusion. 
\begin{lemma}\label{L2}
  For any binary tree $T$,
  \begin{align}
	f(T)&=\sumki (-1)^{k-1} f_k(T),\label{fksum}
\\
	F(T)&=\sumki (-1)^{k-1} F_k(T). \label{Fksum}
  \end{align}
\end{lemma}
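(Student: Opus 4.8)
The plan is to prove \eqref{Fksum} first, by a direct inclusion--exclusion count, and then to deduce \eqref{fksum} from it by splitting off the root. Note at the outset that every sum occurring is finite: a binary tree $T$ has no chain of length $k>|T|$, so $f_k(T)=F_k(T)=0$ for $k>|T|$; in particular both identities hold trivially for $T=\emptyset$ (all terms vanish). Recall also that, for a binary tree $T$, $F(T)$ is the number of green nodes of $T$ having no green (strict) ancestor.

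For \eqref{Fksum}, fix a green node $v$ of $T$. Its green strict ancestors all lie on the single path from the root of $T$ to $v$, hence form a chain; denote this set by $A_v$, so that $v$ is one of the nodes counted by $F(T)$ precisely when $A_v=\emptyset$. Using the elementary identity $\sum_{S\subseteq A_v}(-1)^{|S|}=\ett{A_v=\emptyset}$ and summing over $v$,
\[
F(T)=\sum_{v\text{ green}}\ \sum_{S\subseteq A_v}(-1)^{|S|}.
\]
A pair $(v,S)$ with $v$ green and $S\subseteq A_v$ is the same datum as a green chain $v_1\dotsm v_k$ with bottom node $v_k=v$ and $\{v_1,\dots,v_{k-1}\}=S$: here $k=|S|+1$, and the order of $v_1,\dots,v_{k-1}$ is forced since these nodes are pairwise comparable. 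As $(-1)^{|S|}=(-1)^{k-1}$, the double sum equals $\sumki(-1)^{k-1}F_k(T)$, which is \eqref{Fksum}. (Equivalently, classifying green chains of length $k$ by their bottom node gives $F_k(T)=\sum_{v\text{ green}}\binom{|A_v|}{k-1}$, after which one sums the alternating series in $k$.)

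For \eqref{fksum}, by \eqref{f}, if the root of $T$ is not green — that is, $T=\emptyset$ or both $\tl$ and $\tr$ are nonempty — then $f(T)=0$, and $f_k(T)=0$ for all $k\ge1$ since the first node of any chain counted by $f_k$ must be the (green) root; so both sides vanish. Otherwise the root is green and one of $\tl,\tr$ is empty; say $\tl=\emptyset$ and $T\neq\emptyset$, the other case being symmetric, so that $f(T)=1-F(\tr)$ by \eqref{f}. Since the root has no left child, every node other than the root in a chain counted by $f_k(T)$ lies in $\tr$; and since greenness of a node depends only on its outdegree, which is the same whether the node is viewed in $\tr$ or in $T$, such a chain with its first node deleted is exactly a green chain of $\tr$. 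Hence $f_1(T)=1$ and $f_k(T)=F_{k-1}(\tr)$ for $k\ge2$, so
\[
\sumki(-1)^{k-1}f_k(T)=1-\sum_{j\ge1}(-1)^{j-1}F_j(\tr)=1-F(\tr)=f(T)
\]
by \eqref{Fksum} applied to $\tr$.

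The argument is essentially bookkeeping, so there is no real obstacle; the points that deserve a moment's care are that greenness of a node is intrinsic (independent of the ambient tree), that the order within a chain is determined by the ancestor relation, and the degenerate cases $T=\emptyset$ and $|T|=1$ (the latter is in fact subsumed by the case $\tl=\emptyset$, with $F_{k-1}(\tr)=0$). If one prefers to avoid the case analysis in the second part, one can instead deduce \eqref{fksum} from \eqref{Fksum} by induction on $|T|$: combining \eqref{Fksum} with \eqref{ftv} and \eqref{fktv} gives $\sum_{v\in T}f(T_v)=F(T)=\sum_{v\in T}\bigpar{\sumki(-1)^{k-1}f_k(T_v)}$ for every $T$, and comparing this identity for $T$ with the same identity for $\tl$ and $\tr$ forces $f(T)=\sumki(-1)^{k-1}f_k(T)$.
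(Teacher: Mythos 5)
Your proof of \eqref{Fksum} is essentially the paper's: both classify green chains by their bottom node $v$ and observe that the alternating sum over choices of green ancestors of $v$ collapses to $\ett{A_v=\emptyset}$ via $\sum_{i}\binom{j}{i}(-1)^i=(1-1)^j$, and it is correct.

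For \eqref{fksum} you take a mildly different route. The paper proves it by a second direct inclusion--exclusion: assuming the root $o$ is green, the chain $\{o\}$ contributes $1$, and for each fixed bottom node $v\neq o$ the chains from $o$ to $v$ contribute $-1$ precisely when $v$ is the first green node below $o$ (and $0$ otherwise), which matches the definition \eqref{f}. You instead split each chain at the root: with $\tl=\emptyset$ one has $f_1(T)=1$ and $f_k(T)=F_{k-1}(\tr)$ for $k\ge2$, after which \eqref{fksum} falls out of \eqref{Fksum} applied to $\tr$. Both arguments are correct and about equally short; yours has the advantage of reusing \eqref{Fksum} rather than running inclusion--exclusion twice, at the modest cost of a case split on $\tl,\tr$. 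Your closing remark about deducing \eqref{fksum} from \eqref{Fksum}, \eqref{ftv} and \eqref{fktv} by induction on $|T|$ is exactly the paper's parenthetical alternative.

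One small point worth making explicit, which you correctly rely on but could flag more prominently: the identity $f_k(T)=F_{k-1}(\tr)$ uses that greenness of a node $w\in\tr$ is the same whether $w$ is viewed as a node of $T$ or of $\tr$ (greenness depends only on outdegree, and the subtree below $w$ is unchanged). You do say this; it is the one place where something could silently go wrong if ``green'' were not a local property of the fringe subtree.
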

\begin{proof}
Let $v$ be a node in $T$ and consider the contribution to the sum in 
\eqref{Fksum}
of all chains with final node $v_k=v$. This is clearly 0 if 1 if $v$ is not
green, and it is 1 if $v$ is a maximal green node; furthermore, if $v$ is
green but has $j\ge1$ green ancestors, then the contribtion is easily seen
to be $\sum_{i=0}^j\binom ji(-1)^i=(1-1)^j=0$. Hence the \rhs{} of \eqref{Fksum} is the number of
maximal green nodes, \ie, $F(T)$.

For \eqref{fksum} we can argue similarly: Both sides are 0 unless the root
$o$ is green. If it is, the chain $o$ gives contribution 1, and by 
inclusion-exclusion, the chains with a given final node $v\neq o$ yield
together a ycontribution $-1$ 
if $v$ is green and there are no green nodes between $v$ and $o$, and 0
otherwise. Hence the sum equals $f(T)$ by \eqref{f}.
(Alternatively, \eqref{fksum} follows by induction from \eqref{Fksum},
\eqref{ftv} and \eqref{fktv}.)
\end{proof}

\begin{lemma}\label{L3} 
For every $k\ge1$,
  \begin{equation}
  \E f_k(\cT)=\frac{k(k+3)}{(k+1)(k+2)}\cdot\frac{2^{k-1}}{k!}
= \frac{2^{k-1}}{k!} - \frac{2^{k}}{(k+2)!}.	
  \end{equation}
\end{lemma}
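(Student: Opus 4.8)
The plan is to compute $\E f_k(\cT)$ by first finding $\E f_k(\ctn)$ for the fixed-size binary search tree $\ctn$ and then summing against the weights $\PP(|\cT|=n)=2/((n+1)(n+2))$, exactly as in \eqref{lefa}. For this, introduce $\mu_{k,n}:=\E f_k(\ctn)$, the expected number of green chains of length $k$ starting at the root. A chain of length $k$ with $v_1=o$ exists only if the root $o$ is green, which (for $n\ge 2$) happens exactly when one of the two root subtrees is empty, each with probability $1/n$; conditioned on $\ctnl=\emptyset$ we have $\ctnr\eqd\cT_{n-1}$, and a green chain starting at $o$ is then either the single chain $(o)$ (if $k=1$) or $o$ followed by a green chain of length $k-1$ starting anywhere in $\ctnr$, i.e.\ counted by $F_{k-1}(\ctnr)$. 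This gives, for $n\ge2$ and $k\ge2$, a recursion $\mu_{k,n}=\tfrac2n\,\E F_{k-1}(\cT_{n-1})$, while $\mu_{1,n}=\tfrac2n$; combined with the additive identity \eqref{fktv} and \cite[Theorem 3.4]{SJ296} (as used for $F$ in \eqref{jw}), this lets one solve recursively for $\E F_k(\ctn)$ and hence for $\mu_{k,n}$.

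An alternative, and probably cleaner, route is to work directly with the branching-process construction of $\cT$ from \refS{Sbin}: $\cT=\ccT_\tau$ where $\ccT_t$ is the pure-birth tree in which every node acquires a left and a right child after independent $\Exp(1)$ delays, stopped at an independent $\Exp(1)$ time $\tau$. A node $v$ present at time $\tau$ is green iff at most one of its two children has appeared by time $\tau$. One can then express $f_k(\cT)$ as a sum over increasing chains of nodes in $\Too$, each weighted by the indicator that all of them have appeared and are green at time $\tau$, and take expectations by integrating over $\tau$ and over the birth times along the relevant path. Because the delays are i.i.d.\ $\Exp(1)$, the birth time of a node at depth $d$ is a sum of $d$ independent exponentials (a Gamma variable), and greenness of each chain node contributes an explicit factor; the whole expectation becomes a finite product/integral that should evaluate to a rational combination of the stated form. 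The factor $2^{k-1}/k!$ strongly suggests that the dominant combinatorial count is the number of ways to realize $k$ green chain nodes along a root path together with a factor $2$ per node for the choice of which child is "missing" or which side the chain descends, with the $1/k!$ coming from the ordering of $k$ i.i.d.\ exponential birth increments; the correction $\tfrac{k(k+3)}{(k+1)(k+2)}=1-\tfrac{2}{(k+1)(k+2)}$ presumably records the boundary events where the stopping time $\tau$ cuts off a child birth (this is the same $2/((k+1)(k+2))$ weight as in the size distribution of $\cT$).

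I would carry it out as follows: (1) set up the recursion for $\mu_{k,n}=\E f_k(\ctn)$ as above, using that the root is green precisely when a root subtree is empty; (2) apply \cite[Theorem 3.4]{SJ296} to \eqref{fktv} to get $\E F_k(\ctn)$ from $\mu_{k,n}$, proceeding by induction on $k$ (the base case $k=1$ is essentially $F_1=F$, already handled by \refL{LEF}); (3) plug the resulting formula for $\mu_{k,n}$ into $\E f_k(\cT)=\sum_{n\ge1}\frac{2}{(n+1)(n+2)}\mu_{k,n}$ and evaluate the sum in closed form; (4) simplify to $\tfrac{k(k+3)}{(k+1)(k+2)}\cdot\tfrac{2^{k-1}}{k!}$ and check that this equals $\tfrac{2^{k-1}}{k!}-\tfrac{2^{k}}{(k+2)!}$ by a one-line partial-fraction identity. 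The main obstacle I anticipate is step (2)–(3): carrying the $k$-dependence through the generating-function/recursion bookkeeping cleanly enough that the final sum telescopes or resums to the claimed rational$\times 2^{k-1}/k!$ form, rather than leaving an unilluminating nested sum; if the direct recursion gets messy I would fall back on the branching-process integral, where the product structure of i.i.d.\ exponentials makes the $2^{k-1}/k!$ and the $(k+1)(k+2)$ denominator appear more transparently.
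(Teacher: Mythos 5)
Your proposal is a plan, not a proof: neither of your two routes is actually carried out, and a careful reader cannot extract the stated formula from what you've written. The paper's proof is your second route (the branching-process construction with $\Exp(1)$ clocks), but made precise in a way your sketch leaves genuinely open. Specifically, the paper fixes a candidate chain $\vvk$ in $\Too$ with $\ell_i$ intermediate nodes between $v_i$ and $v_{i+1}$, and computes the exact probability that this chain is green in $\cT=\cttau$ by a sequence of ``which clock chimes first'' arguments: at depth between $v_i$ and $v_{i+1}$ one competes the next downward clock against $i+1$ competing clocks ($\Cr(v_1),\dots,\Cr(v_i)$, plus $C_0$), giving a factor $(i+2)^{-(\ell_i+1)}$, and a final step handles the doomsday clock and $v_k$'s children, giving $\frac{k+3}{(k+1)(k+2)}$. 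After multiplying by the $\prod_i 2^{\ell_i+1}$ choices of chain with these gaps and resumming the geometric series $\sum_{\ell_i\ge0}(2/(i+2))^{\ell_i+1}=2/i$, the product collapses to $\frac{2^{k-1}}{(k-1)!}$. Your intuition that ``$1/k!$ comes from ordering $k$ i.i.d.\ exponential increments'' is not the actual mechanism; the $1/(k-1)!$ arises from $\prod_{i=1}^{k-1}1/i$ after the geometric resummation, and the boundary factor is a genuine $(k+3)/((k+1)(k+2))$ from the last step, not the weight $2/((k+1)(k+2))$ you guessed. Without these steps, the claim remains unproved.

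Your first route (computing $\mu_{k,n}=\E f_k(\ctn)$ recursively via $\mu_{k,n}=\tfrac2n\,\E F_{k-1}(\cT_{n-1})$ for $k\ge2$ and then summing against $\PP(|\cT|=n)$) is a genuinely different approach from the paper and is in principle viable; I verified that it gives the correct answer for $k=1,2$. However, it contains a concrete error you should fix: you identify $F_1$ with $F$ and appeal to \refL{LEF}. That is wrong --- $F_1(T)$ counts \emph{all} green nodes (chains of length $1$), whereas $F(T)$ counts \emph{maximal} green nodes; these differ, and $\E F_1(\ctn)=\tfrac{2(n+1)}{3}$ for $n\ge2$, which is not $\nu_n$. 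The base case must be handled separately, and then the induction on $k$ still requires you to produce a closed form for $\E F_k(\ctn)$ at each step, which is precisely the bookkeeping you flag as the ``main obstacle'' but do not resolve. So both routes are left at the level of a sketch, and route~1 as written also has an incorrect identification in its base case.
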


\begin{proof}
We use the construction of $\cT=\cttau$ in \refS{Sbin}, which we formulate as
follows. Consider again the infinite binary tree $\Too$,
and grow $\ctt$ as a subtree of $\Too$, \cf{} \refS{SCLT}. 
To do this, we
equip each node $v$ in $\Too$ with two clocks $\Cl(v)$ and
$\Cr(v)$. These are started when $v$ is added to the growing tree $\ctt$,
and each chimes after a random time with an exponential distribution with
mean 1; when the clock chimes we add a left or right child, respectively, to
$v$. 
There is also a \emph{doomsday clock} $C_0$, started at 0 and with the same
$\Exp(1)$ distribution; when it chimes (at time $\tau$),
the process is stopped and the tree $\cttau$ is output. All clocks are
independent of each other.

  Fix a chain $\vvk$ in the infinite tree $\Too$, with $v_1=o$, the root.
Let $\ell_i\ge0$ be the number of nodes between $v_i$ and $v_{i+1}$.
We compute the probability that $\vvk$ is a green chain in $\cT=\cttau$ 
by following the construction of $\ctt$ as time progresses,
checking in several steps
whether still $\vvk$ is a candidate for a green chain, and
computing the probability of this.
(We use throughout the proof the Markov property and the memoryless property
of the exponential distribution.)
We assume for notational convenience that the path from $v_1$ to $v_k$
always uses the left child of each node.
(By symmetry, this does not affect the result.) 

\step1
If $k>1$, we first need that $v_1=o$ has a left child but no right child (in
order to be green); in particular, of the three clocks
$\Cl(v_1)$, $\Cr(v_1)$, $C_0$ that run from the beginning, $\Cl(v_1)$ has to
chime first. This has probability $1/3$.

\step2
Given that Step 1 succeeds, 
$v_1$ gets a left child $w_1$. If $\ell_1>0$, we need a left
child of $w_1$, and still no right child at $v_1$. (But we do not care
whether we get a right child at $w_1$ or not.) Hence we need that 
$\Cl(w_1)$ chimes first among the
three clocks $\Cl(w_1)$, $\Cr(v_1)$, $C_0$ (ignoring all other clocks). 
This has probability $1/3$. 

This is repeated for $\ell_1$ nodes; thus, the total probability that steps 1
and 2 succeed is $3^{-(\ell_1+1)}$.

\step3
This takes us to $v_2$. If $k>2$, we need a left child but no right child at
$v_2$, and still no right child at $v_1$. Hence, the next chime from the
four clocks  $\Cl(v_2)$, $\Cr(v_2)$, $\Cr(v_1)$, $C_0$ has to come from
$\Cl(v_2)$. This has probability $1/4$. 

\step4
Similarly for each of the $\ell_2$ nodes between $v_2$ and $v_3$; again the
probability of success at each of these nodes is $1/4$. Hence the
probability that Steps 3 and 4 succeed is $4^{-(\ell_2+1)}$.

\step5
Steps 3 and 4 are repeated for $v_i$ for each $i<k$, yielding a probability
$(i+2)^{-(\ell_i+1)}$ of success for each $i$.

\step6
Finally, we have obtained $v_k$, and wait for the doomsday clock. Until it
chimes, we must not get any right child at $v_1,\dots,v_{k-1}$, and we must
get at most one child at $v_k$.
Hence, among the $k+2$ clocks $\Cr(v_1),\dots,\Cr(v_k)$, $\Cl(v_k)$, $C_0$,
the next chime must be either from $C_0$ (probability $1/(k+2)$), or from
$\Cl(v_k)$ or $\Cr(v_k)$, followed by $C_0$ (probability
$\frac{2}{k+2}\cdot\frac{1}{k+1})$. The probability of success in this step
is thus
\begin{equation}
\frac{1}{k+2}+  \frac{2}{k+2}\cdot\frac{1}{k+1}
=
 \frac{k+3}{(k+1)(k+2)}.
\end{equation}

Combining the six steps above, we see that the probability that $\vvk$ is a
green chain in $\cttau$ is
\begin{equation}
\frac{k+3}{(k+1)(k+2)}\prod_{i=1}^{k-1} \Bigparfrac{1}{i+2}^{\ell_i+1}.  
\end{equation}

Given $\ell_1,\dots,\ell_{k-1}$, there are $\prod_{i=1}^{k-1} 2^{\ell_i+1}$
choices of the chain $\vvk$, all with the same probability, so summing over
all $\ell_1,\dots,\ell_{k-1}\ge0$, we obtain
\begin{equation*}
  \begin{split}
\E f_k(\cT)
&=
\frac{k+3}{(k+1)(k+2)}\prod_{i=1}^{k-1} 
 \sum_{\ell_i=0}^\infty\Bigparfrac{2}{i+2}^{\ell_i+1}
=
\frac{k+3}{(k+1)(k+2)}\prod_{i=1}^{k-1} 
\frac{2}{i}
\\&
=
\frac{k+3}{(k+1)(k+2)}\cdot\frac{2^{k-1}}{(k-1)!}
=
\frac{k(k+3)}{(k+1)(k+2)}\cdot\frac{2^{k-1}}{k!}.
  \end{split}
\qedhere
\end{equation*}
\end{proof}

\begin{proof}[Proof of \refL{Lga}]
  By Lemmas \ref{L2} and \ref{L3}, and a simple calculation,
  \begin{equation*}
	\begin{split}
\E f(\cT) = \sumki (-1)^{k-1} \E f_k(\cT)	  
= \sumki\lrpar{\frac{(-2)^{k-1}}{k!} + \frac{(-2)^{k}}{(k+2)!}}
=\frac{1-e^{-2}}{4},
	\end{split}
\end{equation*}
noting that we may take the expectation inside the sum since
it also follows from \refL{L3} that
$\sumki  \E |f_k(\cT)| = \sumki  \E f_k(\cT)<\infty$.
\end{proof}

Recall that this, together with \refL{LEF}, completes our probabilistic
proof of \refT{Tmean}.

\begin{remark}
If we in the proof above change the doomsday clock and let it have an
arbitrary  rate  $\gl>0$, and denote the resulting random binary tree by
$\ctgl$, then the same argument yields
\begin{equation}
  \begin{split}
\E f_k(\ctgl)
&=
\frac{k+\gl+2}{(k+\gl)(k+\gl+1)}\prod_{i=1}^{k-1} 
 \sum_{\ell_i=0}^\infty\Bigparfrac{2}{i+\gl+1}^{\ell_i+1}
\\&
=
\frac{k+\gl+2}{(k+\gl)(k+\gl+1)}\prod_{i=1}^{k-1} 
\frac{2}{i+\gl-1}
\\&
=
\frac{(k+\gl-1)(k+\gl+2)}{(k+\gl)(k+\gl+1)}\frac{2^{k-1}}{\gl\rise k}
\\&
=\frac{2^{k-1}}{\gl\rise k}-\frac{2^{k}}{\gl\rise {k+2}}.
  \end{split}
\end{equation}
Thus by \refL{L2}, 
letting $\Fii$ denote the confluent hypergeometric function, see
\eg{} \cite[\S\S13.1--13.2 and 16.1--16.2]{NIST},
  \begin{equation}\label{swf}
	\begin{split}
\E f(\ctgl) 
&= \sumki (-1)^{k-1} \E f_k(\ctgl)
= \sumki\lrpar{\frac{(-2)^{k-1}}{\gl\rise k}+\frac{(-2)^{k}}{\gl\rise{k+2}}}
\\&
=-\frac12\bigpar{\Fii(1;\gl;-2)-1}
+\frac14\Bigpar{\Fii(1;\gl;-2)
-\Bigpar{1-\frac{2}{\gl}+\frac{2\cdot2}{\gl(\gl+1)}}}
\\&
=\frac{1}4+\frac{\gl-1}{2\gl(\gl+1)}-\frac14\Fii(1;\gl;-2).
	\end{split}
\raisetag\baselineskip
\end{equation}
Furthermore, if $\gl>1$ we can compute $\E F(\ctgl)$ by the same method; the
only difference is that we also allow a path of length $\ell_0\ge0$ from the
root 
to $v_1$, which gives an additional factor $(1+\gl)^{-\ell_0}$ for each
$\vvk$, leading to
\begin{equation}
  \E F_k(\ctgl)=\sum_{\ell_0=0}^\infty \parfrac{2}{\gl+1}^{\ell_0} \E f_k(\ctgl)
=\frac{\gl+1}{\gl-1}\E f_k(\ctgl),
\end{equation}
and hence, using both parts of \refL{L2},
\begin{equation}\label{swF}
  \E F(\ctgl)
=\sumki(-1)^{k-1}\E F_k(\ctgl)
=\frac{\gl+1}{\gl-1}\E f(\ctgl)
.
\end{equation}

Moreover, a simple argument shows that, for any $n\ge1$,
\begin{equation}
  \PP(|\ctgl|=n)=\prod_{i=2}^n\frac{i}{i+\gl}\cdot\frac{\gl}{n+1+\gl}
=\frac{\gl n!}{(2+\gl)\rise n},
\end{equation}
and conditioned on $|\ctgl|=n$, $\ctgl$ has the same distribution as $\ctn$,
\ie, 
$(\ctgl\mid|\ctgl|=n)\eqd\ctn$. Hence,
\begin{equation}
\E F(\ctgl) = \sumni \frac{\gl n!}{(2+\gl)\rise n} \nu_n,
\end{equation}
which can be interpreted as an unusual type of generating function for the
sequence $(\nu_n)$; note that \eqref{swF} and \eqref{swf} yield an explicit
expression for it.
\end{remark}

\newcommand\AAP{\emph{Adv. Appl. Probab.} }
\newcommand\JAP{\emph{J. Appl. Probab.} }
\newcommand\JAMS{\emph{J. \AMS} }
\newcommand\MAMS{\emph{Memoirs \AMS} }
\newcommand\PAMS{\emph{Proc. \AMS} }
\newcommand\TAMS{\emph{Trans. \AMS} }
\newcommand\AnnMS{\emph{Ann. Math. Statist.} }
\newcommand\AnnPr{\emph{Ann. Probab.} }
\newcommand\CPC{\emph{Combin. Probab. Comput.} }
\newcommand\JMAA{\emph{J. Math. Anal. Appl.} }
\newcommand\RSA{\emph{Random Struct. Alg.} }
\newcommand\ZW{\emph{Z. Wahrsch. Verw. Gebiete} }
\newcommand\DMTCS{\jour{Discr. Math. Theor. Comput. Sci.} }

\newcommand\AMS{Amer. Math. Soc.}
\newcommand\Springer{Springer-Verlag}
\newcommand\Wiley{Wiley}

\newcommand\vol{\textbf}
\newcommand\jour{\emph}
\newcommand\book{\emph}
\newcommand\inbook{\emph}
\def\no#1#2,{\unskip#2, no. #1,} 
\newcommand\toappear{\unskip, to appear}

\newcommand\arxiv[1]{\url{arXiv:#1}}
\newcommand\arXiv{\arxiv}

\end{document}